\tikzset{->-/.style={decoration={
  markings,
  mark=at position .525 with {\arrow{Straight Barb}}},postaction={decorate}}}
\tikzset{->>-/.style={decoration={
  markings,
  mark=at position 0.5 with {\arrow{Straight Barb}},
  mark=at position 0.6 with {\arrow{Straight Barb}}},postaction={decorate}}}
\newcommand{\R}{\mathbb{R}}
\newcommand{\C}{\mathbb{C}} 
\newcommand{\N}{\mathbb{N}}
\newcommand{\Z}{{\mathbb Z}}
\renewcommand{\S}{\mathscr{S}}
\newcommand{\I}{\mathcal{I}}
\renewcommand{\phi}{\varphi}
\newcommand{\Per}{\text{Per}}
\newcommand{\diam}{{\,\text{diam}}}
\newcommand{\interior}{\text{int}}
\newcommand{\lang}{\mathcal{L}}
\newcommand{\ev}{{\text{even}}}
\newcommand{\parens}[1]{\left( #1 \right)}
\newcommand{\word}[1]{\mathtt{#1}}
\newcommand{\ie}{\textit{i.e.}\ }
\newcommand{\lc}{\text{lc}}
\newcommand{\lcu}{\text{lcu}}
\newcommand{\lcs}{\text{lcs}}
\newcommand{\sync}{\text{sync}}
\theoremstyle{plain}
    \newtheorem{theorem}{Theorem}[section]
    \newtheorem{lemma}[theorem]{Lemma}
    \newtheorem{corollary}[theorem]{Corollary}
    \newtheorem{proposition}[theorem]{Proposition}
    \newtheorem{prop}[theorem]{Proposition}
\theoremstyle{definition}
    \newtheorem{definition}[theorem]{Definition}
    \newtheorem{example}[theorem]{Example}
    \newtheorem{remark}[theorem]{Remark}
\theoremstyle{remark}
\begin{document}

\title[Synchronizing Dynamical Systems]{Synchronizing Dynamical Systems: their groupoids and $C^*$-algebras}
\author{Robin J. Deeley}
\address{Robin J. Deeley,   Department of Mathematics,
University of Colorado Boulder
Campus Box 395,
Boulder, CO 80309-0395, USA }
\email{robin.deeley@colorado.edu}
\author{Andrew M. Stocker}
\address{Andrew M. Stocker,   Department of Mathematics,
University of Colorado Boulder
Campus Box 395,
Boulder, CO 80309-0395, USA }
\email{andrew.stocker@colorado.edu}
\subjclass[2010]{46L35, 37D20}
\thanks{Both RJD and AMS were partially supported by NSF Grant DMS 2000057.}
\begin{abstract}
Building on work of Ruelle and Putnam in the Smale space case, Thomsen defined the homoclinic and heteroclinic $C^\ast$-algebras for an expansive dynamical system.  In this paper we define a class of expansive dynamical systems, called synchronizing dynamical systems, that exhibit hyperbolic behavior almost everywhere. Synchronizing dynamical systems generalize Smale spaces (and even finitely presented systems). Yet they still have desirable dynamical properties such as having a dense set of periodic points. We study various $C^\ast$-algebras associated with a synchronizing dynamical system. Among other results, we show that the homoclinic algebra of a synchronizing system contains an ideal which behaves like the homoclinic algebra of a Smale space. 
\end{abstract}

\maketitle

\section*{Introduction}
The theories of $C^\ast$-algebras and dynamical systems are connected at a deep and fundamental level. $C^\ast$-algebras can be constructed from dynamical systems by associating a groupoid to the dynamical system and then forming the associated $C^\ast$-algebra(s). This allows for the construction of many interesting $C^\ast$-algebras. In addition, the $K$-theory of the associated $C^\ast$-algebra is an invariant of the original dynamical system and hence can be used to distinguish different dynamical systems.

One important example of this process is the use of $C^\ast$-algebras in the study of Smale spaces, an important class of dynamical systems. In the present paper, a dynamical system is a pair $(X, \varphi)$ where $X$ is a compact metric space and $\varphi: X \rightarrow X$ is a homeomorphism. A Smale space is a dynamical system that is hyperbolic in a uniform way; this is made precise using a map called the bracket. Ruelle \cite{Ruelle1988} and Putnam \cite{putnam_1996} introduced $C^\ast$-algebras associated to a (mixing) Smale space and based on the efforts of many people these $C^\ast$-algebras are at this point well-understood at least from the perspective of Elliott's classification program, see \cite{putnam99, MR4069199, MR3766855}. 

Based on the success in the Smale space case, it is natural to consider generalizations to larger classes of dynamical systems. There are (at least) two natural choices. Firstly, one can consider finitely presented systems \cite{fried1987}, which are quite close to Smale spaces in terms of their dynamical properties. Secondly, one could consider expansive dynamical systems. A dynamical system, $(X, \varphi)$, is expansive if there is a constant $\varepsilon_X > 0$, such that for any $x,y \in X$, \[ d(\varphi^n(x),\varphi^n(y)) \leq \varepsilon_X \text{ for all } n \in \mathbb{Z} \] implies $x = y$.

This definition is deceptively simple. On the one hand, the class of expansive dynamical systems is incredibly large and they are in particular a vast generalization of Smale spaces. On the other hand, expansive dynamical systems retain some form of hyperbolic structure. A precise example of this last informal statement is the existence of the adapted metric on an expansive dynamical system, see \cite{fried1987}. To summarize the relationship, we have that
\[ (X,\varphi) \text{ is a Smale space } \implies (X,\varphi) \text{ is finitely presented } \implies (X,\varphi) \text{ is expansive } \] 

In \cite{thomsen2010c}, Thomsen generalizes the constructions of Ruelle and Putnam to any expansive dynamical system. However at present the $C^\ast$-algebras associated with an expansive dynamical system are not well-understood. Based on examples it is clear that many techniques used to study Smale spaces and their $C^\ast$-algebras do not generalize to the collection of all expansive dynamical systems.  For example Toeplitz flows are minimal shift spaces --- hence they are expansive systems which, in the non-trivial case, do not contain periodic points \cite{downarowicz05}.  This makes Thomsen's construction of the heteroclinic algebras not applicable, see \cite[Chapter 4]{thomsen2010c}.

The goal of this paper is two-fold. Firstly, we isolate a class of expansive dynamical systems that is more general than Smale spaces (and even more general than finitely presented systems), but for which many Smale space techniques can still be applied. These systems will be called synchronizing dynamical systems. Our second goal is to study the structure of Thomsen's $C^\ast$-algebras for this class and contrast these results with the Smale space case.

A point in an expansive dynamical system is called synchronizing if it has a local product neighborhood, see Section \ref{SecSynDS} for the precise definition. The importance of local product structure is well established in the study of dynamical systems. However, the formulation given here (which very much builds on work of Fried \cite{fried1987}) seems to be new; the term synchronizing is rooted in the theory of subshifts and the precise connection is discussed in \cite{AndrewPhDThesis}. 

A synchronizing dynamical system (which we refer to as a synchronizing system) is an irreducible expansive dynamical system that has at least one synchronizing point. It follows from the definitions of irreducible and synchronizing point that the set of synchronizing points in a synchronizing system is open and dense. Furthermore, Smale spaces are exactly systems in which each point is synchronizing. Also, every irreducible finitely presented system is synchronizing. However, even when one restricts to the study of subshifts, there are many interesting examples of synchronizing systems that are not finitely presented. In summary, synchronizing systems are a significant generalization of Smale spaces, but the existence of a local product neighborhood for ``most" of the points allows one to apply Smale space type techniques.

With the relevant class of dynamical systems now determined, our results take three general forms: purely dynamical, groupoid related, and $C^\ast$-algebraic. The most important dynamical result is that the set of periodic points is dense in a synchronizing system. This is also the most involved proof in the paper. This result is related to a result of Artigue, Brum, and Potrie in \cite{artigue2008local}. There are two differences between them. Firstly, the result in \cite{artigue2008local} only applies to manifolds. Secondly, the result in \cite{artigue2008local} assumes that there is a dense set of hyperbolic periodic points and proves that there is an open and dense set of points with local product structure. Our result is in the other direction and does not make any assumption about the underlying space (beyond it being a compact metric space).

The groupoid and $C^\ast$-algebraic results are very much intertwined. A prototypical result is that the set of synchronizing points is invariant under the local conjugacy relation; the $C^\ast$-algebraic implication is that there is an ideal corresponding to the restriction of the local conjugacy relation to the synchronizing points. This ideal behaves in many ways like the homoclinic algebra of a Smale space.

As mentioned above, (irreducible) finitely presented systems are synchronizing. For such systems, our results take their most definitive form (the reader will find all relevant definitions in the main body of the paper):

\begin{theorem}
Suppose $(X,\varphi)$ is a mixing finitely presented system and $P \subseteq X$ a finite set of synchronizing periodic points. Then, the groupoids $G^{lc}_{\sync}(X, \varphi)$, $G^\lcs(X,\varphi,P)$, and $G^\lcu(X,\varphi,P)$ are each amenable and \[ 0 \longrightarrow \I_\sync(X,\varphi) \longrightarrow A(X,\varphi) \longrightarrow A(X,\varphi)/\I_\sync(X, \varphi) \longrightarrow 0 \] is an exact sequence of $C^\ast$-algebras, where
\begin{enumerate}[(i)]
    \item $\I_\sync(X,\varphi)$ is Morita equivalent to $S(X,\varphi,P) \otimes U(X,\varphi,P)$ and
    \item $\I_\sync(X,\varphi)$, $S(X,\varphi,P)$, and $U(X,\varphi,P)$ are all simple.
\end{enumerate}
\end{theorem}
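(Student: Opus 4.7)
The plan is to follow the Smale-space blueprint developed by Putnam, adapted to finitely presented systems via the synchronizing ideal structure built in the preceding sections. Four ingredients must be assembled: the short exact sequence, amenability of the three groupoids, the Morita equivalence with $S(X,\varphi,P) \otimes U(X,\varphi,P)$, and simplicity. The order I would take is (1) exact sequence, (2) amenability, (3) Morita equivalence, (4) simplicity; each stage provides data the next one needs.

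For the short exact sequence, I would invoke the earlier result that the set of synchronizing points is open in $X$ and invariant under local conjugacy. This makes $G^{lc}_{\sync}(X,\varphi)$ an open invariant subgroupoid of the local conjugacy groupoid whose $C^\ast$-algebra is $A(X,\varphi)$, so the sequence arises from the standard open–invariant–subgroupoid construction. Amenability would then follow by combining the existence of an sft cover of a mixing finitely presented system (a theorem of Fried) with the facts that amenability of étale groupoids passes to restrictions on open invariant sets and is preserved under the pullback along such a cover; the stable and unstable groupoids of an sft are already AF, which seeds the argument and then propagates to $G^{lc}_{\sync}$, $G^\lcs$, and $G^\lcu$.

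The Morita equivalence is the core step. On the synchronizing locus the local product neighborhoods allow one to factor local conjugacy into a stable equivalence followed by an unstable equivalence; this gives a bracket-type identification of $G^{lc}_{\sync}$ with an appropriate fibered product of $G^\lcs$ and $G^\lcu$. Restricting the stable and unstable groupoids to the finite set $P$ of synchronizing periodic points yields the groupoids defining $S(X,\varphi,P)$ and $U(X,\varphi,P)$, and density of synchronizing periodic points (the main dynamical theorem of the paper) together with mixing lets one argue, as in Putnam, that $P$ is a saturated transversal for both the stable and unstable relations, so the restrictions to $P$ are Morita equivalent to the full groupoids over the synchronizing locus. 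Combining the bracket factorization with this transversal reduction identifies $\I_\sync$ up to Morita equivalence with $S(X,\varphi,P) \otimes U(X,\varphi,P)$.

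Simplicity of $\I_\sync$, $S(X,\varphi,P)$, and $U(X,\varphi,P)$ then reduces to showing that the underlying groupoids are minimal and effective. Minimality comes from mixing plus density of stable and unstable orbits of synchronizing periodic points; effectiveness is a consequence of expansiveness (distinct points with coincident forward or backward orbits cannot be $\varphi$-related). The hard part, I expect, is the Morita-equivalence step: the bracket is only partially defined, being confined to the synchronizing locus, so one must verify continuity and groupoid compatibility of the decomposition of local conjugacy into its stable and unstable components on a set that is open and dense but not all of $X$, and must check that restriction to $P$ genuinely captures the full stable and unstable equivalences up to Morita equivalence, despite the loss of uniform hyperbolicity outside the synchronizing locus.
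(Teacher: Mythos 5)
Your overall architecture (ideal from the open invariant set of synchronizing points, a bracket-type linking space between $G^{lc}_{\sync}$ and the stable/unstable groupoids, transversal-style reduction to $P$, simplicity via minimality of principal groupoids) matches the paper's outline, but two of your key steps rest on claims that do not hold as stated, and the specifically finitely-presented input is missing. First, the amenability argument: you assert that amenability ``is preserved under the pullback along'' the SFT cover furnished by Fried's characterization. No such general preservation is available here --- indeed, amenability of the full local conjugacy groupoid of a finitely presented system is an open problem, so any argument that transfers amenability wholesale through the cover proves too much. The paper instead uses Fisher's theorem producing a \emph{u-resolving} (resp.\ s-resolving), one-to-one almost everywhere factor map $\pi:Y\to X$ from a mixing \emph{Smale space}, shows $\pi$ is finite-to-one and that $\pi\times\pi$ carries $G^{s}(Y,\psi,\pi^{-1}(P))$ onto an open subgroupoid of $G^{\lcs}(X,\varphi,P)$, and then transfers \emph{Borel} amenability through this finite-to-one map (Putnam--Spielberg for the Smale space, Renault's ``amenability is a Borel property,'' and the Jackson--Kechris--Louveau result on finite-to-one extensions). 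Moreover, amenability of $G^{lc}_{\sync}(X,\varphi)$ is not obtained by any covering or restriction argument; it is deduced from the Morita equivalence $\I_\sync(X,\varphi)\sim S\otimes U$ together with amenability of $G^{\lcs}$ and $G^{\lcu}$, so your ordering (amenability before the equivalence) cannot be carried out as written.

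Second, both your Morita-equivalence step and your simplicity step quietly assume that mixing plus density of synchronizing periodic points makes $P$ a ``saturated transversal,'' i.e.\ that the stable/unstable \emph{local conjugacy} classes of points of $X^\text{u}(P)$ and $X^\text{s}(P)$ are dense in those sets (in their inductive-limit topologies). This is exactly the hypothesis of the paper's Lemma on the existence of a synchronizing midpoint $z$ with $x\sim_\lcs z\sim_\lcu y$, and the paper is explicit that it can fail for general mixing synchronizing systems: local conjugacy is strictly stronger than homoclinicity/stable equivalence off the Smale space setting (the even shift already illustrates this), so density of $X^\text{s}(p)$ and $X^\text{u}(p)$ in $X$ does not give what you need. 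The finitely-presented hypothesis enters precisely here: one uses Fisher's resolving extension again, the fact that a synchronizing periodic point $p$ can be chosen with a unique preimage $q$, that $\pi$ restricted to $Y^\text{u}(q)$ is a homeomorphism onto $X^\text{u}(p)$, and that local conjugacies on the Smale space descend through the resolving map, to conclude that $X^\lcs(x)\cap X^\text{u}(p)$ is dense in $X^\text{u}(p)$ (and dually). Without this lemma the surjectivity of the moment map onto $X^\text{u}(p)\times X^\text{s}(p)$ in the linking-space (Muhly--Renault--Williams) argument fails, and minimality of $G^{\lcs}$ and $G^{\lcu}$ --- hence simplicity of $S$, $U$, and $\I_\sync$ --- is not established. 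Your instinct that the delicate point is the partially defined bracket is reasonable, but the genuinely missing idea is the transfer of local conjugacy data from the resolving Smale space cover.
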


In contrast, for a mixing Smale space, the homoclinic algebra is simple and is Morita equivalent to the tensor of product of the stable and unstable algebras (that is, $A(X,\varphi)$ is Morita equivalent to $S(X,\varphi,p) \otimes U(X,\varphi,p)$ in the Smale space case). Thus, it is the ideal $\I_\sync(X, \varphi)$ that has similar properties to the homoclinic algebra of a Smale space. On the other hand, there are examples where $\I_\sync(X, \varphi)$ is not Morita equivalent to the homoclinic algebra of a Smale space. This means that we are dealing with a strictly larger class of $C^\ast$-algebras. This particular result along with many more about synchronizing shift spaces can be found in \cite{shift-paper} (also see \cite{AndrewPhDThesis}). 

There are many open questions concerning synchronizing systems (and more generally expansive systems) and the associated groupoids/$C^\ast$-algebras. For example, an important open problem is whether the homoclinic groupoid of a general expansive system is amenable. This question is even open in the finitely presented case; although we do prove that the groupoid associated to the synchronizing points is amenable in the finitely presented case and prove that the homoclinic groupoid is amenable when there are finitely many non-synchronizing points and the system is finitely presented.

The structure of the paper is as follows. We begin with the required preliminaries. This includes a discussion of expansive dynamical systems and the subclasses of Smale spaces and finitely presented systems. The precise formulation of a local product structure is developed in Section \ref{sec:local-stable-unstable}. Much of this is based on work of Fried \cite{fried1987}. The definition of synchronizing dynamical system and a proof that periodic points are dense in such a system is given in Section \ref{SecSynDS}. Various groupoids and their $C^\ast$-algebras are introduced and studied in Sections \ref{SecGroupoid} and \ref{c-star-algebras}. This builds on work of a number of people, most notably Putnam, Ruelle, and Thomsen. Finally, the structure of the homoclinic algebra of a mixing finitely presented system is studied in Section \ref{SecFPS}. 


\section*{Acknowledgments}

The authors thank Ian Putnam for interesting and insightful discussions. We also thank the referee for their careful reading of the paper and useful suggestions to improve it.

\section{Preliminaries}

\subsection{Expansive dynamical systems}
In our context, a dynamical system is a pair $(X, \varphi)$ where $X$ is a compact metric space and $\varphi: X \rightarrow X$ is a homeomorphism. We will typically assume that $X$ is infinite. If $x\in X$, then the orbit of $x$ is the set 
\[ \{ \varphi^n(x) \mid n \in \mathbb{Z} \} \,. \]
\begin{definition}\label{def:factor}
A \emph{factor map} between the dynamical systems $(Y,\psi)$ and $(X,\varphi)$ is a surjective continuous map $\pi : Y \to X$ such that $\pi \circ \psi = \varphi \circ \pi$.  In this situation we say that $(X,\varphi)$ is a \emph{factor} of $(Y,\psi)$.  If $\pi$ is in addition a homeomorphism then we say that $(X,\varphi)$ and $(Y, \psi)$ are \emph{(topologically) conjugate}.
\end{definition}
\begin{definition}\label{def:expansiveness}
A dynamical system $(X,\varphi)$ is called \emph{expansive} if there is a constant $\varepsilon_X > 0$, called the \emph{expansiveness constant} of $(X,\varphi)$, such that for any $x,y \in X$, \[ d(\varphi^n(x),\varphi^n(y)) \leq \varepsilon_X \text{ for all } n \in \mathbb{Z} \] implies $x = y$.
\end{definition}
\begin{example}
If $(X, \varphi)$ is a shift space, then it is expansive. In fact, every zero dimensional expansive system is topologically conjugate to a shift space, see \cite[Theorem 2.2.8]{MR3243636} and \cite{katok95}.
\end{example}
We will see many other examples of expansive dynamical systems. For an example of a non-expansive system, we have the following.
\begin{example}
Suppose that $(X, \varphi)$ is a dynamical system with $\varphi$ an isometry, meaning that
\[ d(\varphi(x_1), \varphi(x_2))=d(x_1, x_2)  \hbox{ for each }x_1, x_2  \in X. \] 
Then $(X, \varphi)$ is not expansive.
\end{example}
\begin{definition}\label{def:non-wandering}
Let $(X,\varphi)$ be a dynamical system.  We say that $x \in X$ is \emph{non-wandering} if for every neighborhood $U$ of $x$ there exists an $n > 0$ such that $\varphi^n(U) \cap U \neq \emptyset$.  We let the \emph{non-wandering set} be the set $\Omega(X,\varphi) = \{ x \in X \mid x \text{ is non-wandering} \}$.  If every $x \in X$ is non-wandering we say that $(X,\varphi)$ itself is \emph{non-wandering}.
\end{definition}
If $X$ is a compact metric space and $\varphi$ is a homeomorphism then $\Omega(X,\varphi)$ is in fact a non-empty closed subset of $X$ which is invariant under $\varphi$, making $\left(\Omega(X,\varphi), \varphi|_{\Omega(X,\varphi)}\right)$ a non-wandering dynamical system \cite{walters1981}.

\begin{definition}\label{def:periodic}
Let $(X,\varphi)$ be a dynamical system.  A \emph{periodic point} is a point $p \in X$ such that $\varphi^n(p) = p$ for some $n \geq 1$.  If $n = 1$ then $p$ is called a \emph{fixed point}.  We define the sets \[ \Per_n(X,\varphi) = \{ p \in X \mid \varphi^n(p) = p \} \] for each $n \geq 1$.  We also define the union of these, \[ \Per(X,\varphi) = \bigcup_{n \geq 1} \Per_n(X,\varphi) \,, \] which is the collection of all periodic points in $(X,\varphi)$.
\end{definition}

Note that a periodic point is clearly non-wandering, so we have that $\Per(X,\varphi) \subseteq \Omega(X,\varphi)$.  We will find that many expansive dynamical systems have a dense set of periodic points.  An interesting fact about expansive systems is that $\Per_n(X,\varphi)$ must be finite for any $n$.  We will also use the fact that if $(X,\varphi)$ and $(Y,\psi)$ are dynamical systems and $\pi : Y \to X$ is a factor map, then $\pi\parens{\Per(Y,\psi)} \subseteq \Per(X,\phi)$.  This follows because for $q \in \Per_n(Y,\psi)$ we have $\phi^n(\pi(q)) = \pi(\psi^n(q)) = \pi(q)$.

In addition to the above properties, we can also impose some global conditions on the recurrent behavior of a dynamical system.
\begin{definition} \label{def:Irreducible}
A dynamical system $(X,\varphi)$ is called \emph{irreducible} if for any ordered pair of non-empty open sets $U, V \subseteq X$ there is an $n > 0$ such that $\varphi^n(U) \cap V \neq \emptyset$.
\end{definition}
If $X$ is a compact metric space and $\varphi$ is a homeomorphism, then irreducibility for a non-wandering dynamical system is equivalent to the existence of a dense orbit in $X$, that is there is some $x \in X$ such that $\overline{\{ \varphi^n(x) \mid n \in \mathbb{Z} \}} = X$ \cite{walters1981}.  Lastly, we have the following definition.

\begin{definition}
A dynamical system $(X,\varphi)$ is called \emph{mixing} if for any ordered pair of non-empty open sets $U, V \subseteq X$ there is an $N > 0$ such that $\varphi^n(U) \cap V \neq \emptyset$ for all $n \geq N$.  
\end{definition}

Note that, of the above three global recurrence properties, we have the following implications \[ (X,\varphi) \text{ is mixing } \implies (X,\varphi) \text{ is irreducible } \implies (X,\varphi) \text{ is non-wandering } \] but the converse of each implication above is not necessarily true.  


There are several equivalence relations that we will use to understand the asymptotic behavior of dynamical systems.  

\begin{definition}
Let $(X,\varphi)$ be a dynamical system, then we have the following equivalence relations on $X$.
\begin{enumerate}[(i)]
    \item $x \sim_\text{s} y$ if and only if $\displaystyle \lim_{n\to\infty} d(\varphi^n(x), \varphi^n(y)) = 0$,
    \item $x \sim_\text{u} y$ if and only if $\displaystyle \lim_{n\to\infty} d(\varphi^{-n}(x), \varphi^{-n}(y)) = 0$, and
    \item $x \sim_\text{h} y$ if and only if $x \sim_\text{s} y$ and $x \sim_\text{u} y$.
\end{enumerate}
If $x \sim_\text{s} y$ we say that $x$ and $y$ are \emph{stably equivalent}, likewise if $x \sim_\text{u} y$ we say that $x$ and $y$ are \emph{unstably equivalent}.  If $x \sim_\text{h} y$ we say that $x$ and $y$ are \emph{homoclinic}.  We denote the stable, unstable, and homoclinic equivalence classes of $x$ as $X^\text{s}(x)$, $X^\text{u}(x)$, and $X^\text{h}(x)$, respectively.  If $P \subseteq X$ is a finite set of periodic points, then we will also denote $X^\text{s}(P)$ as the set \[ X^\text{s}(P) = \bigsqcup_{p \in P} X^\text{s}(p) \] and likewise for $X^\text{u}(P)$.  Note that by Lemma \ref{lem:periodic-h-implies-equals}, $X^\text{s}(p)$ and $X^\text{s}(q)$ are disjoint as sets if $p \neq q$.
\end{definition}

These relations tend to only be interesting if there is some chaotic behavior in $(X,\varphi)$. For example if $\varphi$ is an isometry (so that $(X, \varphi)$ is not expansive) then $x \sim_s y$ if and only if $x = y$.  We will use these relations to study the asymptotic behavior in expansive dynamical systems.  Below is a simple application of expansiveness which will become useful later.

\begin{lemma}\label{lem:periodic-h-implies-equals}
If $(X,\varphi)$ is an expansive dynamical system and $p , q \in \Per(X,\varphi)$ are periodic points such that either $p \sim_\text{s} q$ or $p \sim_\text{u} q$, then $p = q$.  In particular $p \sim_\text{h} q$ implies $p = q$.
\end{lemma}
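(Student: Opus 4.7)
The plan is to observe that for periodic points, the stable-equivalence condition is unexpectedly rigid because the distance along the forward orbit is itself periodic, hence cannot merely tend to zero without already being zero. Expansiveness is not needed in a deep way — only the hypothesis that both points lie in $\Per(X,\varphi)$ is really used.

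First I would fix the common period: choose $m, n \geq 1$ with $\varphi^m(p) = p$ and $\varphi^n(q) = q$, and set $N = mn$ (or $\mathrm{lcm}(m,n)$), so that $\varphi^N(p) = p$ and $\varphi^N(q) = q$. Consider the sequence $a_k := d(\varphi^k(p), \varphi^k(q))$ for $k \in \mathbb{Z}$. Because both orbits are fixed by $\varphi^N$, we have $a_{k+N} = a_k$ for every $k$, so $a_k$ is a periodic sequence taking only finitely many values, namely $\{a_0, a_1, \dots, a_{N-1}\}$.

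Now assume $p \sim_\text{s} q$, i.e.\ $a_k \to 0$ as $k \to \infty$. A periodic sequence that converges must be eventually constant at its limit, so $a_k = 0$ for all sufficiently large $k$, and by periodicity $a_k = 0$ for all $k \geq 0$. In particular $a_0 = d(p,q) = 0$, so $p = q$. The case $p \sim_\text{u} q$ is identical after replacing $\varphi$ with $\varphi^{-1}$ (which preserves the periodicity of $p$ and $q$ and the argument above). The final ``in particular'' statement is then immediate since $p \sim_\text{h} q$ implies $p \sim_\text{s} q$ by definition.

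There is really no main obstacle here; the only subtlety worth flagging is that one does not need to invoke the expansiveness constant $\varepsilon_X$ at all — the result follows purely from the pigeonhole/periodicity observation. If one wanted an argument that visibly used expansiveness, one could alternatively note that $d(\varphi^k(p), \varphi^k(q)) = d(\varphi^{k \bmod N}(p), \varphi^{k \bmod N}(q))$ is bounded by $\varepsilon_X$ for all $k \in \mathbb{Z}$ (since it tends to $0$ on one side and takes only $N$ values on the other), and then apply the definition of expansiveness directly to conclude $p = q$; but the cleaner route is the periodicity argument above.
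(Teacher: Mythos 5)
Your proof is correct. Note that the paper itself does not give an argument for this lemma --- it defers to the thesis \cite{AndrewPhDThesis} --- so there is no in-paper proof to compare against; the expected argument there is essentially the expansiveness route you sketch at the end (show that $d(\varphi^k(p),\varphi^k(q))\leq \varepsilon_X$ for all $k\in\mathbb{Z}$ and invoke Definition \ref{def:expansiveness}). Your main argument is actually cleaner and slightly more general: taking $N$ a common period, the sequence $a_k=d(\varphi^k(p),\varphi^k(q))$ is $N$-periodic, and since $a_r=a_{r+jN}\to 0$ as $j\to\infty$ for each $r$, every $a_r$ vanishes, giving $d(p,q)=a_0=0$; this uses no expansiveness at all, so the statement holds for arbitrary dynamical systems with $p,q$ periodic. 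Two small remarks: what you use is pure periodicity of the sequence, not pigeonhole; and in your alternative expansiveness-based sketch the parenthetical justification that the distances are bounded by $\varepsilon_X$ should be phrased as ``each value $a_r$ occurs for arbitrarily large $k$, and $a_k\to 0$, hence $a_r\le\varepsilon_X$,'' which is the standard way to make that version airtight.
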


We refer the reader to \cite{AndrewPhDThesis} for a proof.

\subsection{Smale spaces} \label{Section-SmaleSpaces}

\begin{definition} \label{SmaSpaDef}
A Smale space is a metric space $(X, d)$ along with a homeomorphism $\varphi: X\rightarrow X$ with the following additional structure: there exists global constants $\epsilon_X>0$ and $0< \lambda < 1$ and a continuous map, called the bracket map, 
\[
[ \ \cdot \  , \ \cdot \ ] :\{(x,y) \in X \times X : d(x,y) \leq \epsilon_X\}\to X
\]
such that the following axioms hold
\begin{itemize}
\item[B1] $\left[ x, x \right] = x$;
\item[B2] $\left[x,[y, z] \right] = [x, z]$ when both sides are defined;
\item[B3] $\left[[x, y], z \right] = [x,z]$ when both sides are defined;
\item[B4] $\varphi[x, y] = [ \varphi(x), \varphi(y)]$ when both sides are defined;
\item[C1] For $x,y \in X$ such that $[x,y]=y$, $d(\varphi(x),\varphi(y)) \leq \lambda d(x,y)$;
\item[C2] For $x,y \in X$ such that $[x,y]=x$, $d(\varphi^{-1}(x),\varphi^{-1}(y)) \leq \lambda d(x,y)$.
\end{itemize}
We denote a Smale space simply by $(X,\varphi)$.
\end{definition}

Examples of Smale spaces and an introduction to their basic properties can be found in \cite{putnamNotes19}. In particular, every Smale space is expansive. However, there are expansive systems that are not Smale spaces. For example, a shift space is a Smale space if and only if it is a shift of finite type, see \cite[Theorem 2.2.8]{MR3243636}. For more of shift of finite type, see \cite{lindmarcus}. We discuss one explicit example for completeness.

\begin{example}\label{ex:golden-mean-shift}
Consider the shift space $X \subseteq \{\word{a}, \word{e}, \word{f} \}^\mathbb{Z}$ given by the following graph $G$:
\begin{center}
\begin{tikzpicture}
\tikzset{every loop/.style={looseness=6, in=130, out=230}}
\pgfmathsetmacro{\S}{0.9}

\node[shape=circle, draw=black] (A) at (0,0) {$v$};
\node[shape=circle, draw=black] (B) at (1,0) {$w$};
\path[->,>=stealth] (A) edge[loop left] node[scale=\S, left] {$\word{a}$} (A);
\path[->,>=stealth] (A) edge[out=60, in=120] node[scale=\S, above] {$\word{e}$} (B);
\path[->,>=stealth] (B) edge[out=240, in=300] node[scale=\S, below] {$\word{f}$} (A);
\end{tikzpicture}    
\end{center}
That is to say that the elements are bi-infinite sequences of paths of edges from the graph $G$; the self-homeomorphism is the shift map, see \cite[Section 2.3]{lindmarcus} for more details. 

This is a shift space called the \emph{golden mean shift}.  It is conjugate to the shift of finite type $X_F \subseteq \{\word{0}, \word{1}\}^\mathbb{Z}$ with the set of forbidden words $F = \{ \word{11} \}$.  To see this observe that if we construct the edge shift for $X_F$ with $N = 2$, we obtain the above graph with the labeling $\word{a} = \word{00}$, $\word{e} = \word{01}$, and $\word{f} = \word{10}$ --- see \cite[Section 2.3]{lindmarcus} for more details.
\end{example}

\subsection{Finitely Presented Systems}\label{sec:finitely-presented-systems}

The definition of finitely presented systems is due to Fried \cite{fried1987}.  There are several equivalent characterizations of finitely presented systems in \cite{fried1987}, but we will use the following characterization.  An expansive dynamical system $(X,\varphi)$ is called \emph{finitely presented} if it is a factor of a shift of finite type.  As a class of dynamical systems, finitely presented systems include Smale spaces.  The shift spaces which are finitely presented systems are exactly the sofic shifts, see \cite{fried1987} for details. 

In the next example, we discuss a particular sofic shift. Our discussion summarizes a more detailed discussion of this shift in \cite{lindmarcus}, see in particular Examples 1.2.4, 1.5.6, and 2.1.5 in \cite{lindmarcus}.

\begin{example} \label{even-shift}
One explicit example of a sofic shift is the even shift. It is a nice example to have in mind when considering the definitions and theorems of the paper. The even shift is the sofic shift space in the alphabet $\{\word{0}, \word{1}\}$ whose elements do not contain any of the finite words in the set $F = \{\word{10}^{2k+1}\word{1} \mid k \geq 0\}$.  That is to say that the elements of the even shift are bi-infinite binary sequences which have an even number of consecutive zeros between any two ones.  We will denote the even shift as $X_\ev$.  We can see that the even shift is not a shift finite type since there is no upper bound on the size of the words that we must forbid, that is to say the countable set $F$ is a minimal set of forbidden words for $X_\ev$.  However, the even shift is indeed sofic since it admits the following graph presentation.

\vspace{1em}
\begin{center}
\begin{tikzpicture}
\tikzset{every loop/.style={looseness=10, in=130, out=230}}
\pgfmathsetmacro{\S}{0.9}

\node[shape=circle, draw=black] (A) at (0,0) {};
\node[shape=circle, draw=black] (B) at (1,0) {};
\path[->,>=stealth] (A) edge[loop left] node[scale=\S, left] {$\word{1}$} (A);
\path[->,>=stealth] (A) edge[out=60, in=120] node[scale=\S, above] {$\word{0}$} (B);
\path[->,>=stealth] (B) edge[out=240, in=300] node[scale=\S, below] {$\word{0}$} (A);
\end{tikzpicture}    
\end{center}
In this case, the edge shift $X_G$ coming from the unlabeled graph above is exactly the golden mean shift discussed in Example \ref{ex:golden-mean-shift}.  In other words, the even shift is a factor of the golden mean shift.

It is important to note that the factor map $\pi : X_G \to X$ may not be injective.  For example in the even shift there are two elements of $X_G$ that get mapped to the point $\overline{\word{0}} = \dots \word{00.000} \dots$.  However in this case this is the only such point since as soon there is a $\word{1}$ in a sequence $x \in X$, we know exactly which element in $X_G$ must correspond to it.  This follows from the fact that there is only one vertex with an arrow labeled $\word{1}$ pointing to it.  Hence $\pi$ is injective on almost all points in $X_G$ except the two points that get mapped to $\overline{\word{0}}$.
\end{example}

We have the following theorem which says that all expansive homeomorphisms of orientable surfaces are finitely presented, see \cite{hiraide87, hiraide87-2, lewowicz89, artigue2008local}.  We have adapted the language of this theorem.

\begin{theorem}{\cite{hiraide87, hiraide87-2, lewowicz89, Lewowicz08, artigue2008local}} \label{thm:surfaces}
Let $\varphi : M \to M$ be an expansive homeomorphism of a compact, connected, oriented, boundaryless surface $M$.  Then
\begin{enumerate}[(i)]
    \item $M$ cannot be $S^2$,
    \item if $M$ is the 2-torus then $(M, \varphi)$ is conjugate to a hyperbolic toral automorphism (\ie it is a Smale space), and
    \item if the genius of $M$ is larger than 1, then $(M, \varphi)$ is finitely presented but not a Smale space.
\end{enumerate}
\end{theorem}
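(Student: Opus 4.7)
The plan is to reduce all three parts to the existence, for any expansive homeomorphism of a compact surface, of a pair of transverse \emph{singular foliations} on $M$ whose leaves are precisely the stable and unstable sets $X^{\text{s}}(x)$ and $X^{\text{u}}(x)$. Granting that each singular point is a topological saddle with $p \geq 3$ separatrices and that there are only finitely many such points (the structural result of Lewowicz), the Euler--Poincar\'e formula gives
\[ \chi(M) \;=\; \sum_{i} \left( 1 - \tfrac{p_i}{2} \right) \;\leq\; 0, \]
where the sum is over the singular points. Since $\chi(S^2) = 2$, this immediately rules out $M = S^2$, proving (i).

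For (ii), on $T^2$ we have $\chi(M) = 0$, and the inequality above forces $p_i = 2$ for every singular point; that is, there are no singularities at all. The stable and unstable decompositions are thus genuine transverse foliations of $T^2$. Lifting to the universal cover $\R^2$, I would show these foliations are by lines of irrational slopes that remain within bounded Hausdorff distance of a pair of parallel linear foliations. Combined with the induced action of $\varphi$ on $\pi_1(T^2) \cong \Z^2$, a Franks-type rigidity argument then produces a semiconjugacy to a hyperbolic toral automorphism, which expansiveness promotes to a topological conjugacy.

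For (iii), when the genus of $M$ is at least $2$ we have $\chi(M) < 0$, and the inequality permits solutions with $p_i \geq 3$. I would construct a Markov partition by intersecting narrow rectangles taken along the stable and unstable foliations (Bowen's classical construction adapted to the singular setting, as carried out by Hiraide), producing a factor map from a shift of finite type onto $(M, \varphi)$ and hence the finitely presented property. To see $(M, \varphi)$ is not a Smale space, observe that at any $p$-pronged saddle singularity the local product structure demanded by the bracket map must fail: a neighborhood of such a point is simply not homeomorphic to a product of stable and unstable arcs once $p \geq 3$.

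The main obstacle is the existence of the singular stable/unstable foliations themselves. The sets $X^{\text{s}}(x)$ and $X^{\text{u}}(x)$ are defined purely asymptotically, and organizing them into a continuous transverse foliation structure on the surface---with prescribed local saddle models at the finitely many singularities and continuous dependence between leaves---is the technical heart of the papers of Lewowicz and Hiraide, and is decidedly not routine. Once those foliations are in hand, the Euler--Poincar\'e accounting, the rigidity argument on $T^2$, and the Markov-partition construction in higher genus all follow along comparatively standard lines.
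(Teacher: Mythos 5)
The paper does not prove Theorem \ref{thm:surfaces} at all: it is quoted from the literature (Hiraide, Lewowicz, Cerminara--Lewowicz, Artigue--Brum--Potrie), with only the language adapted, so there is no internal proof to compare against. Your outline is a faithful summary of how those references argue: the Lewowicz/Hiraide structure theorem producing transverse singular stable/unstable foliations with finitely many $p$-pronged saddles ($p\geq 3$), the Euler--Poincar\'e index count $\chi(M)=\sum_i\bigl(1-\tfrac{p_i}{2}\bigr)\leq 0$ to exclude $S^2$, a Franks-type semiconjugacy upgraded to a conjugacy on $T^2$, Markov partitions in genus $\geq 2$ to obtain a shift-of-finite-type extension (hence finitely presented), and failure of local product structure at a pronged singularity to rule out the Smale space property. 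Two caveats. First, as you acknowledge, everything rests on the structural result you ``grant''; since that result \emph{is} the theorem of Lewowicz and Hiraide, what you have is an accurate roadmap of the cited proofs rather than an independent argument --- which is acceptable here precisely because the paper itself treats the statement as external input. Second, two small points deserve explicit mention: in (iii) you need the singular set to be nonempty (this follows from the same index formula, since $\chi(M)<0$ is impossible with no singularities), otherwise ``not a Smale space'' does not follow; and historically Hiraide's exclusion of $S^2$ is a separate direct argument rather than a corollary of a structure theorem established on $S^2$, though the index-formula heuristic you give is the standard conceptual explanation.
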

There are many more examples of finitely presented systems that are not Smale spaces, see \cite{fried1987} for details.

\section{Local Product Structure}\label{sec:local-stable-unstable}

In this section we will discuss the construction of the local stable and unstable sets of an element of an expansive dynamical system.  We will also discuss a result of Fried about the existence of a particular metric on expansive systems, see \cite[Lemma 2]{fried1987}.  This metric shows the hyperbolic behavior of expansive systems similar to Axioms C1 and C2 for Smale spaces.  Lastly we will show that the stable (unstable) equivalence class of an element in an expansive dynamical system can be written as a union of local stable (unstable) sets.

Let $(X, \varphi)$ be an expansive dynamical system with expansiveness constant $\varepsilon_X > 0$.  In the rest of this section we follow \cite{fried1987}.
\begin{definition}\label{def:loc-stable-unstable-sets}
For each $x \in X$ and $\varepsilon > 0$ we define the following subsets called, respectively, the \emph{local stable} and \emph{local unstable} sets of $x$.
\begin{align*}
    X^\text{s}(x, \varepsilon) &= \left\{ y \in X \mid d\left(\varphi^n(x),\varphi^n(y)\right) \leq \varepsilon \text{ for all } n \geq 0 \right\} \\
    X^\text{u}(x, \varepsilon) &= \left\{ y \in X \mid d\left(\varphi^{-n}(x),\varphi^{-n}(y)\right) \leq \varepsilon \text{ for all } n \geq 0 \right\}
\end{align*}
Furthermore, one can easily show from this definition that
\begin{align*}
    \varphi^{-N}\left(X^\text{s}\left(\varphi^{N}(x), \varepsilon\right)\right) &= \left\{ y \in X \mid d\left(\varphi^n(x),\varphi^n(y)\right) \leq \varepsilon \text{ for all } n \geq N \right\} \text{ , and } \\
    \varphi^{N}\left(X^\text{u}\left(\varphi^{-N}(x), \varepsilon\right)\right) &= \left\{ y \in X \mid d\left(\varphi^{-n}(x),\varphi^{-n}(y)\right) \leq \varepsilon \text{ for all } n \geq N \right\}
\end{align*}
for any $N \in \Z$.
\end{definition}
The next lemma is a well-known application of expansiveness; the proof is omitted.
\begin{lemma}\label{lem:bracket-unique-intersection}
For $\displaystyle 0 < \varepsilon \leq \frac{\varepsilon_X}{2}$, the intersection $X^\text{s}(x, \varepsilon) \cap X^\text{u}(y, \varepsilon)$ consists of at most one point in $X$.
\end{lemma}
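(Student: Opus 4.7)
The plan is to argue by contradiction using expansiveness. Suppose $z_1, z_2$ both lie in $X^s(x,\varepsilon) \cap X^u(y,\varepsilon)$; I want to show that then $d(\varphi^n(z_1), \varphi^n(z_2)) \leq \varepsilon_X$ for every $n \in \mathbb{Z}$, which by Definition \ref{def:expansiveness} forces $z_1 = z_2$.

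For the forward orbit, I use membership in $X^s(x,\varepsilon)$. For every $n \geq 0$, Definition \ref{def:loc-stable-unstable-sets} gives $d(\varphi^n(z_i), \varphi^n(x)) \leq \varepsilon$ for $i = 1,2$, so the triangle inequality yields
\[
d(\varphi^n(z_1), \varphi^n(z_2)) \leq d(\varphi^n(z_1), \varphi^n(x)) + d(\varphi^n(x), \varphi^n(z_2)) \leq 2\varepsilon \leq \varepsilon_X.
\]
For the backward orbit, I use membership in $X^u(y,\varepsilon)$ in the same way: for every $n \geq 0$, $d(\varphi^{-n}(z_i), \varphi^{-n}(y)) \leq \varepsilon$ for $i = 1,2$, so $d(\varphi^{-n}(z_1), \varphi^{-n}(z_2)) \leq 2\varepsilon \leq \varepsilon_X$.

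Combining the two estimates gives $d(\varphi^n(z_1), \varphi^n(z_2)) \leq \varepsilon_X$ for all $n \in \mathbb{Z}$, so expansiveness forces $z_1 = z_2$. There is no real obstacle here; the only subtlety is the factor of two in the triangle inequality, which is exactly why the hypothesis $\varepsilon \leq \varepsilon_X/2$ appears.
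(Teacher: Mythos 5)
Your proof is correct and is precisely the standard expansiveness argument the paper alludes to (the paper omits the proof as well known): the stable condition at $x$ controls $n\geq 0$, the unstable condition at $y$ controls $n\leq 0$, and the hypothesis $\varepsilon \leq \varepsilon_X/2$ absorbs the factor of two from the triangle inequality so that expansiveness applies. Nothing further is needed.
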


\subsection{The Bracket Map}

\begin{definition}\label{def:bracket-map}
For $0 < \varepsilon \leq \frac{\varepsilon_X}{2}$ we define the set
\[ D_\varepsilon = \{ (x, y) \in X \times X \mid X^\text{s}(x, \varepsilon) \cap X^\text{u}(y, \varepsilon) \neq \emptyset \} \]
and a map $[-,-]: D_\varepsilon \to X$, called the \emph{bracket map}, such that $[x,y] \in X^\text{s}(x, \varepsilon) \cap X^\text{u}(y, \varepsilon)$.  By Lemma \ref{lem:bracket-unique-intersection} this map is well-defined.
\end{definition}
\begin{remark}
It is well-known that if $(X, \varphi)$ is a Smale space, then the definition of the bracket map in the previous definition agrees with the one in the definition of a Smale space.
\end{remark}

The following lemma is due to Fried \cite{fried1987}.  One can find a detailed proof in \cite{AndrewPhDThesis}.

\begin{lemma}{\cite{fried1987}}
The map $[-,-]$ is continuous and $D_\varepsilon$ is closed in $X \times X$.  Additionally, the set $\Delta_X = \{(x,x) \mid x \in X\}$ is contained in $D_\varepsilon$, and $[x,x] = x$ for all $x \in X$.
\end{lemma}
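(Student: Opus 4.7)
The easy parts I would dispatch first. For any $x \in X$, the zero distance $d(\varphi^n(x), \varphi^n(x)) = 0 \leq \varepsilon$ holds for all $n \in \Z$, so $x \in X^s(x,\varepsilon) \cap X^u(x,\varepsilon)$. This immediately gives $(x,x) \in D_\varepsilon$, and by the uniqueness statement in Lemma \ref{lem:bracket-unique-intersection} the intersection consists of exactly the point $x$, hence $[x,x] = x$.

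The main work is to prove closedness of $D_\varepsilon$ and continuity of $[-,-]$ simultaneously via a single sequential argument. The key observation is that the defining conditions of the local stable and unstable sets, $d(\varphi^n(x), \varphi^n(y)) \leq \varepsilon$ for $n \geq 0$ (respectively $n \leq 0$), are closed conditions that pass to limits using continuity of each $\varphi^n$. So I would start with a sequence $(x_n, y_n) \in D_\varepsilon$ converging to $(x,y) \in X \times X$, and set $z_n = [x_n, y_n]$. Since $X$ is compact, some subsequence $z_{n_k}$ converges to a point $z \in X$. For each fixed $m \geq 0$ the inequality $d(\varphi^m(x_{n_k}), \varphi^m(z_{n_k})) \leq \varepsilon$ passes to the limit to give $d(\varphi^m(x), \varphi^m(z)) \leq \varepsilon$, so $z \in X^s(x,\varepsilon)$; the analogous argument with $y_{n_k}$ shows $z \in X^u(y,\varepsilon)$. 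Thus $X^s(x,\varepsilon) \cap X^u(y,\varepsilon) \neq \emptyset$, which proves $(x,y) \in D_\varepsilon$, establishing that $D_\varepsilon$ is closed.

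For continuity, I specialize to the case $(x,y) \in D_\varepsilon$ already. The argument above shows that \emph{every} convergent subsequence of $(z_n)$ has its limit in $X^s(x,\varepsilon) \cap X^u(y,\varepsilon)$, and by Lemma \ref{lem:bracket-unique-intersection} that intersection is the single point $[x,y]$. A standard compactness argument then upgrades this to convergence of the full sequence: if $z_n \not\to [x,y]$, there would exist $\delta > 0$ and a subsequence bounded away from $[x,y]$ by $\delta$; extracting a further convergent subsequence by compactness contradicts the uniqueness of the limit. Hence $[x_n, y_n] \to [x,y]$, which is precisely continuity of $[-,-]$.

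There is no real obstacle here; the proof is essentially the observation that ``defined by a countable family of closed inequalities'' is preserved under limits, combined with uniqueness of the intersection. The only mild care needed is in the bookkeeping for the subsequence argument in the continuity step, and in remembering that uniqueness in Lemma \ref{lem:bracket-unique-intersection} is exactly what closes the loop and upgrades subsequential convergence to full-sequence convergence.
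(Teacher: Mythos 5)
Your argument is correct and complete: the diagonal and $[x,x]=x$ claims follow immediately from $d\left(\varphi^n(x),\varphi^n(x)\right)=0$ together with Lemma \ref{lem:bracket-unique-intersection}, the closedness of $D_\varepsilon$ follows because the defining inequalities are non-strict and pass to limits along a convergent subsequence of $z_n=[x_n,y_n]$, and the uniqueness of the intersection upgrades subsequential convergence to full convergence, giving sequential (hence, in a metric space, genuine) continuity of $[-,-]$. The paper itself omits the proof, deferring to Fried and to the cited thesis, and your compactness-plus-uniqueness argument is exactly the standard one found there, so there is nothing to flag.
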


\subsection{Adapted Metric}\label{sec:adapted-metric}

For any expansive dynamical system there exists a metric $d$, which we will call an \emph{adapted metric} \cite[Lemma 2]{fried1987}, and constants $\eta > 0$, $0 < \lambda < 1$ such that $d$ is compatible with the topology on $X$ and
\begin{align*}
    d(\varphi(x), \varphi(y)) &\leq \lambda d(x, y) \text{ for all } y \in X^\text{s}(x, \eta) \text{ , and} \\
    d(\varphi^{-1}(x), \varphi^{-1}(y)) &\leq \lambda d(x, y) \text{ for all } y \in X^\text{u}(x, \eta) \,.
\end{align*}

Furthermore, this metric can additionally be chosen so that both $\varphi$ and $\varphi^{-1}$ are Lipschitz for some Lipschitz constant $K > 1$, see \cite[Lemma 2]{fried1987}.  Although this choice of metric is not necessarily unique, for the rest of the paper we will assume that if $(X,\varphi)$ is an expansive dynamical system then the metric on $X$ is always an adapted metric as described above.  Hence we will often make reference to ``the'' adapted metric.

Using the adapted metric, we can prove the following useful lemma.

\begin{lemma}\label{lemma:bracket-epsilon-continuity}
Fix $0 < \varepsilon_0 \leq \eta$ and assume there is $(x,x) \in \interior(D_{\varepsilon_0})$.  Then for any $\varepsilon > 0$ there exists a $\delta > 0$ such that if $d(x,y) < \delta$ and $d(x,z) < \delta$, then $[y,z]$ is defined and satisfies $[y,z] \in X^\text{s}(y, \varepsilon) \cap X^\text{u}(z, \varepsilon)$.
\end{lemma}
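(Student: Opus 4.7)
The plan is to combine the openness assumption with the contraction inequality of the adapted metric. Since $(x,x) \in \interior(D_{\varepsilon_0})$, there exists $\delta_0 > 0$ such that whenever $d(x,y) < \delta_0$ and $d(x,z) < \delta_0$, one has $(y,z) \in D_{\varepsilon_0}$, so that $[y,z]$ is defined and lies in $X^\text{s}(y, \varepsilon_0) \cap X^\text{u}(z, \varepsilon_0)$. Since $\varepsilon_0 \leq \eta$, the idea is to upgrade the containment in $X^\text{s}(y, \varepsilon_0)$ (respectively $X^\text{u}(z, \varepsilon_0)$) to $X^\text{s}(y, \varepsilon)$ (respectively $X^\text{u}(z, \varepsilon)$) by shrinking $\delta$ enough to force $[y,z]$ very close to $x$ and then contracting along the stable (respectively unstable) direction.

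The first step is a contraction estimate. Because $[y,z] \in X^\text{s}(y, \varepsilon_0) \subseteq X^\text{s}(y, \eta)$, the definition of the local stable set gives $d(\varphi^k(y), \varphi^k([y,z])) \leq \varepsilon_0 \leq \eta$ for every $k \geq 0$; equivalently, $\varphi^n([y,z]) \in X^\text{s}(\varphi^n(y), \eta)$ for every $n \geq 0$. Iterating the adapted metric inequality then yields
\[ d(\varphi^n(y), \varphi^n([y,z])) \leq \lambda^n d(y, [y,z]) \leq d(y, [y,z]) \text{ for all } n \geq 0. \]
The symmetric argument with $\varphi^{-1}$ applied to $[y,z] \in X^\text{u}(z, \varepsilon_0) \subseteq X^\text{u}(z, \eta)$ gives $d(\varphi^{-n}(z), \varphi^{-n}([y,z])) \leq d(z, [y,z])$ for all $n \geq 0$.

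The second step is continuity of the bracket. By Fried's continuity lemma above, the bracket is continuous on $D_{\varepsilon_0}$, and $[x,x] = x$, so $d(x, [y,z]) \to 0$ as $(y,z) \to (x,x)$ within $D_{\varepsilon_0}$. Combining with the triangle inequality $d(y, [y,z]) \leq d(y, x) + d(x, [y,z])$, and similarly for $d(z, [y,z])$, we can choose $\delta \in (0, \delta_0]$ small enough that $d(y, [y,z]) \leq \varepsilon$ and $d(z, [y,z]) \leq \varepsilon$ whenever $d(x,y) < \delta$ and $d(x,z) < \delta$. Feeding this into the contraction estimates of the previous paragraph yields $[y,z] \in X^\text{s}(y, \varepsilon) \cap X^\text{u}(z, \varepsilon)$, as required. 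No serious obstacle arises; the only subtle point is verifying at every $n$ that $\varphi^n([y,z])$ still lies in the $\eta$-local stable set of $\varphi^n(y)$ in order to iterate the contraction, and this follows immediately from $\varepsilon_0 \leq \eta$ together with the definition of $X^\text{s}(y, \varepsilon_0)$.
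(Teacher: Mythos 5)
Your proposal is correct and follows essentially the same route as the paper: use openness of the interior of $D_{\varepsilon_0}$ plus continuity of the bracket (with $[x,x]=x$) to make $d(y,[y,z])$ and $d(z,[y,z])$ at most $\varepsilon$, then iterate the adapted-metric contraction along the stable and unstable directions, the containment $[y,z]\in X^\text{s}(y,\varepsilon_0)\cap X^\text{u}(z,\varepsilon_0)$ with $\varepsilon_0\leq\eta$ justifying each iteration. The only cosmetic difference is that the paper assumes $\varepsilon\leq\eta$ without loss of generality and uses $\lambda^n\varepsilon\leq\varepsilon$, while you keep the $\eta$-containment explicit and discard the $\lambda^n$ factor.
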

\begin{proof}
Assume that $\varepsilon \leq \eta$.  Since $(x,x) \in \interior(D_{\varepsilon_0})$, and by the continuity of $[-,-]$, there is $\delta > 0$ such that $d(x,y) < \delta$ and $d(x,z) < \delta$ imply $[y,z]$ is defined and additionally that $d(y, [y,z]) \leq \varepsilon$ and $d(z, [y,z]) \leq \varepsilon$.  Since $d$ is the adapted metric and $[y,z] \in X^\text{s}(y, \varepsilon_0)$, we have \[ d(\varphi^n(y), \varphi^n([y,z])) \leq \lambda^n d(y, [y, z]) \leq \lambda^n \varepsilon \leq \varepsilon \] for all $n \geq 0$.  Similarly we have $d(\varphi^{-n}(z),\varphi^{-n}([y,z])) \leq \varepsilon$ for all $n \geq 0$.  Hence $[y,z] \in X^\text{s}(y, \varepsilon) \cap X^\text{u}(z, \varepsilon)$.
\end{proof}

\subsection{Global Stable/Unstable Sets}\label{sec:global-stable-unstable-sets}

For the sake of Lemma \ref{lem:stable-unstable-inductive-limit-top}, we introduce the following sets
\begin{align*}
    X^\text{s}_<(x, \varepsilon) &= \left\{ y \in X \mid d\left(\varphi^n(x),\varphi^n(y)\right) < \varepsilon \text{ for all } n \geq 0 \right\} \\
    X^\text{u}_<(x, \varepsilon) &= \left\{ y \in X \mid d\left(\varphi^{-n}(x),\varphi^{-n}(y)\right) < \varepsilon \text{ for all } n \geq 0 \right\}
\end{align*}

\begin{lemma}\label{lem:stable-unstable-inductive-limit-top}
Let $(X,\varphi)$ be an expansive dynamical system and let $x \in X$.  Then for any $\varepsilon > 0$ the stable and unstable equivalence classes satisfy
\begin{enumerate}[(i)]
    \item $\displaystyle X^\text{s}(x) = \bigcup_{N \geq 0} \varphi^{-N}\left(X^\text{s}_<\left(\varphi^{N}(x), \varepsilon\right)\right)$, and
    \item $\displaystyle X^\text{u}(x) = \bigcup_{N \geq 0} \varphi^{N}\left(X^\text{u}_<\left(\varphi^{-N}(x), \varepsilon\right)\right)$,
\end{enumerate}
respectively.  Hence we will often refer to the equivalence classes $X^\text{s}(x)$ and $X^\text{u}(x)$ as the \emph{global stable set} and \emph{global unstable set} of $x$, respectively.
\end{lemma}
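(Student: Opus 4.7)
The plan is to prove part (i); part (ii) follows verbatim after replacing $\varphi$ by $\varphi^{-1}$, which is again expansive with the same adapted metric (its local unstable sets are by definition the local stable sets for $\varphi^{-1}$). I would establish the two inclusions separately.

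The inclusion $X^\text{s}(x) \subseteq \bigcup_{N \geq 0} \varphi^{-N}\!\left(X^\text{s}_<(\varphi^N(x), \varepsilon)\right)$ is essentially definitional and holds for every $\varepsilon > 0$. If $y \in X^\text{s}(x)$, the convergence $d(\varphi^n(x), \varphi^n(y)) \to 0$ yields an $N \geq 0$ past which the distance is strictly less than $\varepsilon$; the identity displayed in Definition \ref{def:loc-stable-unstable-sets} (with the strict inequality in place of $\leq$) then places $y$ in the right-hand side.

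For the reverse inclusion I would invoke the adapted metric of Section \ref{sec:adapted-metric}; without loss of generality $\varepsilon \leq \eta$. Fix $y \in \varphi^{-N}\!\left(X^\text{s}_<(\varphi^N(x), \varepsilon)\right)$, so that $d(\varphi^n(x), \varphi^n(y)) < \varepsilon \leq \eta$ for all $n \geq N$. Then for each $k \geq 0$ the point $\varphi^{N+k}(y)$ lies in $X^\text{s}(\varphi^{N+k}(x), \eta)$, so Fried's contraction property applies and yields
\[
d\bigl(\varphi^{N+k+1}(x), \varphi^{N+k+1}(y)\bigr) \leq \lambda\, d\bigl(\varphi^{N+k}(x), \varphi^{N+k}(y)\bigr).
\]
A short induction on $k$ gives $d(\varphi^{N+k}(x), \varphi^{N+k}(y)) \leq \lambda^k \varepsilon$, which tends to zero since $0 < \lambda < 1$, so $y \in X^\text{s}(x)$.

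The principal (and essentially only) obstacle is this reverse inclusion: a uniform bound $d(\varphi^n(x), \varphi^n(y)) < \varepsilon$ for all $n \geq N$ does not in a general metric space force convergence to zero, and indeed in an arbitrary compatible metric on $X$ the distances could remain at a positive constant. The content of the adapted metric is precisely that, on the local stable set of radius $\eta$, the dynamics strictly contracts by a factor of $\lambda$; once this is invoked the induction closes in one line.
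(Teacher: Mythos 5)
Your argument is correct and is essentially the paper's: the paper's proof consists of nothing more than the displayed identity $\varphi^{-N}\bigl(X^\text{s}_<(\varphi^N(x),\varepsilon)\bigr)=\{y \mid d(\varphi^n(x),\varphi^n(y))<\varepsilon \text{ for all } n\geq N\}$ together with the definition of stable equivalence, and you have written out the two inclusions, supplying for the reverse one the contraction estimate from the adapted metric that the paper leaves implicit. One caution: your ``without loss of generality $\varepsilon\leq\eta$'' is not a genuine reduction, since enlarging $\varepsilon$ enlarges the right-hand side; indeed for $\varepsilon$ larger than the diameter of $X$ the right-hand side is all of $X$ and the stated equality fails (e.g.\ in the full $2$-shift), so the restriction $\varepsilon\leq\eta$ (or $\varepsilon\leq\varepsilon_X$, using the standard expansiveness-plus-compactness argument instead of the adapted metric) is actually needed. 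This is an imprecision in the lemma's ``for any $\varepsilon>0$'' that the paper's one-line proof glosses over as well, so it reflects on the statement rather than on your argument, which for admissible $\varepsilon$ is complete.
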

\begin{proof}
In the stable case, this can easily be seen from definition of stable equivalence and the fact discussed above that \[ \varphi^{-N}\left(X^\text{s}_<\left(\varphi^{N}(x), \varepsilon\right)\right) = \left\{ y \in X \mid d\left(\varphi^n(x),\varphi^n(y)\right) < \varepsilon \text{ for all } n \geq N \right\} \] for all $N \geq 0$.
\end{proof}

When $p$ is a periodic point, we topologize $X^\text{s}(p)$ and $X^\text{u}(p)$ with the inductive limit topology coming from Lemma \ref{lem:stable-unstable-inductive-limit-top}, see \cite[Section 4.1]{thomsen2010c}.  Note that with this topology $X^\text{s}(p)$ and $X^\text{u}(p)$ are both locally compact and Hausdorff.

\section{Synchronizing Dynamical Systems} \label{SecSynDS}

\subsection{Synchronizing Elements of an Expansive Dynamical System}

Suppose $(X,\varphi)$ is an expansive dynamical system. Recall from Section \ref{sec:local-stable-unstable} we will be using the adapted metric and that for $0 < \varepsilon \leq \frac{\varepsilon_X}{2}$ we define the set
\[ D_\varepsilon = \{ (x, y) \in X \times X \mid X^\text{s}(x, \varepsilon) \cap X^\text{u}(y, \varepsilon) \neq \emptyset \} \,. \]
Additionally we defined the bracket map $[-,-]:D_\varepsilon \to X$ by $[x,y] \in X^\text{s}(x,\varepsilon) \cap X^\text{u}(y,\varepsilon)$.  As was mentioned in the previous section, these definitions are due to Fried \cite{fried1987}.

\begin{lemma}\label{lem:epsilon-naught-existence}
Suppose $(X,\varphi)$ is an expansive dynamical system.  There exists $\varepsilon_0 > 0$ such that for $\displaystyle 0 < \varepsilon, \varepsilon' \leq \varepsilon_0$, $(x,x)$ is in the interior of $D_\varepsilon$ if and only if it is in the interior of $D_{\varepsilon'}$.
\end{lemma}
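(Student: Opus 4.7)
The plan is to deduce this from Lemma~\ref{lemma:bracket-epsilon-continuity} together with the trivial observation that $\varepsilon \leq \varepsilon'$ forces $X^{\text{s}}(x,\varepsilon) \subseteq X^{\text{s}}(x,\varepsilon')$ and $X^{\text{u}}(x,\varepsilon) \subseteq X^{\text{u}}(x,\varepsilon')$, hence $D_\varepsilon \subseteq D_{\varepsilon'}$ and therefore $\interior(D_\varepsilon) \subseteq \interior(D_{\varepsilon'})$.

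First I would set $\varepsilon_0 = \min\{\eta, \varepsilon_X/2\}$, so that both the bracket map on $D_{\varepsilon_0}$ and the adapted-metric contraction estimates are simultaneously valid for every $0 < \varepsilon \leq \varepsilon_0$. Fix $0 < \varepsilon, \varepsilon' \leq \varepsilon_0$. If $\varepsilon = \varepsilon'$ there is nothing to prove, so by symmetry it suffices to treat the case $\varepsilon < \varepsilon'$. The inclusion $D_\varepsilon \subseteq D_{\varepsilon'}$ immediately gives the forward implication: if $(x,x) \in \interior(D_\varepsilon)$, then $(x,x) \in \interior(D_{\varepsilon'})$.

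For the reverse implication, suppose $(x,x) \in \interior(D_{\varepsilon'})$. Since $\varepsilon' \leq \eta$, the hypothesis of Lemma~\ref{lemma:bracket-epsilon-continuity} is satisfied (with $\varepsilon_0$ of that lemma taken to be our $\varepsilon'$). Applying it to the target radius $\varepsilon$ produces $\delta > 0$ such that whenever $d(x,y) < \delta$ and $d(x,z) < \delta$, the bracket $[y,z]$ is defined and lies in $X^{\text{s}}(y,\varepsilon) \cap X^{\text{u}}(z,\varepsilon)$. In other words, the open product neighborhood $B(x,\delta) \times B(x,\delta)$ of $(x,x)$ is contained in $D_\varepsilon$, so $(x,x) \in \interior(D_\varepsilon)$ as required.

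There is no genuine obstacle here; the entire content is packaged in Lemma~\ref{lemma:bracket-epsilon-continuity}. The only point that needs attention is verifying that $\varepsilon_0$ is chosen small enough so that every $\varepsilon \leq \varepsilon_0$ is a legitimate radius for defining $D_\varepsilon$ (i.e. $\varepsilon \leq \varepsilon_X/2$, ensuring uniqueness of the bracket via Lemma~\ref{lem:bracket-unique-intersection}) and is also a legitimate input to the adapted-metric bound (i.e. $\varepsilon \leq \eta$). Taking $\varepsilon_0 = \min\{\eta,\varepsilon_X/2\}$ handles both constraints at once, and the biconditional then follows by symmetry in $\varepsilon, \varepsilon'$.
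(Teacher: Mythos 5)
Your proof is correct and takes essentially the same route as the paper: both choose $\varepsilon_0 = \min\{\eta, \varepsilon_X/2\}$ and reduce the whole statement to Lemma~\ref{lemma:bracket-epsilon-continuity} (bracket continuity plus the adapted-metric contraction). The only cosmetic difference is that the paper argues symmetrically in $\varepsilon,\varepsilon'$ in a single step, re-running the contraction estimate, so your monotonicity case split, while perfectly valid, is not needed there.
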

\begin{proof}
Let $\displaystyle \varepsilon_0 = \min\left\{\eta, \frac{\varepsilon_X}{2}\right\}$ where $\eta$ is as in Section \ref{sec:adapted-metric}.  Without loss of generality assume $(x,x)$ is in the interior of $D_\varepsilon$, then by continuity we may find $\delta > 0$ such that $\delta \leq \varepsilon'/2$ and small enough that $d(x,y) < \delta$ and $d(x,z) < \delta$ implies $[y,z]$ is defined and $d(x, [y,z]) < \varepsilon'/2$, see Lemma \ref{lemma:bracket-epsilon-continuity}.  Hence \[ d(y,[y,z]) \leq d(y, x) + d(x, [y, z]) < \varepsilon' \] and since $d$ is the adapted metric and $[y,z] \in X^\text{s}(y, \varepsilon)$, for $n \geq 0$, we have \[ d(\varphi^n(y), \varphi^n([y,z])) \leq \lambda^n d(y,[y,z]) < \lambda^n \varepsilon' \leq \varepsilon' \,, \] meaning $[y,z] \in X^\text{s}(y, \varepsilon')$.  Similarly $[y,z] \in X^\text{u}(z, \varepsilon')$, so we can conclude that $(y,z) \in D_{\varepsilon'}$ and $(x,x)$ is in the interior of $D_{\varepsilon'}$.
\end{proof}

\begin{definition}\label{def:synchronizing-point}
Suppose $(X,\varphi)$ is an expansive dynamical system.  A point $x \in X$ is called \emph{synchronizing} if $(x,x)$ is in the interior of $D_\varepsilon$ for some $\displaystyle 0 < \varepsilon \leq \varepsilon_0$, where $\varepsilon_0$ is as in Lemma \ref{lem:epsilon-naught-existence}.  We will denote the subset of synchronizing points $X_\sync$.
\end{definition}

By Lemma \ref{lem:epsilon-naught-existence}, this definition of synchronizing does not depend on the choice of $\varepsilon$.  We now show the crucial property of synchronizing points, which is that they are exactly the points in an expansive dynamical system having local product structure.

\begin{prop}\label{rectangle-nbhd}
Let $\varepsilon_0$ be as in Lemma \ref{lem:epsilon-naught-existence}.  Then an element $x \in X$ is synchronizing if and only if for each $0 < \varepsilon \leq \varepsilon_0$ there is a $\delta_x > 0$ such that for $0 < \delta \leq \delta_x$ we have
\begin{enumerate}[(i)]
    \item $X^\text{u}(x, \delta) \times X^\text{s}(x, \delta) \subseteq D_\varepsilon$, and
    \item $[-,-]$ restricted to $X^\text{u}(x, \delta) \times X^\text{s}(x, \delta)$ is a homeomorphism onto its image, which is a neighborhood of $x$.
\end{enumerate}
\end{prop}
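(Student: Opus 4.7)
The plan is to prove the two directions separately, with Fried's bracket axioms $[a,[b,c]]=[a,c]$ and $[[a,b],c]=[a,c]$ (valid for the expansive bracket whenever both sides are defined, by \cite{fried1987}) doing the algebraic work, and Lemma~\ref{lemma:bracket-epsilon-continuity} doing the analytic work.

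\emph{Forward direction.} Assume $x$ is synchronizing, so by Lemma~\ref{lem:epsilon-naught-existence} we have $(x,x)\in\interior(D_{\varepsilon'})$ for every $0<\varepsilon'\le\varepsilon_0$. Fix a target $0<\varepsilon\le\varepsilon_0$. Apply Lemma~\ref{lemma:bracket-epsilon-continuity} to obtain $\delta_x>0$ (with $\delta_x\le\varepsilon$) such that $d(x,y)<\delta_x$ and $d(x,z)<\delta_x$ imply $[y,z]$ is defined and lies in $X^\text{s}(y,\varepsilon)\cap X^\text{u}(z,\varepsilon)$. Since $X^\text{u}(x,\delta)$ and $X^\text{s}(x,\delta)$ are contained in the closed $\delta$-ball at $x$, condition (i) is immediate for every $0<\delta\le\delta_x$.

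For (ii), I would exhibit an explicit continuous inverse. Define $g(w)=([w,x],[x,w])$ on a small open neighborhood $V$ of $x$; after shrinking $V$, continuity of $[-,-]$ forces $g(V)\subseteq X^\text{u}(x,\delta_x)\times X^\text{s}(x,\delta_x)$. Using B2 and B3, together with the uniqueness observations $[y,x]=y$ for $y\in X^\text{u}(x,\varepsilon)$ and $[x,z]=z$ for $z\in X^\text{s}(x,\varepsilon)$ (each checked against the defining intersection), one gets the identities
\[ [g(w)] = [[w,x],[x,w]] = [w,[x,w]] = [w,w] = w \quad\text{on } V, \]
\[ g([y,z]) = ([[y,z],x],[x,[y,z]]) = ([y,x],[x,z]) = (y,z) \quad\text{on the rectangle.} \]
The rectangle is compact (each factor is closed in $X$) and $X$ is Hausdorff, so the continuous bijection onto its image is a homeomorphism; this image contains the open set $V$ and is therefore a neighborhood of $x$.

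\emph{Backward direction.} Suppose the rectangle property holds and choose a specific $\varepsilon\le\varepsilon_0/2$ with corresponding $\delta_x$. By (ii) there is an open $U\ni x$ with $U\subseteq[X^\text{u}(x,\delta_x)\times X^\text{s}(x,\delta_x)]$. Given any $y,z\in U$, write $y=[y_1,y_2]$ and $z=[z_1,z_2]$ with the $y_i,z_i$ in the rectangle factors. By (i) the bracket $w=[y_1,z_2]$ is defined, and both $w,y\in X^\text{s}(y_1,\varepsilon)$ while both $w,z\in X^\text{u}(z_2,\varepsilon)$. A triangle inequality on the orbit-wise sup conditions then yields $w\in X^\text{s}(y,2\varepsilon)\cap X^\text{u}(z,2\varepsilon)$, so $(y,z)\in D_{2\varepsilon}\subseteq D_{\varepsilon_0}$. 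Hence $U\times U$ is an open neighborhood of $(x,x)$ inside $D_{\varepsilon_0}$, and $x$ is synchronizing.

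The main obstacle I anticipate is the bookkeeping in the forward direction: to legally apply B2 and B3 to iterated brackets such as $[[w,x],[x,w]]$, every intermediate expression must belong to the corresponding $D_{\varepsilon'}$, which forces shrinking $V$ (and possibly $\delta_x$) several times via Lemma~\ref{lemma:bracket-epsilon-continuity}. The other subtle point is confirming that Fried's bracket on expansive systems satisfies the Smale-type axioms we invoke; this is settled in \cite{fried1987}.
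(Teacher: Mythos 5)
Your proposal is essentially the paper's argument, and it is correct modulo one citation you should not lean on as stated. In the forward direction you use the same ingredients as the paper: Lemma \ref{lemma:bracket-epsilon-continuity} to place $X^\text{u}(x,\delta)\times X^\text{s}(x,\delta)$ inside $D_\varepsilon$, and the same candidate inverse $w\mapsto([w,x],[x,w])$. The point to repair is the appeal to B2/B3 ``whenever both sides are defined'' for the expansive bracket: this is precisely the step that needs an $\varepsilon$-margin, since Lemma \ref{lem:bracket-unique-intersection} only gives uniqueness for intersections $X^\text{s}(\cdot,\varepsilon)\cap X^\text{u}(\cdot,\varepsilon)$ with $\varepsilon\le\varepsilon_X/2$, while a composite such as $[[y,z],x]$ a priori lies only in $X^\text{s}(y,2\varepsilon)\cap X^\text{u}(x,\varepsilon)$; at the extreme value $\varepsilon=\varepsilon_X/2$ the expansiveness argument does not close, so the blanket ``whenever defined'' form is not safely quotable from \cite{fried1987}. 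The paper handles this by working in $D_{\varepsilon/2}$ and then deducing the needed identities $[[y,z],x]=[y,x]$, $[x,[y,z]]=[x,z]$, and $[[y,x],[x,y]]=y$ directly from Lemma \ref{lem:bracket-unique-intersection} at level $\varepsilon$, together with the paper's choice of $\delta_x$ guaranteeing that $[y,z]$, $[x,y]$, $[y,x]$, $[x,[y,z]]$, $[[y,z],x]$ are all defined; your own parenthetical (checking $[y,x]=y$ and $[x,z]=z$ against the defining intersections) is exactly this style of argument, so the fix is the bookkeeping you already anticipate rather than a new idea. Two smaller differences: you get the homeomorphism from compactness of the rectangle plus injectivity, whereas the paper exhibits the continuous two-sided inverse $h_x(y)=([y,x],[x,y])$ explicitly --- both work; and your backward direction (decompose $y=[y_1,y_2]$, $z=[z_1,z_2]$ in the rectangle, check $w=[y_1,z_2]\in X^\text{s}(y,2\varepsilon)\cap X^\text{u}(z,2\varepsilon)$, hence $U\times U\subseteq D_{2\varepsilon}$ with $2\varepsilon\le\varepsilon_0$) is in fact more detailed than the paper's, which dismisses this implication as clear; your choice $\varepsilon\le\varepsilon_0/2$ is what makes it legitimate, since being synchronizing only requires $(x,x)\in\interior(D_{\varepsilon'})$ for some $0<\varepsilon'\le\varepsilon_0$.
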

\begin{proof}
First we assume $x \in X$ is synchronizing.  Fix $0 < \varepsilon \leq \varepsilon_0$.  Since for any $\displaystyle 0 < \varepsilon \leq \varepsilon_0$ we know $(x,x)$ is also on the interior of $D_{\varepsilon/2}$, there is an $\varepsilon'$ such that if $d(x,y) < \varepsilon'$ and $d(x,z) < \varepsilon'$ then $(y,z), (z, y) \in D_{\varepsilon/2}$.  Let $0 < \delta_x < \varepsilon'$ be such that $d(x,y) \leq \delta_x$ and $d(x,z) \leq \delta_x$ implies $d(x,[y,z]) < \varepsilon'$.  As a consequence of this, we have that $[y,z]$, $[x,z]$, $[x,y]$, $[x,[y,z]]$, and $[[y,z],x]$ are all defined for $D_{\varepsilon/2}$.  In particular it is true that $X^\text{u}(x, \delta) \times X^\text{s}(x, \delta) \subseteq D_\varepsilon$ for any choice of $0 < \delta \leq \delta_x$.

We will also need the following fact.  Since $[y,z] \in X^\text{s}(y, \varepsilon/2)$ and $[[y,z],x] \in X^\text{s}([y,z], \varepsilon/2)$, we have
\[ d(\varphi^n([[y,z],x]), \varphi^n(y)) \leq d(\varphi^n([[y,z],x]), \varphi^n([y,z])) + d(\varphi^n([y,z]), \varphi^n(y)) \leq \varepsilon \]
for all $n \geq 0$.  This means $[[y,z],x] \in X^\text{s}(y, \varepsilon) \cap X^\text{u}(x, \varepsilon)$ and so $[[y,z],x] = [y,x]$ by Lemma \ref{lem:bracket-unique-intersection}.  By similar reasoning we also have $[x,[y,z]] = [x,z]$.

Fix $0 < \delta \leq \delta_x$.  Let $[-,-]_x$ denote the bracket map $[-,-]$ restricted to $X^\text{u}(x, \delta) \times X^\text{s}(x, \delta)$, and let $R$ denote the image of $[-,-]_x$.  First we remark that if $y \in X^\text{u}(x, \delta)$ then $y \in X^\text{s}(y, \varepsilon) \cap X^\text{u}(x, \varepsilon)$ and so $y = [y,x]$, and similarly $z \in X^\text{s}(x, \delta)$ implies $z = [x,z]$.  Define a function $h_x : R \to X^\text{u}(x, \delta) \times X^\text{s}(x, \delta)$ by $h_x(y) = ([y,x],[x,y])$.  To see that $h_x$ is well-defined on $R$ and indeed a right inverse of $[-,-]_x$, we compute
\[ h_x([y,z]) = ([[y,z], x], [x, [y, z]]) = ([y,x], [x,z]) = (y, z) \,. \] Note also that the continuity of $h_x$ follows from the continuity of $[-,-]$.  Lastly we must check that $h_x$ is also a left inverse of $[-,-]_x$ by computing
\[ ([-,-] \circ h_x)(y) = [[y,x],[x,y]] = y \,. \]  The last equality follows from the fact that $y \in X^\text{s}([y,x], \varepsilon) \cap X^\text{u}([x,y], \varepsilon)$.  We conclude that $[-,-]_x : X^\text{u}(x, \delta) \times X^\text{s}(x, \delta) \to R$ is indeed a homeormorphism.

Furthermore it is clear that $R$ is a neighborhood of $x$ since for $y$ close enough to $x$ we have that $[y,x]$ and $[x,y]$ are defined and in $X^\text{u}(x, \delta)$ and $X^\text{s}(x, \delta)$ respectively, implying $y$ is in $R$.

Conversely, if for any $0 < \varepsilon \leq \varepsilon_0$ there is a $\delta_x$ such that $[-,-] : X^\text{u}(x, \delta) \times X^\text{s}(x, \delta) \to R$ is a homeomorphism for any $0 < \delta \leq \delta_x$, it is clear that $X^\text{u}(x, \delta) \times X^\text{s}(x, \delta) \subseteq D_\varepsilon$ is a neighborhood of $(x,x)$ in $D_\varepsilon$, hence $x$ is synchronizing.
\end{proof}

We will often call the neighborhood $R$ in the proof of Proposition \ref{rectangle-nbhd} a \emph{product neighborhood} (or \emph{rectangular neighborhood}) of $x$.  Alternatively we will say that $x$ has \emph{local product structure}.




\subsection{The Definition of Synchronizing Dynamical Systems}

\begin{definition}\label{def:synchronizing-general}
An expansive dynamical system $(X,\varphi)$ is called \emph{synchronizing} if it is irreducible and it has at least one synchronizing point.  Let the set of synchronizing points in $X$ be denoted $X_\text{sync}$.  Hence an irreducible expansive system $(X,\varphi)$ is called synchronizing if $X_\text{sync} \neq \emptyset$.
\end{definition}

\begin{lemma}\label{lemma:sync-invariant-under-map}
Let $(X,\varphi)$ be an expansive system and $x \in X$.  Then $x$ is synchronizing if and only if $\varphi(x)$ is synchronizing.
\end{lemma}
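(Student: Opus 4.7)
The plan is to use the characterization of synchronizing points via the interior of the sets $D_\varepsilon$ (Definition \ref{def:synchronizing-point}) combined with Lemma \ref{lem:epsilon-naught-existence}, which allows us to shrink $\varepsilon$ freely within $(0,\varepsilon_0]$. The main technical step is to track how $\varphi \times \varphi$ acts on $D_\varepsilon$.

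First I would establish the following key inclusion: there is a constant $K > 1$ (the Lipschitz constant of $\varphi$ and $\varphi^{-1}$ with respect to the adapted metric, see Section \ref{sec:adapted-metric}) such that $(\varphi \times \varphi)(D_\varepsilon) \subseteq D_{K\varepsilon}$ whenever $K\varepsilon \le \varepsilon_X/2$. The verification is a direct unpacking of the definitions: if $(y,z) \in D_\varepsilon$ then $[y,z] \in X^\mathrm{s}(y,\varepsilon) \cap X^\mathrm{u}(z,\varepsilon)$, and one shows that $\varphi[y,z]$ lies in $X^\mathrm{s}(\varphi(y),\varepsilon) \cap X^\mathrm{u}(\varphi(z),K\varepsilon)$. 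The stable inclusion is immediate from shifting the index $n \mapsto n+1$, while the unstable inclusion uses the Lipschitz bound $d(\varphi(z),\varphi[y,z]) \le K\,d(z,[y,z]) \le K\varepsilon$ to handle the single new index $n = 0$, with the rest of the orbit indices being handled by shifting.

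Next, to prove the forward direction, assume $x$ is synchronizing. By Lemma \ref{lem:epsilon-naught-existence} I may choose $\varepsilon > 0$ so small that both $\varepsilon \le \varepsilon_0$ and $K\varepsilon \le \varepsilon_0$, and so that $(x,x)$ lies in the interior of $D_\varepsilon$. Pick an open neighborhood $U$ of $(x,x)$ contained in $D_\varepsilon$. Since $\varphi \times \varphi$ is a homeomorphism of $X \times X$, the image $(\varphi \times \varphi)(U)$ is an open neighborhood of $(\varphi(x),\varphi(x))$, and by the inclusion above it is contained in $D_{K\varepsilon}$. Therefore $(\varphi(x),\varphi(x)) \in \interior(D_{K\varepsilon})$ with $K\varepsilon \le \varepsilon_0$, so $\varphi(x)$ is synchronizing. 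The reverse direction is identical, applied to $\varphi^{-1}$ in place of $\varphi$, using that $\varphi^{-1}$ is also Lipschitz with constant $K$ and that the roles of stable and unstable sets are symmetric under $\varphi \mapsto \varphi^{-1}$.

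The only subtle point is the choice of constants: one must confirm that $K\varepsilon$ can be made to lie in $(0, \varepsilon_0]$, which is possible because $\varepsilon_0$ was chosen uniformly in Lemma \ref{lem:epsilon-naught-existence} and synchronizing membership does not depend on the particular value of $\varepsilon \in (0,\varepsilon_0]$. No real obstacle arises; the argument is essentially a transport-of-structure computation under the homeomorphism $\varphi \times \varphi$, with the Lipschitz property of the adapted metric absorbing the slight loss in the unstable direction.
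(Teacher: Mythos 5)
Your proof is correct. The computation behind your key inclusion $(\varphi \times \varphi)(D_\varepsilon) \subseteq D_{K\varepsilon}$ is sound: the stable estimate follows by shifting the index, the unstable estimate only needs the Lipschitz bound at the single new index $n=0$, and since $K\varepsilon \leq \varepsilon_0 \leq \varepsilon_X/2$ the set $D_{K\varepsilon}$ and its bracket are defined; transporting a neighborhood of $(x,x)$ by the homeomorphism $\varphi \times \varphi$ then gives $(\varphi(x),\varphi(x)) \in \interior(D_{K\varepsilon})$, and the reverse direction by symmetry under $\varphi \mapsto \varphi^{-1}$ is legitimate since the adapted metric and $\varepsilon_0$ serve both $\varphi$ and $\varphi^{-1}$ with stable and unstable roles exchanged. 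Your route differs from the paper's in how the scale is controlled. The paper argues through the product-neighborhood characterization of Proposition \ref{rectangle-nbhd}: it pulls points $(z,z')$ near $\varphi(x)$ back by $\varphi^{-1}$ into a rectangle around $x$, pushes the bracket forward, and keeps everything at scale $\varepsilon$ by shrinking $\delta_x$ and $\delta_y$ so that $\varphi(X^\text{u}(\cdot,\delta_x)) \subseteq X^\text{u}(\varphi(\cdot),\varepsilon)$; as a by-product it identifies $[z,z'] = \varphi([\varphi^{-1}(z),\varphi^{-1}(z')])$ explicitly, an identity in the same spirit as the later argument for Proposition \ref{prop:lc-preserves-sync}. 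You instead accept the loss from $\varepsilon$ to $K\varepsilon$ and let Lemma \ref{lem:epsilon-naught-existence} (independence of the choice of $\varepsilon \in (0,\varepsilon_0]$) absorb it, which lets you work with the sets $D_\varepsilon$ directly, bypass Proposition \ref{rectangle-nbhd} and the $\delta$-bookkeeping entirely, and reduce the lemma to a one-line transport of interiors; the trade-off is that you get only membership in $D_{K\varepsilon}$ rather than the explicit formula for the bracket near $\varphi(x)$, which the paper's version hands you for free.
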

\begin{proof}
Assume $x$ is synchronizing with $\varepsilon$, $\delta_x$, and $R$ as in Proposition \ref{rectangle-nbhd}, and furthermore that $\delta_x$ is also small enough that $\varphi(X^\text{u}(z,\delta_x)) \subseteq X^\text{u}(\varphi(z), \varepsilon)$ for all $z \in R$.  Let $y = \varphi(x)$, and let $\delta_y > 0$ be small enough that $(z,z') \in X^\text{u}(y,\delta_y) \times X^\text{s}(y,\delta_y)$ implies $(\varphi^{-1}(z), \varphi^{-1}(z')) \in X^\text{u}(x, \delta_x) \times X^\text{s}(x, \delta_x)$.  Then
\begin{align*}
    \varphi\left([\varphi^{-1}(z), \varphi^{-1}(z')]\right) &\in \varphi\left( X^\text{s}(\varphi^{-1}(z), \delta_x) \cap X^\text{u}(\varphi^{-1}(z'), \delta_x) \right) \\
    &= \varphi\left(X^\text{s}(\varphi^{-1}(z), \delta_x)\right) \cap \varphi\left(X^\text{u}(\varphi^{-1}(z'), \delta_x)\right) \\
    &\subseteq X^\text{s}(z, \varepsilon) \cap X^\text{u}(z', \varepsilon)
\end{align*}
meaning $[z,z'] = \varphi\left([\varphi^{-1}(z), \varphi^{-1}(z')]\right)$, so $y$ is synchronizing.  Note that $\varphi^{-1}(z') \in R$ since $[x,\varphi^{-1}(z')] = \varphi^{-1}(z')$, so the final step above is justified.  That $\varphi(x)$ is synchronizing implies $x$ is synchronizing is proved in a similar way, we omit the details.
\end{proof}

\begin{prop}\label{theorem:sync-points-dense}
If $(X,\varphi)$ is synchronizing then $X_\text{sync}$ is a dense open set in $X$.
\end{prop}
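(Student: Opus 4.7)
The plan is to prove openness and density separately, with openness essentially formal and density using irreducibility together with Lemma \ref{lemma:sync-invariant-under-map}.

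For openness, I would argue as follows. If $x \in X_\sync$, then by Definition \ref{def:synchronizing-point} there is some $0 < \varepsilon \leq \varepsilon_0$ with $(x,x) \in \interior(D_\varepsilon)$, so there is an open set $W \subseteq D_\varepsilon$ of $X \times X$ containing $(x,x)$. The set $U = \{y \in X : (y,y) \in W\}$ is the preimage of $W$ under the continuous diagonal embedding $y \mapsto (y,y)$, so $U$ is open and contains $x$. For any $y \in U$, the point $(y,y)$ lies in the open set $W \subseteq D_\varepsilon$, hence $(y,y) \in \interior(D_\varepsilon)$ and $y \in X_\sync$. Thus $U \subseteq X_\sync$, which shows $X_\sync$ is open.

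For density, let $V \subseteq X$ be any non-empty open set; I want to find a synchronizing point in $V$. Fix any $x_0 \in X_\sync$ (which exists by the definition of a synchronizing system); by the openness proved above, there is an open neighborhood $U$ of $x_0$ contained in $X_\sync$. Applying the irreducibility of $(X,\varphi)$ to the ordered pair $(V, U)$ gives some $n > 0$ with $\varphi^n(V) \cap U \neq \emptyset$, so there exists $y \in V$ with $\varphi^n(y) \in U \subseteq X_\sync$. By iterating Lemma \ref{lemma:sync-invariant-under-map}, synchronizing is preserved under both $\varphi$ and $\varphi^{-1}$, so $y \in X_\sync$ as well. Hence $V \cap X_\sync \neq \emptyset$, and density follows.

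The whole argument is quite short; no step presents a serious obstacle, because the heavy lifting has already been done in Proposition \ref{rectangle-nbhd} (which gives the product structure near any synchronizing point) and in Lemma \ref{lemma:sync-invariant-under-map} (which gives $\varphi$-invariance of $X_\sync$). The only mild subtlety is recognizing that the definition in terms of $(x,x) \in \interior(D_\varepsilon)$ is manifestly an open condition on $x$, which is why openness is essentially immediate without needing to manipulate the bracket directly.
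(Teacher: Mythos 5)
Your proof is correct and follows essentially the same route as the paper: openness of $X_\sync$, then density from $\varphi$-invariance (Lemma \ref{lemma:sync-invariant-under-map}) together with irreducibility and non-emptiness. The only cosmetic differences are that you get openness directly from the definition via continuity of the diagonal map $y \mapsto (y,y)$ (the paper cites Proposition \ref{rectangle-nbhd}), and you apply the definition of irreducibility to the pair $(V,U)$ rather than invoking the dense-orbit characterization of irreducibility; both variations are fine.
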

\begin{proof}
The set of synchronizing points, $X_\sync$, is open by Proposition \ref{rectangle-nbhd}, and non-empty by definition.  Since $X$ is irreducible it has a point with a dense orbit, see the discussion after Definition \ref{def:Irreducible}. Since $X_\sync$ contains this orbit by Lemma \ref{lemma:sync-invariant-under-map}, $X_\sync$ is also dense.
\end{proof}

\subsection{Periodic Points in Synchronizing Systems}
Our goal in this section is to prove that the set of periodic points is dense for a synchronizing system.  The proof of Theorem \ref{theorem:periodic-existence} is new, as for Smale spaces one uses the shadowing property which no longer holds in the generality of synchronizing systems, see \cite[Theorem 1]{sakai01}.  We instead show that the existence of periodic points is a direct consequence of the existence of rectangular neighborhoods.

First we must prove the following lemma adapted from an argument by Sakai in \cite[Theorem 1]{sakai01}.  One can find a detailed proof in \cite{AndrewPhDThesis}.  In Lemma \ref{lem:bracket-lipschitz} note that the constant $C$ only depends on the constants $\lambda$ and $K$ associated to the adapted metric.

\begin{lemma}{\cite{sakai01}}\label{lem:bracket-lipschitz}
Let $(X,\varphi)$ be an expansive dynamical system.  Then there is a constant $C > 0$ such that for any synchronizing point $x \in X$ there exists a neighborhood $U$ of $x$ where $d(y, [y,z]) \leq C d(y, z)$ and $d(z, [y,z]) \leq C d(y, z)$ for all $y, z \in U$.
\end{lemma}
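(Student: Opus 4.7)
The idea is to establish the Lipschitz bound for the bracket near a synchronizing point $x$ by combining the local product structure (Proposition \ref{rectangle-nbhd}) with the quantitative estimates afforded by the adapted metric. First, for a fixed $0 < \varepsilon \leq \varepsilon_0$, Proposition \ref{rectangle-nbhd} supplies $\delta_x > 0$ and a product neighborhood of $x$ on which the bracket is a homeomorphism. I would then pass to a smaller open neighborhood $U$ of $x$ so that for all $y, z \in U$ the brackets $[y,z]$ and $[\varphi^{\pm 1} y, \varphi^{\pm 1} z]$ are defined and the $\varphi$-equivariance $\varphi[y,z] = [\varphi y, \varphi z]$ holds. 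These shrinkages depend on $x$ only through the size of its product neighborhood, not through any constant that will enter the final Lipschitz bound.

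For $y, z \in U$, let $w = [y,z]$ and set $a = d(y,w)$, $b = d(z,w)$, $c = d(y,z)$. The adapted metric yields the shadowing estimates
\[ d(\varphi^n y, \varphi^n w) \leq \lambda^n a \quad \text{and} \quad d(\varphi^{-n} z, \varphi^{-n} w) \leq \lambda^n b \qquad (n \geq 0), \]
while the global $K$-Lipschitz property of $\varphi^{\pm 1}$ gives $d(\varphi^n y, \varphi^n z) \leq K^{|n|} c$ for all $n \in \Z$. Thus the orbit of $w$ shadows the orbit of $y$ forward in time and the orbit of $z$ backward in time. The plan is to combine these estimates, via the triangle inequality applied at an optimally chosen iterate of $\varphi$, to extract bounds $a, b \leq C c$ with $C$ depending only on $\lambda$ and $K$. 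This is exactly Sakai's strategy in \cite[Theorem 1]{sakai01}, and I would follow the detailed adaptation worked out in \cite{AndrewPhDThesis}. Since $\lambda$ and $K$ are global constants of $(X,\varphi)$, the resulting constant $C$ is automatically uniform across synchronizing points.

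The main obstacle is the balancing step, because in general $K\lambda \geq 1$ for a non-trivial expansive system: if some pair of distinct points is both locally stably and unstably equivalent, then applying the adapted-metric contraction $\lambda$ and the global Lipschitz bound $K$ once to recover the original distance forces $K\lambda \geq 1$. Consequently a single-step iteration can only recover the triangle inequality $a \leq b + c$, $b \leq a + c$, and one must pit the forward stable contraction from $y$ against the backward unstable contraction from $z$ at the same iterate, interpolating with the Lipschitz bound on $d(\varphi^n y, \varphi^n z)$, so that the final constant depends only on $\lambda$ and $K$ and not on $\varepsilon$ or on the scale of the product neighborhood. Organizing this bookkeeping so that the estimate closes with a universal $C$ is the technical heart of the argument.
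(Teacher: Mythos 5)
Your reduction to a product neighborhood of $x$ and the bookkeeping $a=d(y,w)$, $b=d(z,w)$, $c=d(y,z)$ with $w=[y,z]$ is the right starting point (the paper itself records no argument for this lemma beyond citing Sakai and \cite{AndrewPhDThesis}), but the proposal has a genuine gap exactly at the step you defer: all the dynamical estimates you put on the table are one-sided upper bounds (forward contraction of the pair $(y,w)$, backward contraction of $(z,w)$, and $K^{|n|}$-Lipschitz control of $(y,z)$), and the only lower bound they supply is the crude $d(\varphi^{-n}(y),\varphi^{-n}(w))\ge K^{-n}a$. Since $K\lambda\ge 1$, combining these at any iterate $n$ produces inequalities of the form $a\,(K^{-n}-\lambda^{n})\le (K^{n}+\lambda^{n})\,c$, which are vacuous, and optimizing $n$ against the scale $\eta$ yields at best a H\"older-type bound $a\lesssim c^{\theta}$ with $\theta=\log(1/\lambda)/\log K$, not a Lipschitz bound. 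So ``triangle inequality at an optimally chosen iterate, interpolating with the Lipschitz bound'' does not close, and your claim that the constant is ``automatically uniform'' is an assertion rather than a consequence of anything written. Your diagnosis is also backwards: the obstacle is not that a single-step argument is too weak, it is that you are missing the reverse (expansion) inequality that the adapted metric provides, after which a single step suffices.

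Here is the missing idea. Using Lemma \ref{lemma:bracket-epsilon-continuity}, shrink $U$ so that $w=[y,z]\in X^\text{s}(y,\varepsilon)\cap X^\text{u}(z,\varepsilon)$ with $K\varepsilon\le\eta$. Then $d(\varphi^{-1}(y),\varphi^{-1}(w))\le Ka\le\eta$, so $\varphi^{-1}(w)\in X^\text{s}(\varphi^{-1}(y),\eta)$, and applying the adapted-metric contraction to this \emph{shifted} pair gives the lower bound $d(\varphi^{-1}(y),\varphi^{-1}(w))\ge\lambda^{-1}a$; combined with $d(\varphi^{-1}(y),\varphi^{-1}(w))\le d(\varphi^{-1}(y),\varphi^{-1}(z))+d(\varphi^{-1}(z),\varphi^{-1}(w))\le Kc+\lambda b$ this yields $\lambda^{-1}a\le Kc+\lambda b$. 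Symmetrically, $\varphi(w)\in X^\text{u}(\varphi(z),\eta)$ gives $\lambda^{-1}b\le Kc+\lambda a$. Adding the two inequalities and absorbing the $\lambda(a+b)$ term gives
\[
a+b\;\le\;\frac{2K\lambda}{1-\lambda^{2}}\,c ,
\]
so the lemma holds with $C=2K\lambda/(1-\lambda^{2})$, a constant depending only on $\lambda$ and $K$ exactly as the paper remarks, and uniform over synchronizing points because only the neighborhood $U$, never $C$, depends on $x$. This one-step balancing of contraction against expansion (rather than an optimization over iterates) is the technical heart you left to the citation, and without it the proof is not complete.
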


\begin{theorem}\label{theorem:periodic-existence}

Let $(X,\varphi)$ be an non-wandering expansive dynamical system and $x \in X$ a synchronizing point.  Then for any neighborhood $U$ of $x$ there is a periodic point in $U$.
\end{theorem}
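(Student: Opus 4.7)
The plan is to mimic the classical hyperbolic fixed-point argument in the local product chart at $x$, with the Lipschitz bound from Lemma \ref{lem:bracket-lipschitz} standing in for the ambient smoothness one would use in the Smale space setting. First, using Proposition \ref{rectangle-nbhd}, I shrink $U$ to a rectangular neighborhood $R = [X^\text{u}(x, \delta), X^\text{s}(x, \delta)] \subseteq U$ with $\delta > 0$ small enough that the Lipschitz constant $C$ from Lemma \ref{lem:bracket-lipschitz} is valid on $R$ and every bracket appearing below lies in the domain $D_\varepsilon$. I then fix a positive integer $n$ large enough that $C^2 \lambda^n < 1/2$, where $\lambda < 1$ is the contraction constant of the adapted metric from Section \ref{sec:adapted-metric}.

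Next I invoke non-wandering. On a compact non-wandering system, the set of return times $\{k \geq 1 : \varphi^k(R) \cap R \neq \emptyset\}$ to a non-empty open set is unbounded, so there exists $y \in R$ with $\varphi^n(y) \in R$ for some return time past the threshold fixed above. By further shrinking $\delta$ and then reapplying non-wandering, I arrange that $y$ and $\varphi^n(y)$ both lie in an arbitrarily small sub-rectangle $R_0 \subset R$ around $x$, which keeps later bracket operations and iterates $\varphi^{\pm n}$ within their domains.

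The heart of the argument is a contraction $F$ on $R_0$ whose fixed point is the desired periodic point. In product coordinates $z = [u, s]$ with $u = [z, x] \in X^\text{u}(x, \delta)$ and $s = [x, z] \in X^\text{s}(x, \delta)$, the map $\varphi^n$ acts as $T(u, s) = (T_u(u, s), T_s(u, s))$, expanding in $u$ at rate $\lambda^{-n}$ and contracting in $s$ at rate $\lambda^n$. I build the classical hyperbolic operator $F(u, s) = (T_u^{-1}(u, s), T_s(u, s))$, where $T_u^{-1}(\cdot, s)$ is the inverse of the unstable-coordinate projection of $\varphi^n$ and is thus contracting at rate $\lambda^n$ in the formerly expanding direction. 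Combining the two $\lambda^n$-contractions with the bracket Lipschitz constant $C$ yields, in the max metric on $X^\text{u}(x,\delta) \times X^\text{s}(x,\delta)$, the estimate $d(F(z), F(z')) \leq C^2 \lambda^n\, d(z, z') < \frac{1}{2}\, d(z, z')$. Banach's theorem produces a unique fixed point $p \in R_0$, and by Lemma \ref{lem:bracket-unique-intersection} the identity $F(p) = p$ is equivalent to $T(u(p), s(p)) = (u(p), s(p))$, hence to $\varphi^n(p) = p$. Since $p \in R_0 \subseteq U$, this is the periodic point we want.

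The principal obstacle is the order-of-quantifiers bookkeeping. The Lipschitz constant $K^n$ of $\varphi^n$ from Section \ref{sec:adapted-metric} forces $\delta$ to shrink exponentially in $n$ so that $\varphi^{\pm n}$ applied to any point of $R_0$ stays within bracket-range of the anchor points built from $y$ and $\varphi^n(y)$; consequently one must first pick $n$ satisfying $C^2\lambda^n < 1/2$, then shrink $\delta$ depending on $n$ to secure the domain conditions, and only then select the return point from the non-wandering hypothesis. This reordering does not affect the contraction rate $C^2 \lambda^n$ of $F$, but it is the step where the argument genuinely uses the \emph{openness} of the synchronizing condition at $x$ rather than its mere pointwise definition.
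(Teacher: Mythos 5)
Your contraction--mapping scheme has a genuine gap at the point where $F$ is supposed to be a well-defined self-map of a chart of controlled size. The product structure exists only near $x$; nothing constrains the orbit segment $\varphi^k(z)$, $0<k<n$, of a general point $z$ in your rectangle, so neither $\varphi^n(z)$ nor $\varphi^{-n}(z)$ need lie anywhere near $x$, and the ``coordinates'' $T_s(u,s)$ and $T_u(u,s)$ are simply undefined; worse, $T_u^{-1}$ presupposes that $\varphi^n$ maps unstable fibres \emph{across} the rectangle (a Markov-type crossing property), which is exactly the consequence of shadowing that fails in this generality (the paper cites \cite{sakai01} precisely for this). Your proposed repair is circular: you fix $n$ with $C^2\lambda^n<1/2$, shrink $\delta\lesssim K^{-n}$ to force $\varphi^{\pm n}$ of the chart into bracket range, and \emph{then} invoke non-wandering --- but non-wandering applied to $B_\delta(x)$ only returns at some time $m$ that is determined \emph{after} $\delta$ is chosen and typically grows as $\delta$ shrinks, while your crude Lipschitz control needs $\delta\lesssim K^{-m}$ for the actual return time $m$. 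There is also no admissible centre for the exponentially small chart known in advance: $x$ is not known to recur at time $m$, $y$ controls only the forward image and $\varphi^m(y)$ only the backward one; the only point whose images under both $\varphi^{m}$ and $\varphi^{-m}$ are controlled is the bracket $[y,\varphi^m(y)]$, and that control comes from the adapted metric along stable/unstable sets, not from the $K^m$ bound you are relying on.

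This is exactly how the paper's proof sidesteps the problem: it never applies $\varphi^{\pm n}$ to a whole neighborhood, only to the single points of the recursion $z_{m+1}=[\varphi^{-n}(z_m),\varphi^n(z_m)]$, each of which is constructed to lie on a local stable set of one controlled point and a local unstable set of another, so the adapted-metric contraction (Lemma \ref{lem:bracket-lipschitz} plus Section \ref{sec:adapted-metric}), not $K^n$, governs where $\varphi^{\pm n}(z_m)$ lands, and the ball $B_\delta(x)$ has radius independent of $n$. Moreover the paper never needs this map to be a contraction or even a self-map: the induction shows the iterates stay in $B_\delta(x)$ and that $d(\varphi^{-n}(z_m),\varphi^n(z_m))\to 0$ geometrically, and compactness then yields a limit point $p\in U$ with $\varphi^{-n}(p)=\varphi^n(p)$, i.e.\ a point of period $2n$ --- no Banach fixed point, no invertibility of an unstable projection. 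Note that the bracket-level analogue of your $F$ (unstable data from $\varphi^{-n}(z)$, stable data from $\varphi^{n}(z)$) is precisely the paper's recursion map; the substance of the proof is the inductive bookkeeping that keeps its orbit defined inside a ball whose size does not depend on $n$, and that bookkeeping is what your argument is missing.
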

\begin{proof}
By Lemma \ref{lem:bracket-lipschitz}, there is a constant $C$ and neighborhood $\widetilde{U}$ of $x$ such that $d(y, [y,z]) \leq C d(y, z)$ and $d(z, [y,z]) \leq C d(y, z)$ for all $y, z \in \widetilde{U}$.

Fix $\displaystyle 0 < \widetilde{C} < \frac{1}{2+4C}$.  Let $R$ be a product neighborhood of $x$ and let $\delta > 0$ such that $\overline{B_\delta(x)} \subseteq R \cap U \cap \widetilde{U}$.  Let $\displaystyle \delta' = \widetilde{C}\delta$ and $V = B_{\delta'}(x)$, noting that $\displaystyle \delta' < \frac{\delta}{2}$, so $V \subseteq B_\delta(x)$.  Since $x$ is non-wandering, there is an $n$ such that $\varphi^n(V) \cap V \neq \emptyset$.  Furthermore we assume that $n$ is such that \[ \lambda^n < \frac{1}{4C} \] and we set $\displaystyle \beta = 2C\lambda^n < \frac{1}{2}$.

Hence there is a $y \in V$ such that $\varphi^n(y) \in V$.  Set $z_0 = [y, \varphi^n(y)]$, which is well-defined since $y, \varphi^n(y) \in V \subseteq R$.  Then from Lemma \ref{lem:bracket-lipschitz} we have \[ z_0 \in X^\text{s}(y, 2C\delta') \cap X^\text{u}(\varphi^n(y), 2C\delta') \,. \label{eq:periodic-induction-z0} \tag{$\ast$} \]
Using this we get an estimate on $d(x,z_0)$,
\begin{align*}
    d(x, z_0) &\leq d(x, y) + d(y, z_0) \\
    &< \delta' + C d(y, \varphi^n(y)) \\
    &< \delta' + C \diam(V) \\
    &= (1 + 2C) \delta' \\
    &< \frac{1+2C}{2+4C}\delta \\
    &= \frac{\delta}{2}
\end{align*}
by our choice of $\widetilde{C}$.  Hence $z_0 \in B_\delta(x) \subseteq \widetilde{U}$.  It follows from \eqref{eq:periodic-induction-z0} and the adapted metric that
\begin{align*}
    \varphi^n(z_0) &\in X^\text{s}(\varphi^n(y), \beta \delta') \\
    \varphi^{-n}(z_0) &\in X^\text{u}(y, \beta \delta') \,.
\end{align*}
We then have \[ d(x, \varphi^n(z_0)) \leq d(x, \varphi^n(y)) + d(\varphi^n(y), \varphi^n(z_0)) < (1 + \beta) \delta' \,. \]  A similar computation shows we also have $\displaystyle d(x, \varphi^{-n}(z_0)) < (1 + \beta) \delta'$.  Since $(1+\beta)\delta' < \delta$ we may define $z_1 = [\varphi^{-n}(z_0), \varphi^n(z_0)]$.  Then, because $d(\varphi^{-n}(z_0), \varphi^n(z_0)) < 2(1 + \beta) \delta'$, we can apply Lemma \ref{lem:bracket-lipschitz} again to obtain \[ z_1 \in X^\text{s}(\varphi^{-n}(z_0), 2C(1+\beta)\delta') \cap X^\text{u}(\varphi^n(z_0), 2C(1+\beta)\delta') \,. \]  Lastly, we obtain the following estimate on $d(x,z_1)$,
\begin{align*}
    d(x,z_1) &\leq d(x, \varphi^n(z_0)) + d(\varphi^n(z_0), z_1) \\
    &< (1+\beta)\delta' + 2C(1+\beta)\delta' \\
    &= (1+\beta)(1+2C)\delta' \\
    &< (1+\beta)\frac{\delta}{2} \\
    &< \frac{3\delta}{4}
\end{align*}
using $\displaystyle \beta < \frac{1}{2}$ and $\displaystyle (1 + 2C)\delta' < \frac{\delta}{2}$.

Our goal with this proof is to define a sequence $\{ z_m \}_{m \geq 0}$ from which we will obtain a subsequence converging to a periodic point in $U$.  This sequence will be defined recursively by \[ z_{m + 1} = [\varphi^{-n}(z_m), \varphi^n(z_m)] \] for all $m \geq 0$, with $z_0$ and $z_1$ already constructed above.  The proof will proceed by induction as follows, with the base case already having been shown.

Fix $m \geq 1$.  Suppose \[ z_m \in X^\text{s}\big(\varphi^{-n}(z_{m-1}), 2C(\beta^{m-1} + \beta^m)\delta'\big) \cap X^\text{u}\big(\varphi^n(z_{m-1}), 2C(\beta^{m-1} + \beta^m)\delta'\big) \label{eq:periodic-induction-A} \tag{A} \] and \[ d(x, z_i) < \left( \sum_{k=1}^{i + 1} \frac{1}{2^k} \right) \delta \label{eq:periodic-induction-B} \tag{B} \] for all $0 \leq i \leq m$.  In particular, $z_m = [\varphi^{-n}(z_{m-1}), \varphi^n(z_{m-1})]$ is well-defined.  We will show that both \eqref{eq:periodic-induction-A} and \eqref{eq:periodic-induction-B} hold for $m + 1$.

From \eqref{eq:periodic-induction-A} and the adapted metric we have
\begin{align*}
    \varphi^n(z_m) &\in X^\text{s}\big(z_{m-1}, (\beta^m + \beta^{m+1}) \delta'\big) \\
    \varphi^{-n}(z_m) &\in X^\text{u}\big(z_{m-1}, (\beta^m + \beta^{m+1}) \delta'\big) \,.
\end{align*}
It then follows that
\begin{align*}
   d(x, \varphi^n(z_m)) &\leq d(x, z_{m-1}) + d(z_{m-1}, \varphi^n(z_m)) \\
   &< d(x, z_{m-1}) + (\beta^m + \beta^{m+1}) \delta' \\
   &< d(x, z_{m-1}) + \left(\frac{1}{2^m} + \frac{1}{2^{m+1}}\right) \frac{\delta}{2} \\
   &< \left( \sum_{k=1}^{m+2} \frac{1}{2^k} \right) \delta \\
   &< \delta \,.
\end{align*}
Above we are using the facts that $\displaystyle \beta < \frac{1}{2}$ and $\displaystyle \delta' < \frac{\delta}{2}$.  Similarly one can show $d(x, \varphi^{-n}(z_m)) < \delta$.  Hence $z_{m+1} = [\varphi^{-n}(z_m), \varphi^n(z_m)]$ is defined.  Since \[ d(\varphi^{-n}(z_m), \varphi^n(z_m)) \leq d(\varphi^{-n}(z_m), z_{m-1}) + d(z_{m-1}, \varphi^n(z_m)) \leq 2(\beta^m + \beta^{m+1})\delta' \] we obtain from Lemma \ref{lem:bracket-lipschitz} that \[ z_{m+1} \in X^\text{s}\big(\varphi^{-n}(z_m), 2C(\beta^m + \beta^{m+1})\delta'\big) \cap X^\text{u}\big(\varphi^n(z_m), 2C(\beta^m + \beta^{m+1})\delta'\big) \,. \]  This shows \eqref{eq:periodic-induction-A} is true for $m + 1$.  Furthermore,
\begin{align*}
   d(x, z_{m+1}) &\leq d(x, z_{m-1}) + d(z_{m-1}, \varphi^n(z_m)) + d(\varphi^n(z_m), z_{m+1}) \\
   &< d(x, z_{m+1}) + (\beta^m + \beta^{m+1}) \delta' + 2C(\beta^{m-1} + \beta^m)\delta' \\
   &= d(x, z_{m+1}) + (\beta^m + \beta^{m+1}) (1 + 2C) \delta' \\
   &< d(x, z_{m+1}) + \left(\frac{1}{2^m} + \frac{1}{2^{m+1}}\right) \frac{\delta}{2} \\
   &< \left( \sum_{k=1}^{m+2} \frac{1}{2^k} \right) \delta \,.
\end{align*}
Hence \eqref{eq:periodic-induction-B} is true for $m + 1$, and this completes the induction proof.

The sequence $\{z_m\}_{m \geq 0}$ is defined for all $m$ and also contained in $B_\delta(x)$. By compactness, there is a convergent subsequence $\{z_{m_k}\}_{k \geq 0}$ converging to some element $p$.  Furthermore since $\overline{B_\delta(x)} \subseteq U$ we have $p \in U$.  Since $\displaystyle 0 < \beta < \frac{1}{2}$, \[ d(\varphi^{-n}(p), \varphi^n(p)) = \lim_{k \to \infty} d(\varphi^{-n}(z_{m_k}), \varphi^n(z_{m_k})) \leq \lim_{k \to \infty} 2(\beta^{m_k} + \beta^{m_k+1})\delta' = 0 \,. \] This shows that $p$ is a periodic point with period $2n$.
\end{proof}

\begin{corollary}\label{cor:sync-periodic-dense}
If $(X,\varphi)$ is synchronizing then $\text{Per}(X,\varphi)$ is dense in $X$.
\end{corollary}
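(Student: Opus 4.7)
The plan is to deduce the corollary immediately by combining the two key results established just before it: the density of synchronizing points (Proposition \ref{theorem:sync-points-dense}) and the existence of periodic points near any synchronizing point (Theorem \ref{theorem:periodic-existence}).

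First I would note that a synchronizing system is non-wandering, since irreducibility implies non-wandering (this was observed right after Definition \ref{def:Irreducible}, as an irreducible system admits a dense orbit, and every point of a dense orbit is non-wandering; alternatively this follows directly from applying the irreducibility condition with $V = U$). This is necessary because Theorem \ref{theorem:periodic-existence} is stated under the hypothesis that $(X,\varphi)$ is non-wandering.

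Next, let $U \subseteq X$ be an arbitrary non-empty open set. By Proposition \ref{theorem:sync-points-dense}, the set $X_\sync$ is dense in $X$, so there exists a synchronizing point $x \in U \cap X_\sync$. Since $(X,\varphi)$ is non-wandering and $x$ is synchronizing, Theorem \ref{theorem:periodic-existence} applies to $x$ with the neighborhood $U$, producing a periodic point $p \in U$. As $U$ was arbitrary, $\Per(X,\varphi)$ meets every non-empty open subset of $X$ and is therefore dense.

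There is essentially no obstacle here, as all the hard work has been done in the previous results; the only thing to verify carefully is that the hypotheses of Theorem \ref{theorem:periodic-existence} are indeed met, namely that synchronizing systems are non-wandering. This short argument is the natural bridge from the structural density of synchronizing points to the dynamical density of periodic points.
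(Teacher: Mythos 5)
Your proposal is correct and follows exactly the paper's argument: the paper's own proof of Corollary \ref{cor:sync-periodic-dense} is a one-line appeal to Theorem \ref{theorem:periodic-existence} together with Proposition \ref{theorem:sync-points-dense}. Your additional remark that irreducibility guarantees the non-wandering hypothesis of Theorem \ref{theorem:periodic-existence} is a helpful (and valid) elaboration of the same route, not a different one.
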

\begin{proof}
This follows from Theorem \ref{theorem:periodic-existence} and Proposition \ref{theorem:sync-points-dense}.
\end{proof}

We can now remark that there are expansive dynamical systems which are not synchronizing. For example, Toeplitz flows are minimal, expansive subshifts. Hence a Toeplitz flows (with infinitely many points) being minimal cannot have any periodic points and so it cannot be synchronizing, see \cite{downarowicz05} for more on Toeplitz flows.

\subsection{Covers of Expansive Systems by Rectangles}

Here we show that finitely presented systems are in fact synchronizing.  In fact we will show something more general.

Fried defines a property for expansive systems, equivalent to being finitely presented, called finitely rectangled \cite{fried1987}.  This is defined as follows: fix $\varepsilon > 0$, then $R \subseteq X$ is called a \emph{rectangle} if $R \times R \subseteq D_\varepsilon$.  We can assume that a rectangle is closed since $D_\varepsilon$ is closed.  Then, as in the proof of Proposition \ref{rectangle-nbhd}, if $x \in R$ one can show that $R$ is homeomorphic to $(X^\text{u}(x,\varepsilon) \cap R) \times (X^\text{s}(x,\varepsilon) \cap R)$ via the map $h_x(y) = ([y,x],[x,y])$.  Then an expansive system is called \emph{finitely rectangled} if $X$ is a finite union of rectangles.  

\begin{prop}\label{theorem:countably-rectangled-implies-sync}
Suppose $(X,\varphi)$ is an irreducible expansive dynamical system that is the union of countably many rectangles.  Then $(X,\varphi)$ is synchronizing.
\end{prop}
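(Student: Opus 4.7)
The plan is to show that the hypothesis produces at least one synchronizing point; combined with the given irreducibility, this yields the conclusion by Definition~\ref{def:synchronizing-general}.

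First I would invoke the Baire category theorem. The space $X$ is a compact metric space and hence a Baire space. By hypothesis $X = \bigcup_{n \geq 1} R_n$ where each $R_n$ is a rectangle, and as the author notes we may take each $R_n$ to be closed (since $D_\varepsilon$ is closed). A Baire space cannot be a countable union of closed nowhere-dense sets, so some rectangle $R_{n_0}$ must have non-empty interior in $X$. Pick any $x \in \interior(R_{n_0})$ and choose an open neighborhood $U \subseteq R_{n_0}$ of $x$.

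Next I would verify that $x$ is synchronizing. By the definition of a rectangle (at the fixed constant $\varepsilon$), we have $R_{n_0} \times R_{n_0} \subseteq D_\varepsilon$, and hence $U \times U \subseteq D_\varepsilon$. Since $U \times U$ is open in $X \times X$ and contains $(x,x)$, the pair $(x,x)$ lies in the interior of $D_\varepsilon$. Provided $\varepsilon$ can be taken with $\varepsilon \leq \varepsilon_0$ (from Lemma~\ref{lem:epsilon-naught-existence}), this shows $x \in X_\sync$ by Definition~\ref{def:synchronizing-point}. If the $\varepsilon$ used in defining ``rectangle'' is larger than $\varepsilon_0$, one simply shrinks $\varepsilon$: the inclusion $D_{\varepsilon'} \supseteq D_\varepsilon$ fails, but by Lemma~\ref{lemma:bracket-epsilon-continuity} (or directly by the argument in Lemma~\ref{lem:epsilon-naught-existence}), having $(x,x)$ in the interior of $D_\varepsilon$ for some positive $\varepsilon$ forces the same conclusion at a smaller $\varepsilon' \leq \varepsilon_0$. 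So $X_\sync \neq \emptyset$.

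Finally, combining $X_\sync \neq \emptyset$ with the hypothesis that $(X,\varphi)$ is irreducible, Definition~\ref{def:synchronizing-general} gives that $(X,\varphi)$ is synchronizing.

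The only subtle point is the matching of the constant $\varepsilon$ used in the definition of ``rectangle'' with the constant $\varepsilon_0$ of Lemma~\ref{lem:epsilon-naught-existence}; this is handled as above and is the sole place one must be careful. The main engine of the proof is a one-line Baire category argument: a countable union of closed sets in a compact metric space must have one member with non-empty interior, and any interior point of a rectangle automatically has local product structure.
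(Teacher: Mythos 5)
Your proposal is correct and follows essentially the same route as the paper: apply the Baire category theorem to the countable cover by closed rectangles to find a rectangle $R$ with non-empty interior, note that any $x \in \interior(R)$ has $(x,x) \in \interior(D_\varepsilon)$ since $R \times R \subseteq D_\varepsilon$, and conclude $x$ is synchronizing, so irreducibility gives the result. Your extra remark about reconciling the rectangle's $\varepsilon$ with the $\varepsilon_0$ of Lemma~\ref{lem:epsilon-naught-existence} is a detail the paper simply elides (it implicitly takes the rectangle constant at most $\varepsilon_0$), and it does not change the substance of the argument.
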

\begin{proof}
Let $\mathcal{R}$ be a countable collection of closed rectangles whose union is $X$.  By the Baire Category Theorem, there exists $R \in \mathcal{R}$ with non-empty interior.  Let $x \in \text{int}(R)$, then since $R \times R \subseteq D_\varepsilon$ we must have $(x,x) \in \text{int}(D_\varepsilon)$.  Hence $x$ is synchronizing and so $(X,\varphi)$ is synchronizing.
\end{proof}
It is natural to ask if there is a converse to the previous theorem; this is an open problem.
\begin{corollary}
If $(X,\varphi)$ is an irreducible finitely presented system then it is synchronizing.
\end{corollary}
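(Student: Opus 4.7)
The plan is to reduce this immediately to Proposition \ref{theorem:countably-rectangled-implies-sync}. The key observation is that the preceding paragraph already records Fried's characterization: a dynamical system is finitely presented if and only if it is finitely rectangled, i.e.\ $X$ can be written as a finite union of (closed) rectangles $R_1, \ldots, R_n$ with $R_i \times R_i \subseteq D_\varepsilon$ for some fixed $\varepsilon > 0$.

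Granting this equivalence from \cite{fried1987}, the argument is a one-liner. Assume $(X,\varphi)$ is an irreducible finitely presented system. Then by Fried's characterization, $X = R_1 \cup \cdots \cup R_n$ is a finite union of rectangles; in particular it is a countable (indeed finite) union of rectangles. Since $(X,\varphi)$ is also irreducible by hypothesis, Proposition \ref{theorem:countably-rectangled-implies-sync} applies directly and yields that $(X,\varphi)$ is synchronizing.

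There is essentially no obstacle here beyond citing the right result. The only minor point worth checking is that the notion of rectangle used in the finitely rectangled characterization matches the one used in Proposition \ref{theorem:countably-rectangled-implies-sync} (both taken from \cite{fried1987}), so that a finitely presented system genuinely satisfies the hypothesis of that proposition. Once that matching is in place, the Baire category step inside the proof of Proposition \ref{theorem:countably-rectangled-implies-sync} does the real work: some $R_i$ has nonempty interior, any interior point witnesses $(x,x) \in \interior(D_\varepsilon)$ and is therefore synchronizing, so $X_\sync \neq \emptyset$, and the definition of synchronizing system is satisfied.
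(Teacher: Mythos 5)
Your proposal is correct and is essentially the paper's own argument: finitely presented implies finitely rectangled (Fried), and then Proposition \ref{theorem:countably-rectangled-implies-sync} applies since a finite union of rectangles is in particular countable and irreducibility is assumed. No issues.
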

\begin{proof}
If $(X,\varphi)$ is finitely presented then it is finitely rectangled.  The result now follows from Proposition \ref{theorem:countably-rectangled-implies-sync}.
\end{proof}
\subsection{Examples}
There are many synchronizing systems that are not finitely presented. In particular, there are many shifts space that are synchronizing, but not sofic. A number of these are considered in detail in \cite{AndrewPhDThesis}. An explicit example of a synchronizing shift that is not sofic is the context-free shift, see \cite[Example 3.1.7]{lindmarcus}. We discuss a few more examples below.
\begin{example} \label{nonSoficShift}
Let $X \subseteq \{\word{a},\word{b},\word{c}\}^\mathbb{Z}$ be the closure of the set of bi-infinite paths on the following graph.

\begin{center}
\scalebox{1}{
\begin{tikzpicture}[
        > = stealth, 
        shorten > = 1pt, 
        auto,
        node distance = 3cm, 
        semithick, 
        scale = 0.5
    ]

    \tikzset{vertex/.style = {shape=circle,draw,minimum size=0.5em}}
    \tikzset{every loop/.style={looseness=10, in=230, out=130}}

    \node[vertex] (v1) {};
    \node[vertex] (v2) [right of=v1] {};
    \node[vertex] (v3) [right of=v2] {};
    \node (v4) [right of=v3] {\dots};

    \path[->] (v1) edge[loop left] node {$\word{a}$} (v1);
    \path[->] (v1) edge[bend left] node {$\word{b}$} (v2);
    \path[->] (v2) edge[bend left] node {$\word{b}$} (v3);
    \path[->] (v3) edge[bend left] node {$\word{b}$} (v4);
    \path[->] (v4) edge[bend left] node {$\word{c}$} (v3);
    \path[->] (v3) edge[bend left] node {$\word{c}$} (v2);
    \path[->] (v2) edge[bend left] node {$\word{c}$} (v1);

\end{tikzpicture}
}
\end{center}
This is a mixing synchronizing shift.  To see it is synchronizing, observe that that a word $w \in \lang(X)$ is a sychronizing word \cite{shift-paper} if and only if $\word{a} \sqsubseteq w$.  Note however $X$ is not sofic since the words $\{ \word{ab}^n \mid n \geq 0 \}$ each have distinct follower sets, see \cite[Theorem 3.2.10]{lindmarcus}.  Hence $X$ is not finitely presented.
\end{example}
\begin{proposition} \label{productProp}
Suppose that $(X_1, \varphi_1)$ is not finitely generated, but is synchronizing and $(X_2, \varphi_2)$ is synchronizing and mixing. Then $(X_1\times X_2, \varphi_1 \times \varphi_2)$ is not finitely generated, but is synchronizing. 
\end{proposition}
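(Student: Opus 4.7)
The plan is to handle the four properties --- expansiveness, irreducibility, existence of a synchronizing point, and failure of finite presentability --- separately.  Expansiveness of the product is standard: with the product metric $d((x_1,x_2),(y_1,y_2)) = \max\{d_1(x_1,y_1), d_2(x_2,y_2)\}$, the constant $\min\{\varepsilon_{X_1}, \varepsilon_{X_2}\}$ witnesses expansiveness of $(X_1 \times X_2, \varphi_1 \times \varphi_2)$.

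For irreducibility, it suffices to handle open sets of the form $U_1 \times U_2$ and $V_1 \times V_2$.  First I would show that the set $S_1 = \{n > 0 : \varphi_1^n(U_1) \cap V_1 \neq \emptyset\}$ is unbounded: starting from any $n_0 \in S_1$, pick a nonempty open $W \subseteq \varphi_1^{n_0}(U_1) \cap V_1$ and apply irreducibility of $X_1$ to the ordered pair $(W, V_1)$ to obtain $m > 0$ with $\varphi_1^m(W) \cap V_1 \neq \emptyset$, whence $n_0 + m \in S_1$; iterating produces arbitrarily large elements of $S_1$.  Mixing of $X_2$ supplies $N$ such that $\varphi_2^n(U_2) \cap V_2 \neq \emptyset$ for every $n \geq N$; choosing any $n \in S_1$ with $n \geq N$ and then independently picking a preimage witness in $U_1$ and one in $U_2$ yields a point of $(\varphi_1 \times \varphi_2)^n(U_1 \times U_2) \cap (V_1 \times V_2)$.

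For the synchronizing point, pick a synchronizing $x_i \in X_i$ for $i = 1, 2$.  The key observation (using the max product metric) is that the local stable and unstable sets factor as products:
\[ (X_1 \times X_2)^\text{s}((x_1,x_2),\varepsilon) = X_1^\text{s}(x_1,\varepsilon) \times X_2^\text{s}(x_2,\varepsilon) \]
and likewise for the local unstable sets, so the bracket in the product is the coordinatewise bracket.  From Proposition \ref{rectangle-nbhd} applied in each factor, the product charts on $X_i^\text{u}(x_i,\delta_i) \times X_i^\text{s}(x_i,\delta_i)$ assemble into a product chart on a neighborhood of $(x_1,x_2)$, showing $((x_1,x_2),(x_1,x_2))$ lies in the interior of $D_\varepsilon$ in the product (for a suitably small $\varepsilon$), and hence $(x_1,x_2)$ is synchronizing.

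Finally, for failure of finite presentability, I would argue by contrapositive.  If $(X_1 \times X_2, \varphi_1 \times \varphi_2)$ were finitely presented, there would exist a factor map $\pi : \Sigma \to X_1 \times X_2$ from a shift of finite type $\Sigma$.  The coordinate projection $p_1 : X_1 \times X_2 \to X_1$ is continuous, surjective (since $X_2$ is nonempty), and intertwines $\varphi_1 \times \varphi_2$ with $\varphi_1$, hence is a factor map.  Then $p_1 \circ \pi : \Sigma \to X_1$ exhibits $X_1$ as a factor of an SFT, contradicting the hypothesis that $X_1$ is not finitely presented.  The main obstacle is the irreducibility step: one might hope that irreducibility of $X_1$ and $X_2$ alone would transfer to the product, but this fails in general, and mixing of $X_2$ is precisely what lets us synchronise the time parameter across the two factors.
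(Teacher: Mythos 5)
Your proposal is correct and follows essentially the same route as the paper: the same three-part decomposition (expansiveness/irreducibility of the product, a synchronizing point $(x_1,x_2)$ built from synchronizing points in each factor, and non-finite-presentability via composing the hypothetical SFT factor map with the coordinate projection $p_1$). The only difference is that you spell out the details the paper dismisses as ``standard'' or ``not difficult to see'' --- in particular the use of mixing of $(X_2,\varphi_2)$ to synchronize the return times in the irreducibility argument, which is exactly the intended role of that hypothesis.
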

\begin{proof}
We must check that
\begin{enumerate}
\item $(X_1\times X_2, \varphi_1 \times \varphi_2)$ is expansive and irreducible;
\item $(X_1\times X_2, \varphi_1 \times \varphi_2)$ has a synchronizing point;
\item $(X_1\times X_2, \varphi_1 \times \varphi_2)$ is not finitely generated.
\end{enumerate}
The first item is standard so we omit the details. For the second, it is not difficult to see that if $x_1$ is synchronizing for $(X_1, \varphi_1)$ and $x_2$ is synchronizing for $(X_2, \varphi_2)$, then $(x_1, x_2)$ is synchronizing for $(X_1\times X_2, \varphi_1 \times \varphi_2)$.

For the third item, suppose that $(X_1\times X_2, \varphi_1 \times \varphi_2)$ is finitely generated. Then, by \cite[Theorem 2]{fried1987}, there is a shift of finite type $(\Sigma, \sigma)$ and a factor map $\pi : (\Sigma, \sigma) \rightarrow (X_1\times X_2, \varphi_1 \times \varphi_2)$. Let $\pi_1: X_1\times X_2 \rightarrow X_1$ be the projection map. It  is a factor map and the composition $\pi_1 \circ \pi$ is a factor map from a shift of finite type to $(X_1, \varphi_1)$. Using \cite[Theorem 2]{fried1987}, this contradicts the assumption that $(X_1, \varphi_1)$ is not finitely presented.
\end{proof}
\begin{example}
Let $(\Sigma, \sigma)$ be a synchronizing shift that is not sofic (e.g., the context-free shift or the shift discussed in Example \ref{nonSoficShift}) and $(\R^n/\Z^n, \varphi_2)$ be a mixing hyperbolic toral automorphism on the $n$-dimensional torus. Then, by Proposition \ref{productProp}, $(\Sigma \times \R^n/\Z^n, \sigma \times \varphi_2)$ is not finitely presented, but is synchronizing. Moreover, the underlying space of this dynamical system has dimension $n$.
\end{example}
The previous example shows that there are higher dimensional systems that are synchronizing, but not finitely presented. There are in fact many of these systems. One way to construct them is through skew product solenoids. This class of examples will be studied in detail in future work. For now, we include one explicit example.
\begin{example}
Let $(\Sigma, \sigma)$ be the shift space from Example \ref{nonSoficShift} and $(\Sigma^+, \sigma^+)$ be the associated one-sided shift space. Also let $K$ be the Klein bottle, $h_1: K \rightarrow K$ be the nine fold self covering map given in Figure \ref{dia9Klein}, and $h_2 : K \rightarrow K$ be the six fold self covering map defined in a similar way. 
\begin{figure}[h]
\begin{tikzpicture}[scale=0.75]
\draw[->-] (0,0)--(0,1);
\draw[->-] (1,0)--(1,1);
\draw[->>-] (1,0)--(0,0);
\draw[->>-] (0,1)--(1,1);

\node at (2,1/2) {$\mapsto$};

\draw[->-] (3,0)--(3,1); \draw[->-] (3,1)--(3,2); \draw[->-] (3,2)--(3,3);
\draw[->-] (6,0)--(6,1); \draw[->-] (6,1)--(6,2); \draw[->-] (6,2)--(6,3);
\draw[->>-] (3,3)--(4,3); \draw[->>-] (4,3)--(5,3); \draw[->>-] (5,3)--(6,3);
\draw[->>-] (6,0)--(5,0); \draw[->>-] (5,0)--(4,0); \draw[->>-] (4,0)--(3,0);

\draw[->-] (4,0)--(4,1); \draw[->-] (4,1)--(4,2); \draw[->-] (4,2)--(4,3);
\draw[->-] (5,0)--(5,1); \draw[->-] (5,1)--(5,2); \draw[->-] (5,2)--(5,3);

\draw[->>-] (3,1)--(4,1); \draw[->>-] (4,1)--(5,1); \draw[->>-] (5,1)--(6,1);
\draw[->>-] (6,2)--(5,2); \draw[->>-] (5,2)--(4,2); \draw[->>-] (4,2)--(3,2);
\end{tikzpicture}
\caption{A nine fold self-cover of the Klein bottle}
\label{dia9Klein}
\end{figure}
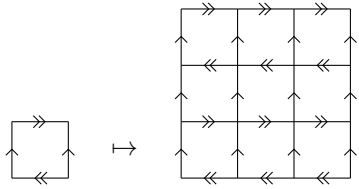

Given $(d_n)_{n \in \N}={\bold d} \in \Sigma^+$, we let 
\[ g_{{\bold d}} = \left\{ \begin{array}{cc} h_1 & d_0=\word{a} \\ h_2 & d_0\neq \word{a} \end{array} \right. \]
Based on these inputs, we define
\[ X=\{ ( ({\bold d}_0, y_0), ({\bold d}_1, y_1), ({\bold d}_2, y_2), \ldots ) \in \prod \Sigma^+ \times K \mid (\sigma^+({\bold d}_{i+1}), g_{{\bold d}_{i+1}}(y_{i+1}))=({\bold d}_i, y_i) \} \]
and 
\[ \varphi (({\bold d}_0, y_0), ({\bold d}_1, y_1), ({\bold d}_2, y_2), \ldots )= ((\sigma^+({\bold d}_0), g_{{\bold d}_0}(y_0)), ({\bold d}_0, y_0), ({\bold d}_1, y_1), \ldots ). \]
One can show that the dynamical system $(X, \varphi)$ is synchronizing, but not finitely presented. We will not discuss the details of the proof as this example fits within a general framework that will be studied in future work.
\end{example}

\section{Groupoids} \label{SecGroupoid}

We introduce several equivalence relations which capture notions of asymptotic equivalence of elements in an expansive dynamical system $(X,\varphi)$.  The definition of local conjugacy is due to Thomsen \cite{thomsen2010c}, including the stable and unstable versions --- although we reformulate Thomsen's original definition.  We will show that in the case of a synchronizing system much more can be said about these equivalence relations.

\subsection{The Local Conjugacy Relation}

Let $(X,\varphi)$ be an expansive dynamical system with $X$ a compact metric space and $\varphi : X \to X$ a homeomorphism.
\begin{definition}
We say that two points $x,y \in X$ are \emph{locally conjugate}, denoted $x \sim_\lc y$, if there exist two open neighborhoods $U$ and $V$ of $x$ and $y$ respectively, and a homeomorphism $\gamma : U \to V$ such that $\gamma(x) = y$ and \[ \lim_{n \to \pm \infty} \sup_{z \in U} \, d(\varphi^n(z), \varphi^n(\gamma(z))) = 0 \,. \]
We will denote the equivalence class of $x$ under the local conjugacy relation as $X^\lc(x)$, and we will also call the triple $(U,V,\gamma)$ a \emph{local conjugacy} from $x$ to $y$.
\end{definition}


One can show that local conjugacy is an equivalence relation. A crucial fact about local conjugacy is that, given two points that are locally conjugate, then a local conjugacy between the two points is essentially unique.  This is stated precisely in the following result of Thomsen \cite[Lemma 1.4]{thomsen2010c}.

\begin{lemma}\label{lemma:lc-unique}
Let $(U,V,\gamma)$ and $(U',V',\gamma')$ be two local conjugacies from $x$ to $y$.  Then there exists a local conjugacy $(U_0,V_0,\gamma_0)$ from $x$ to $y$ such that $x \in \displaystyle U_0 \subseteq U \cap U'$ and $\displaystyle \gamma|_{U_0} = \gamma'|_{U_0} = \gamma_0$.
\end{lemma}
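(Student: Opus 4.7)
The plan is to define
\[
W = \{ z \in U \cap U' : \gamma(z) = \gamma'(z) \}
\]
and show that $x$ is in the interior of $W$. Taking $U_0$ to be that interior, $V_0 = \gamma(U_0) = \gamma'(U_0)$, and $\gamma_0 = \gamma|_{U_0}$ will then visibly produce a local conjugacy with the required properties, since the uniform asymptotic condition for $(U_0,V_0,\gamma_0)$ is inherited immediately from $U_0 \subseteq U$ and $\gamma_0 = \gamma|_{U_0}$.

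The heart of the matter is showing $x \in \interior(W)$, and the key tool is expansiveness. Fix any $\varepsilon$ with $0 < \varepsilon \le \varepsilon_X$. By the defining uniform limits for the two local conjugacies, choose $N \ge 0$ so large that
\[
\sup_{z \in U}\, d(\varphi^n(z), \varphi^n(\gamma(z))) \;<\; \varepsilon/2
\quad\text{and}\quad
\sup_{z \in U'}\, d(\varphi^n(z), \varphi^n(\gamma'(z))) \;<\; \varepsilon/2
\]
for every $n$ with $|n| \ge N$. Then for every $z \in U \cap U'$ and every $|n| \ge N$, the triangle inequality yields $d(\varphi^n(\gamma(z)), \varphi^n(\gamma'(z))) < \varepsilon \le \varepsilon_X$, uniformly in $z$.

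The finitely many remaining indices $|n| < N$ are handled by continuity. Since $\gamma(x) = y = \gamma'(x)$ and each $\varphi^n \circ \gamma$ and $\varphi^n \circ \gamma'$ is continuous at $x$, we can select an open neighborhood $U_0$ of $x$ with $U_0 \subseteq U \cap U'$ such that for every $z \in U_0$ and every $|n| < N$,
\[
d(\varphi^n(\gamma(z)), \varphi^n(\gamma'(z))) \;\le\; \varepsilon_X.
\]
Combining the two ranges, we have $d(\varphi^n(\gamma(z)), \varphi^n(\gamma'(z))) \le \varepsilon_X$ for all $n \in \Z$ and all $z \in U_0$. Expansiveness forces $\gamma(z) = \gamma'(z)$ for every $z \in U_0$, so $U_0 \subseteq W$ and hence $x \in \interior(W)$.

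To finish, set $V_0 = \gamma(U_0)$ and $\gamma_0 = \gamma|_{U_0}$. Because $\gamma$ is a homeomorphism $U \to V$ and $U_0$ is open in $U$, $V_0$ is open in $X$ and $\gamma_0 : U_0 \to V_0$ is a homeomorphism with $\gamma_0(x) = y$; the uniform asymptotic conditions at $\pm\infty$ hold on $U_0$ because they hold on $U$. Since $\gamma = \gamma'$ on $U_0$, we also have $\gamma_0 = \gamma'|_{U_0}$. I do not anticipate a real obstacle here: the only mildly delicate point is treating the ``finitely many'' intermediate $n$ separately from the tails, and once expansiveness is invoked, the lemma falls out.
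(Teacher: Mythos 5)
Your proof is correct: the paper itself does not prove this lemma but simply cites Thomsen \cite[Lemma 1.4]{thomsen2010c}, and your argument is the standard one underlying that result --- uniform smallness of $d(\varphi^n(\gamma(z)),\varphi^n(\gamma'(z)))$ for $|n|\geq N$ from the two defining limits, continuity at $x$ for the finitely many indices $|n|<N$, and then expansiveness forcing $\gamma=\gamma'$ on a neighborhood $U_0$, with the restriction $(U_0,\gamma(U_0),\gamma|_{U_0})$ visibly still a local conjugacy. No gaps; the only point worth stating explicitly is the one you already handled, namely that $V_0=\gamma(U_0)$ is open because $\gamma$ is a homeomorphism onto the open set $V$.
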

It is clear that $x \sim_\lc y$ implies $x \sim_h y$ in any dynamical system. The converse is not always true, see \cite[Remark 1.13]{thomsen2010c} (or \cite[Example 4.4.1]{AndrewPhDThesis}) where the situation for the even shift is discussed. 

The situation is different for Smale spaces.  In fact for Smale spaces it is the case that two points are locally conjugate if and only if they are homoclinic.  In the following lemma we construct an explicit local conjugacy between homoclinic points using a construction involving the bracket map from Ruelle \cite{Ruelle1988}. This result is well-known, but its proof provides a nice example of the construction of a local conjugacy, so we have included it.
\begin{lemma}{\cite{Ruelle1988}}\label{lem:smale-h-implies-lc}
If $(X,\varphi)$ is a Smale space and $x \sim_h y$, then $x \sim_\lc y$.
\end{lemma}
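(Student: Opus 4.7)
The plan is to construct a local conjugacy between $x$ and $y$ explicitly from the Smale space bracket map, following Ruelle's idea. Given that $x \sim_\text{h} y$ (so $x \sim_\text{s} y$ and $x \sim_\text{u} y$), I would first invoke Lemma \ref{lem:stable-unstable-inductive-limit-top} to pick $N \geq 0$ large enough that simultaneously $\varphi^{N}(y) \in X^\text{s}(\varphi^{N}(x), \varepsilon)$ and $\varphi^{-N}(y) \in X^\text{u}(\varphi^{-N}(x), \varepsilon)$ for some fixed $\varepsilon \leq \min\{\eta, \varepsilon_X/2\}$, where $\eta$ is the adapted-metric constant from Section \ref{sec:adapted-metric}.

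On a small neighborhood $U$ of $x$, chosen using continuity of the bracket and of $\varphi^{\pm N}$, I would then define two continuous maps
\[ u(z) := \varphi^{-N}\bigl([\varphi^{N}(z), \varphi^{N}(y)]\bigr), \qquad s(z) := \varphi^{N}\bigl([\varphi^{-N}(y), \varphi^{-N}(z)]\bigr) \]
and set $\gamma(z) := [u(z), s(z)]$, shrinking $U$ further so that the outer bracket is defined throughout $U$. Uniqueness of the bracket (Lemma \ref{lem:bracket-unique-intersection}), combined with the choice of $N$, gives $u(x) = s(x) = y$, and hence $\gamma(x) = [y,y] = y$.

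The two local-conjugacy requirements would be verified as follows. By construction $\gamma(z) \in X^\text{s}(u(z), \varepsilon)$, while $\varphi^{N}(u(z)) \in X^\text{s}(\varphi^{N}(z), \varepsilon)$ directly from the definition of $u(z)$, so the Smale space Axiom C1 (equivalently the adapted-metric estimate of Section \ref{sec:adapted-metric}) yields
\[ d(\varphi^{n}(\gamma(z)), \varphi^{n}(z)) \leq d(\varphi^{n}(\gamma(z)), \varphi^{n}(u(z))) + d(\varphi^{n}(u(z)), \varphi^{n}(z)) \leq \lambda^{n} \varepsilon + \lambda^{n-N} \varepsilon \]
for all $n \geq N$, uniformly in $z \in U$, driving the supremum to $0$ as $n \to +\infty$. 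The symmetric estimate using $s(z)$ and Axiom C2 handles $n \to -\infty$. For the homeomorphism claim, the symmetry of $\sim_\text{h}$ lets me repeat the same construction with the roles of $x$ and $y$ swapped, producing a continuous $\gamma' : V \to X$ on a neighborhood $V$ of $y$ with $\gamma'(y) = x$; applying uniqueness of the bracket pointwise then gives $\gamma' \circ \gamma = \id$ and $\gamma \circ \gamma' = \id$ on suitably shrunken $U$ and $V$.

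The main obstacle will be the bookkeeping: I must arrange a single neighborhood $U$ of $x$ on which every intermediate bracket appearing in $u(z)$, $s(z)$, and $\gamma(z)$ is defined and on which the contraction estimates hold uniformly. Once this is organized, the Smale space axioms supply the geometric decay almost automatically; the essential point is that homoclinicity allows the construction to be carried out from the stable and unstable sides simultaneously, after conjugating by the appropriate large power of $\varphi$.
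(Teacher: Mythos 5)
Your construction is essentially the paper's (Ruelle--Putnam) proof: the paper's sketch defines $\gamma(z) = \left[ \varphi^{-N}\left[\varphi^N[z,x], \varphi^N(y)\right], \varphi^N\left[\varphi^{-N}(y), \varphi^{-N}[x,z]\right] \right]$, which differs from your $\gamma(z)=[u(z),s(z)]$ only in that it first brackets $z$ with $x$ and keeps the pushed-forward brackets defined by choosing $\varepsilon$ small relative to $\lambda^N \varepsilon_X$, whereas you instead shrink the neighborhood $U$ with $N$ --- an equivalent piece of bookkeeping. Your uniform estimates for the two limits and your two-sided-inverse argument for the homeomorphism claim (which the paper merely asserts) are correct.
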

\begin{proof}
We give a sketch of the proof given in \cite{putnam_1996}.  Fix $N \geq 0$ so that $d(\varphi^n(x), \varphi^n(y)) < \varepsilon_X$ for all $|n| \geq N$, which we know we can do because $x$ and $y$ are homoclinic points.  Next choose $\varepsilon > 0$ small enough that $(\lambda_X)^{-N} \varepsilon < \varepsilon_X$, which would mean that
\begin{align*}
    \varphi^n\left(X^\text{s}(x,\varepsilon)\right) &\subseteq X^\text{s}\left(\varphi^n(x), \varepsilon_X\right) \\
    \varphi^n\left(X^\text{u}(x,\varepsilon)\right) &\subseteq X^\text{u}\left(\varphi^n(x), \varepsilon_X\right)
\end{align*}
for all $n \in \{-N, \ldots N\}$.  Finally let $\delta > 0$ be small enough that $d(x,z) < \delta$ implies $[x,z] \in X^\text{s}(x,\varepsilon)$ and $[z,x] \in X^\text{u}(x,\varepsilon)$.  We can now define a map $\gamma$ for $z \in B_\delta(x)$ where $\gamma(z)$ is defined as \[ \gamma(z) = \left[ \varphi^{-N}\left[\varphi^N[z,x], \varphi^N(y)\right], \varphi^N\left[\varphi^{-N}(y), \varphi^{-N}[x,z]\right] \right] \,. \]  Note that by our specification $\delta$ and $\varepsilon$ this map is well-defined, and furthermore it is a homeomorphism onto its image.  The triple $\big( B_\delta(x), \gamma\left(B_\delta(x)\right), \gamma \big)$ is a local conjugacy from $x$ to $y$.
\end{proof}

It is worth commenting that the definition of local conjugacy is essentially informed by this construction of Ruelle \cite{Ruelle1988}, and the fact that the local conjugacy and homoclinic relations agree for Smale spaces is what makes local conjugacy a suitable relation for generalizing Ruelle's and Putnam's $C^\ast$-algebraic constructions for Smale spaces to the class of all expansive dynamical systems.

Next we show that local conjugacy is invariant under $\varphi$ for any expansive system.

\begin{lemma}\label{lem:lc-invariant-under-map}
Let $(X,\varphi)$ be an expansive system and $x,y \in X$.  Then $x \sim_\lc y$ if and only if $\varphi(x) \sim_\lc \varphi(y)$.
\end{lemma}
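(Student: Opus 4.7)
The plan is to show that conjugating the local conjugacy by $\varphi$ produces a local conjugacy between the images, then handle the converse by symmetry. Given a local conjugacy $(U, V, \gamma)$ from $x$ to $y$, I would define $U' = \varphi(U)$, $V' = \varphi(V)$, and $\gamma' = \varphi \circ \gamma \circ \varphi^{-1} : U' \to V'$. Since $\varphi$ is a homeomorphism, $U'$ and $V'$ are open neighborhoods of $\varphi(x)$ and $\varphi(y)$ respectively, and $\gamma'$ is a homeomorphism satisfying $\gamma'(\varphi(x)) = \varphi(\gamma(x)) = \varphi(y)$.

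The only nontrivial thing to verify is the uniform asymptotic condition. For any $z' \in U'$ write $z' = \varphi(z)$ with $z \in U$, so that
\[
d\bigl(\varphi^n(z'), \varphi^n(\gamma'(z'))\bigr) = d\bigl(\varphi^{n+1}(z), \varphi^{n+1}(\gamma(z))\bigr).
\]
Taking the supremum over $z' \in U'$ corresponds exactly to taking the supremum over $z \in U$, so
\[
\sup_{z' \in U'} d\bigl(\varphi^n(z'), \varphi^n(\gamma'(z'))\bigr) = \sup_{z \in U} d\bigl(\varphi^{n+1}(z), \varphi^{n+1}(\gamma(z))\bigr),
\]
and as $n \to \pm\infty$ the right-hand side tends to $0$ by the hypothesis on $(U, V, \gamma)$. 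Hence $(U', V', \gamma')$ is a local conjugacy from $\varphi(x)$ to $\varphi(y)$.

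For the converse, given a local conjugacy $(U', V', \gamma')$ from $\varphi(x)$ to $\varphi(y)$, the identical argument applied with $\varphi$ replaced by $\varphi^{-1}$ yields a local conjugacy from $x$ to $y$; concretely, $(U, V, \gamma) = (\varphi^{-1}(U'), \varphi^{-1}(V'), \varphi^{-1} \circ \gamma' \circ \varphi)$ works. Since the argument uses only that $\varphi$ is a homeomorphism of $X$ and an index shift in the defining limit, there is no real obstacle and the proof is essentially bookkeeping — no use of expansiveness, the adapted metric, or the bracket map is required.
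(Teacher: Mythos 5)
Your proposal is correct and follows exactly the paper's argument: conjugate the local conjugacy by $\varphi$ to get $(\varphi(U),\varphi(V),\varphi\circ\gamma\circ\varphi^{-1})$, observe that the defining supremum just undergoes an index shift $n\mapsto n+1$, and handle the converse symmetrically with $\varphi^{-1}$. Nothing further is needed.
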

\begin{proof}
Let $(U,V,\gamma)$ be a local conjugacy from $x$ to $y$, then we will show that $(\varphi(U), \varphi(V), \varphi \circ \gamma \circ \varphi^{-1})$ is a local conjugacy from $\varphi(x)$ to $\varphi(y)$.  We compute
\begin{align*}
    &\lim_{n\to\pm\infty} \sup_{z' \in \varphi(U)} d\left(\varphi^n(z'), \varphi^n(\varphi \circ \gamma \circ \varphi^{-1}(z'))\right) \\
    &= \lim_{n\to\pm\infty} \sup_{\substack{z \in U \\ z' = \varphi(z)}} d\left(\varphi^{n+1}(z), \varphi^{n+1}(\gamma(z))\right) \\
    &= 0 \,.
\end{align*}
A similar computation shows that $x \sim_\lc y$ if $\varphi(x) \sim_\lc \varphi(y)$.
\end{proof}

Next, we will need the useful fact that a local conjugacy preserves the local stable and unstable directions of points in a uniform sense.  Recall we are always assuming the metric on an expansive system is the adapted metric.

\begin{lemma}\label{lem:lc-uniform}
Let $(X,\varphi)$ be an expansive dynamical system.  If $(U,V,\gamma)$ is a local conjugacy and $W \subseteq U$ is compact, then for all $\varepsilon > 0$ there is a $\delta > 0$ such that
\begin{align*}
    \gamma\left(X^\text{s}(z, \delta)\right) &\subseteq X^\text{s}(\gamma(z), \varepsilon) \\
    \gamma\left(X^\text{u}(z, \delta)\right) &\subseteq X^\text{u}(\gamma(z), \varepsilon)
\end{align*}
for all $z \in W$.
\end{lemma}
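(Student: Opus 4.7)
The plan is to handle the stable case directly; the unstable case follows by the symmetric argument using the limit as $n\to-\infty$. The key idea is to split the range of iterates into a finite initial segment, where uniform continuity of $\gamma$ controls things, and a tail $n\geq N$, where the uniform convergence in the definition of local conjugacy takes over. The only subtlety to watch for is that the arguments we feed into $\gamma$ all actually lie in its domain $U$; this is handled by the elementary inclusion $X^{\text{s}}(z,\delta)\subseteq\overline{B_\delta(z)}$ together with the compactness of $W$.

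Given $\varepsilon>0$, I would first invoke the defining condition $\lim_{n\to+\infty}\sup_{w\in U}d(\varphi^n(w),\varphi^n(\gamma(w)))=0$ of local conjugacy to choose $N\geq 0$ so that $\sup_{w\in U}d(\varphi^n(w),\varphi^n(\gamma(w)))<\varepsilon/3$ for every $n\geq N$. Since $W\subseteq U$ is compact and $U$ is open, I can pick $\delta_0>0$ small enough that the compact set $W':=\{w\in X:d(w,W)\leq\delta_0\}$ is still contained in $U$. On this compact set, each of the finitely many continuous maps $\varphi^0\circ\gamma,\varphi^1\circ\gamma,\dots,\varphi^N\circ\gamma$ is uniformly continuous, so I can extract a single $\delta\in(0,\delta_0]$ with $\delta\leq\varepsilon/3$ such that for all $z\in W$ and $z'\in W'$ with $d(z,z')<\delta$ one has $d(\varphi^n(\gamma(z)),\varphi^n(\gamma(z')))<\varepsilon$ for every $0\leq n\leq N$.

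To finish, fix $z\in W$ and $z'\in X^{\text{s}}(z,\delta)$. Taking $n=0$ in the definition of $X^{\text{s}}(z,\delta)$ gives $d(z,z')\leq\delta\leq\delta_0$, so $z'\in W'\subseteq U$; in particular $\gamma(z')$ is defined and the uniform bound from the first step applies equally well at $z'$. For $0\leq n\leq N$ the inequality $d(\varphi^n(\gamma(z)),\varphi^n(\gamma(z')))<\varepsilon$ is immediate from the uniform continuity step. For $n\geq N$, I would use the triangle inequality
\[ d(\varphi^n(\gamma(z')),\varphi^n(\gamma(z)))\leq d(\varphi^n(\gamma(z')),\varphi^n(z')) + d(\varphi^n(z'),\varphi^n(z)) + d(\varphi^n(z),\varphi^n(\gamma(z))), \]
and bound each of the three terms by $\varepsilon/3$: the outer two by the choice of $N$ (since $z,z'\in U$) and the middle one by $z'\in X^{\text{s}}(z,\delta)$ with $\delta\leq\varepsilon/3$. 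This shows $\gamma(z')\in X^{\text{s}}(\gamma(z),\varepsilon)$, as required. The unstable case is the verbatim mirror image, using the limit as $n\to-\infty$ and the sets $X^{\text{u}}(z,\delta)$.

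I do not anticipate any real obstacle; the proof is essentially a clean three-epsilon argument and does not seem to require the contraction property of the adapted metric, only the raw definition of $X^{\text{s}}$ and $X^{\text{u}}$ together with uniform continuity on compact sets and the uniform tail decay built into the definition of local conjugacy.
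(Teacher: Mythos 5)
Your proof is correct and follows essentially the same route as the paper's: a three-term triangle inequality for the tail $n\geq N$ (using the uniform decay in the definition of local conjugacy together with $\delta\leq\varepsilon/3$) combined with uniform continuity of $\varphi^n\circ\gamma$ on a compact set for the finitely many indices $0\leq n\leq N$. Your introduction of the compact $\delta_0$-neighborhood $W'$ of $W$ to guarantee $X^{\text{s}}(z,\delta)\subseteq U$ is in fact a slightly more careful handling of the domain issue than the paper's wording, but it is the same argument.
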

\begin{proof}
Since $(U,V,\gamma)$ is a local conjugacy, we can find $0 < \delta_1 \leq \frac{\varepsilon}{3}$ such that for all $z \in W$ we have $X^\text{s}(z, \delta_1) \subseteq U$ and $X^\text{u}(z, \delta_1) \subseteq U$, and if $z' \in X^\text{s}(z, \delta_1)$ then there exists $N$ such that
\[ d(\varphi^n(\gamma(z)), \varphi^n(\gamma(z'))) \leq d(\varphi^n(\gamma(z)), \varphi^n(z)) + d(\varphi^n(z), \varphi^n(z')) + d(\varphi^n(z'), \varphi^n(\gamma(z'))) \leq \varepsilon \]
for all $n>N$.  In other words there exists $\delta_1$ such that $\gamma(z') \in \varphi^{-N}\left(X^\text{s}\left(\varphi^N(\gamma(z)), \varepsilon\right)\right)$ for all $z \in W$ and $z' \in X^\text{s}(z, \delta_1)$.  Additionally, by the uniform continuity of $\varphi$ and $\gamma$ restricted to $W$, we may find $\delta_2 > 0$ such that for all $z,z' \in W$ we have $d(z,z') < \delta_2$ implies $d(\varphi^n(\gamma(z)), \varphi^n(\gamma(z'))) < \varepsilon$ for all $0\le n <N$.

Letting $\delta = \min\{\delta_1, \delta_2\}$, for all $z \in W$ we then have that $z' \in X^\text{s}(z, \delta)$ implies $\gamma(z') \in X^\text{s}(\gamma(z), \varepsilon)$, or in other words there is a $\delta > 0$ such that for every $z \in W$ \[ \gamma\left(X^\text{s}(z, \delta)\right) \subseteq X^\text{s}(\gamma(z), \varepsilon) \,. \]
The proof for the local unstable set of $z \in W$ follows similarly.
\end{proof}

We now prove the important fact that the property of being synchronizing is invariant under the local conjugacy relation.  In particular this will show that if $x$ is synchronizing then then entire equivalence class of $x$ under local conjugacy is also synchronizing.  Hence the synchronizing points determine a sub-equivalence relation.  Note that the following is not true for the homoclinic relation, that is if $x$ is synchronizing and $x \sim_\text{h} y$ then it does not follow that $y$ is synchronizing. An example can be constructed for the even shift, the details are similar to \cite[Remark 1.13]{thomsen2010c}.

\begin{prop}\label{prop:lc-preserves-sync}
Let $(X,\varphi)$ be an expansive system and $x,y \in X$.  If $x$ is synchronizing and $x \sim_\lc y$, then $y$ is synchronizing.
\end{prop}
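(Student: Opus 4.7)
The plan is to show directly that $(y,y) \in \interior(D_\varepsilon)$ for some $\varepsilon \leq \varepsilon_0$, by using a local conjugacy $(U,V,\gamma)$ from $x$ to $y$ to transport the local product structure at $x$ over to $y$. Concretely, for points $y_1, y_2$ sufficiently close to $y$, set $x_i = \gamma^{-1}(y_i)$; since $\gamma^{-1}$ is continuous, these are close to $x$, so the bracket $[x_1, x_2]$ is defined using the synchronizing property of $x$. The candidate for $[y_1, y_2]$ is then $w := \gamma([x_1, x_2])$.

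The crux is verifying that $w \in X^\text{s}(y_1, \varepsilon) \cap X^\text{u}(y_2, \varepsilon)$. Fix $0 < \varepsilon \leq \varepsilon_0$. First, choose a small compact neighborhood $W \subseteq U$ of $x$ and apply Lemma \ref{lem:lc-uniform} with this $\varepsilon$ and $W$ to obtain $\delta_1 > 0$ such that for every $z \in W$,
\[
\gamma\!\left(X^\text{s}(z, \delta_1)\right) \subseteq X^\text{s}(\gamma(z), \varepsilon), \qquad \gamma\!\left(X^\text{u}(z, \delta_1)\right) \subseteq X^\text{u}(\gamma(z), \varepsilon).
\]
Next, since $(x,x) \in \interior(D_{\varepsilon_0})$, Lemma \ref{lemma:bracket-epsilon-continuity} furnishes $\delta_2 > 0$ such that $d(x,x_1), d(x,x_2) < \delta_2$ forces $[x_1,x_2]$ to be defined and to lie in $X^\text{s}(x_1, \delta_1) \cap X^\text{u}(x_2, \delta_1)$; shrinking $\delta_2$ further if necessary we arrange that $x_1, x_2 \in W$ and that $X^\text{s}(x_1, \delta_1), X^\text{u}(x_2, \delta_1) \subseteq U$, so that applying $\gamma$ makes sense.

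Finally, by continuity of $\gamma^{-1}$, pick $\delta_y > 0$ such that $d(y, y_i) < \delta_y$ implies $d(x, \gamma^{-1}(y_i)) < \delta_2$. For any such $y_1, y_2$, put $x_i = \gamma^{-1}(y_i)$ and $w = \gamma([x_1, x_2])$; by the choice of $\delta_1, \delta_2$ we get
\[
w \in \gamma\!\left(X^\text{s}(x_1, \delta_1)\right) \cap \gamma\!\left(X^\text{u}(x_2, \delta_1)\right) \subseteq X^\text{s}(y_1, \varepsilon) \cap X^\text{u}(y_2, \varepsilon),
\]
so $(y_1, y_2) \in D_\varepsilon$. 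This exhibits a neighborhood of $(y,y)$ in $D_\varepsilon$, and hence $y$ is synchronizing.

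The main obstacle is that $\gamma$ need not literally intertwine $\varphi$ (it is only an asymptotic conjugacy), so one cannot simply argue that $\gamma$ sends the bracket at $x$ to the bracket at $y$ on the nose. The quantitative saving grace is Lemma \ref{lem:lc-uniform}: it guarantees that $\gamma$ carries sufficiently small local stable and unstable sets into slightly larger local stable and unstable sets, uniformly on compact subsets of $U$, which is precisely what lets us transport the intersection $X^\text{s}(x_1, \delta_1) \cap X^\text{u}(x_2, \delta_1)$ to an intersection inside $X^\text{s}(y_1, \varepsilon) \cap X^\text{u}(y_2, \varepsilon)$. Everything else is continuity bookkeeping to make the various $\delta$'s compatible.
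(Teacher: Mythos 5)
Your proof is correct and follows essentially the same route as the paper: both transport the bracket at $x$ over to $y$ via $w=\gamma\left(\left[\gamma^{-1}(y_1),\gamma^{-1}(y_2)\right]\right)$, with Lemma \ref{lem:lc-uniform} supplying the uniform control of $\gamma$ on local stable and unstable sets. The only (harmless) difference is that you verify Definition \ref{def:synchronizing-point} directly by exhibiting a full product ball around $(y,y)$ inside $D_\varepsilon$, using Lemma \ref{lemma:bracket-epsilon-continuity} at $x$, whereas the paper works with the rectangle $X^\text{u}(y,\delta_y)\times X^\text{s}(y,\delta_y)$ obtained from the product neighborhood of Proposition \ref{rectangle-nbhd}.
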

\begin{proof}
Let $(U,V,\gamma)$ be a local conjugacy from $x$ to $y$.  Since $x$ is synchronizing we know that there exists $\delta_x > 0$ such that $X^\text{u}(x, \delta_x) \times X^\text{s}(x, \delta_x) \subseteq D_\varepsilon$ and $[-,-]$ is a homeomorphism from this set onto its image $R$ --- which we can assume is contained in $U$.  Since $R$ is a compact subset of $U$, there is a $\delta > 0$ such that Lemma \ref{lem:lc-uniform} holds for $z \in R$.  Let $\delta'_x = \min\{\delta_x, \delta\}$, so we have that, with $R' \subseteq R$ being the homeomorphic image of $X^\text{u}(x, \delta'_x) \times X^\text{s}(x, \delta'_x)$ under $[-,-]$,
\begin{align*}
    \gamma\left(X^\text{s}(z, \delta'_x)\right) &\subseteq X^\text{s}(\gamma(z), \varepsilon) \\
    \gamma\left(X^\text{u}(z, \delta'_x)\right) &\subseteq X^\text{u}(\gamma(z), \varepsilon)
\end{align*}
for all $z \in R'$.

Next we will show that $y$ is indeed synchronizing in the sense that we can define $[z,z']$ for $z,z'$ close enough to $y$.  Let $\delta_y > 0$ be such that $X^\text{u}(y, \delta_y) \times X^\text{s}(y, \delta_y) \subseteq V \times V$ and if $(z,z') \in X^\text{u}(y, \delta_y) \times X^\text{s}(y, \delta_y)$ then $(\gamma^{-1}(z), \gamma^{-1}(z')) \in X^\text{u}(x, \delta'_x) \times X^\text{s}(x, \delta'_x)$.  Hence $[\gamma^{-1}(z), \gamma^{-1}(z')]$ is well-defined.  Since $[\gamma^{-1}(z), \gamma^{-1}(z')] \in R' \subseteq U$, we can define $\gamma\left([\gamma^{-1}(z), \gamma^{-1}(z')]\right) \in V$.  We will show that $[z,z']$ is defined and that \[ [z,z'] = \gamma\left([\gamma^{-1}(z), \gamma^{-1}(z')]\right) \,. \]  From above we have that 
\begin{align*}
    \gamma\left([\gamma^{-1}(z), \gamma^{-1}(z')]\right) &\in \gamma\left( X^\text{s}(\gamma^{-1}(z), \delta'_x) \cap X^\text{u}(\gamma^{-1}(z'), \delta'_x) \right) \\
    &= \gamma\left( X^\text{s}(\gamma^{-1}(z), \delta'_x)\right) \cap \gamma\left(X^\text{u}(\gamma^{-1}(z'), \delta'_x)\right) \\
    &\subseteq  X^\text{s}(z, \varepsilon) \cap X^\text{u}(z', \varepsilon)
\end{align*}
meaning $[z,z']$ is defined as just mentioned.  Note that $\gamma^{-1}(z)$ and $\gamma^{-1}(z')$ are both in $R'$, so the last step is justified.  Hence $X^\text{u}(y, \delta_y) \times X^\text{s}(y, \delta_y) \subseteq D_\varepsilon$ and this shows $y$ is synchronizing.
\end{proof}

\subsection{The Stable/Unstable Local Conjugacy Relations}

We also have the notions of local conjugacy which only hold in the limit in one direction.  We remark that, similar to local conjugacy, these respectively imply stable and unstable equivalence.  However, the converse is not true; again, an example can be constructed for the even shift and the details are similar to \cite[Remark 1.13]{thomsen2010c}.

\begin{definition}
Let $(X,\varphi)$ be an expansive dynamical system with $x,y \in X$.  Suppose $\delta,\delta' > 0$ and $\gamma : X^\text{u}(x,\delta) \to X^\text{u}(y,\delta')$ is a homeomorphism onto its image such that $\gamma(x) = y$ and \[ \lim_{n \to \infty} \sup_{z \in U} d(\varphi^n(z), \varphi^n(\gamma(z))) = 0 \,. \]  Then $\gamma$ is a called a \emph{stable local conjugacy} and we say that $x \sim_\lcs y$.  We will denote the stable local conjugacy equivalence class of $x$ as $X^\lcs(x)$.
\end{definition}

\begin{definition}
Let $(X,\varphi)$ be an expansive dynamical system with $x,y \in X$.  Suppose $\delta,\delta' > 0$ and $\gamma : X^\text{s}(x,\delta) \to X^\text{s}(y,\delta')$ is a homeomorphism onto its image such that $\gamma(x) = y$ and \[ \lim_{n \to \infty} \sup_{z \in U} d(\varphi^{-n}(z), \varphi^{-n}(\gamma(z))) = 0 \,. \]  Then $\gamma$ is a called an \emph{unstable local conjugacy} and we say that $x \sim_\lcu y$.  We will denote the ustable local conjugacy equivalence class of $x$ as $X^\lcu(x)$.
\end{definition}

Note that these are indeed equivalence relations in the same way that local conjugacy is an equivalence relation.  Also note that if $x \sim_\lc y$ via a local conjugacy $\gamma$, we get both a stable and unstable local conjugacy from $x$ to $y$ by making the restrictions $\gamma|_{X^\text{u}(x,\delta)}$ and $\gamma|_{X^\text{s}(x,\delta)}$ respectively.  In short, $x \sim_\lc y$ implies $x \sim_\lcs y$ and $x \sim_\lcu y$.  Lastly note that both stable and unstable local conjugacy is unique in the same sense as Lemma \ref{lemma:lc-unique}, see \cite[Lemma 1.4]{thomsen2010c}.

We have the following lemma, the proof of which is the same as the proof of Lemma \ref{lem:lc-invariant-under-map}.

\begin{lemma}\label{lem:lcs-lcu-invariant-under-map}
Let $(X,\varphi)$ be an expansive system and $x,y \in X$.  Then
\begin{enumerate}[(i)]
    \item $x \sim_\lcs y$ if and only if $\varphi(x) \sim_\lcs \varphi(y)$, and
    \item $x \sim_\lcu y$ if and only if $\varphi(x) \sim_\lcu \varphi(y)$.
\end{enumerate}
\end{lemma}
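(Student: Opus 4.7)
The plan is to imitate the proof of Lemma \ref{lem:lc-invariant-under-map} essentially verbatim, conjugating the given stable/unstable local conjugacy by $\varphi$ (or $\varphi^{-1}$ for the converse direction). The only extra bookkeeping compared to that lemma is ensuring that the conjugated map has a domain and codomain of the correct form, since a stable (respectively unstable) local conjugacy must be defined on a local unstable (respectively stable) set rather than on an arbitrary open neighborhood.

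For the forward direction of (i), given a stable local conjugacy $\gamma : X^\text{u}(x,\delta) \to X^\text{u}(y,\delta')$, I would propose the candidate $\tilde\gamma := \varphi \circ \gamma \circ \varphi^{-1}$ and first check its domain. Using only the definition of the local unstable set, any $\tilde\delta \leq \delta$ works: if $z' \in X^\text{u}(\varphi(x), \tilde\delta)$ then for every $n \geq 0$,
\[ d(\varphi^{-n}(x), \varphi^{-n}(\varphi^{-1}(z'))) = d(\varphi^{-(n+1)}(\varphi(x)), \varphi^{-(n+1)}(z')) \leq \tilde\delta \leq \delta, \]
so $\varphi^{-1}(X^\text{u}(\varphi(x), \tilde\delta)) \subseteq X^\text{u}(x,\delta)$. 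A symmetric computation, using that $\varphi$ is $K$-Lipschitz in the adapted metric, gives $\varphi(X^\text{u}(y,\delta')) \subseteq X^\text{u}(\varphi(y), K\delta')$. Hence $\tilde\gamma$ is a well-defined homeomorphism $X^\text{u}(\varphi(x),\tilde\delta) \to X^\text{u}(\varphi(y), K\delta')$ onto its image, and $\tilde\gamma(\varphi(x)) = \varphi(\gamma(x)) = \varphi(y)$.

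I would then verify the asymptotic condition by the same substitution $z = \varphi^{-1}(z')$ used in Lemma \ref{lem:lc-invariant-under-map}:
\[ \sup_{z' \in X^\text{u}(\varphi(x),\tilde\delta)} d(\varphi^n(z'), \varphi^n(\tilde\gamma(z'))) \leq \sup_{z \in X^\text{u}(x,\delta)} d(\varphi^{n+1}(z), \varphi^{n+1}(\gamma(z))), \]
which tends to $0$ as $n \to \infty$ because the one-sided limit is preserved under the index shift $n \mapsto n+1$. The converse direction of (i) follows by applying the forward direction to $\varphi^{-1}$ in place of $\varphi$, using that $\varphi^{-1}(\varphi(x)) = x$ and that $\varphi^{-1}$ is also Lipschitz for the adapted metric. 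Part (ii) is completely symmetric: swap local unstable sets for local stable sets throughout, and take the one-sided limit as $n \to \infty$ in $d(\varphi^{-n}(\cdot),\varphi^{-n}(\cdot))$, so that the index shift becomes $-n \mapsto -(n+1)$, which again preserves the limit.

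I do not anticipate any real obstacle here. The only subtlety relative to the local-conjugacy lemma is that the natural conjugated map lives on $\varphi(X^\text{u}(x,\delta))$ rather than on a set of the form $X^\text{u}(\varphi(x),\cdot)$, but the two Lipschitz inclusions above bridge this gap cleanly, and the asymptotic estimate is purely formal.
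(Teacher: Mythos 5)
Your proposal is correct and follows exactly the route the paper takes: the paper's proof of this lemma is simply the observation that the argument of Lemma \ref{lem:lc-invariant-under-map} carries over, i.e.\ one conjugates the given stable (resp.\ unstable) local conjugacy by $\varphi$ and shifts the index in the one-sided limit, with the converse handled by conjugating by $\varphi^{-1}$. Your extra bookkeeping with the inclusions $\varphi^{-1}\left(X^\text{u}(\varphi(x),\tilde\delta)\right) \subseteq X^\text{u}(x,\delta)$ and $\varphi\left(X^\text{u}(y,\delta')\right) \subseteq X^\text{u}(\varphi(y),K\delta')$ just makes explicit the domain/codomain adjustments the paper leaves implicit.
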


The following proposition gives conditions under which $x \sim_\lcu y$ and $x \sim_\lcs y$ implies $x \sim_\lc y$, namely that $x$ and $y$ must both be synchronizing.  This is a generalization of the fact that $x \sim_\text{u} y$ and $x \sim_\text{s} y$ implies $x \sim_\text{h} y$ in a Smale space, and is more evidence that synchronizing points are points that behave like points in a Smale space.

\begin{prop}\label{prop:sync-lcu-lcs-imply-lc}
Let $(X,\varphi)$ be a synchronizing system with $x,y \in X_\sync$.  If $x \sim_\lcs y$ and $x \sim_\lcu y$, then $x \sim_\lc y$.
\end{prop}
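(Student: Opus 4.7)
The plan is to combine the given stable and unstable local conjugacies at $x$ and $y$ through the bracket map, mirroring the Smale-space construction recalled in Lemma \ref{lem:smale-h-implies-lc}.

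First I would unpack the data: let $\alpha : X^\text{u}(x,\delta_1) \to X^\text{u}(y,\delta_1')$ be the stable local conjugacy with $\alpha(x)=y$ and $\sup_{a} d(\varphi^n(a),\varphi^n(\alpha(a))) \to 0$ as $n \to +\infty$, and let $\beta : X^\text{s}(x,\delta_2) \to X^\text{s}(y,\delta_2')$ be the unstable local conjugacy with $\beta(x)=y$ and $\sup_b d(\varphi^{-n}(b),\varphi^{-n}(\beta(b))) \to 0$ as $n \to +\infty$. Since $x,y \in X_\sync$, Proposition \ref{rectangle-nbhd} provides product-structure parameters at both points. By joint continuity of $\alpha$, $\beta$, and the bracket, I can choose $\delta \leq \min\{\delta_1,\delta_2\}$ small enough that $[-,-]$ restricts to a homeomorphism from $X^\text{u}(x,\delta)\times X^\text{s}(x,\delta)$ onto a product neighborhood $R$ of $x$, while simultaneously $\alpha\bigl(X^\text{u}(x,\delta)\bigr)\subseteq X^\text{u}(y,\delta_y)$ and $\beta\bigl(X^\text{s}(x,\delta)\bigr)\subseteq X^\text{s}(y,\delta_y)$, where $\delta_y$ is a bracket parameter for a product neighborhood $R_y$ of $y$ supplied by Proposition \ref{rectangle-nbhd}.

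Next I would define $\gamma : R \to R_y$ by $\gamma([a,b]) = [\alpha(a),\beta(b)]$. Then $\gamma(x)=[\alpha(x),\beta(x)]=[y,y]=y$, and $\gamma$ factors as
\[
R \xrightarrow{\;\cong\;} X^\text{u}(x,\delta)\times X^\text{s}(x,\delta) \xrightarrow{\;\alpha\times\beta\;} X^\text{u}(y,\delta_y)\times X^\text{s}(y,\delta_y) \xrightarrow{\;\cong\;} R_y,
\]
a composition of homeomorphisms, so $\gamma$ is a homeomorphism onto the open neighborhood $R_y$ of $y$.

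To verify the asymptotic condition I would use the adapted metric together with the bracket inclusions $[a,b]\in X^\text{s}(a,\varepsilon)\cap X^\text{u}(b,\varepsilon)$ (and likewise $[\alpha(a),\beta(b)]\in X^\text{s}(\alpha(a),\varepsilon)\cap X^\text{u}(\beta(b),\varepsilon)$ at $y$). These yield $d(\varphi^n(z),\varphi^n(a))\le \lambda^n\varepsilon$ and $d(\varphi^n(\alpha(a)),\varphi^n(\gamma(z)))\le \lambda^n\varepsilon$ for $n\ge 0$, uniformly in $z=[a,b]\in R$. The triangle inequality then produces
\[
\sup_{z\in R} d(\varphi^n(z),\varphi^n(\gamma(z))) \le 2\lambda^n\varepsilon + \sup_{a\in X^\text{u}(x,\delta)} d(\varphi^n(a),\varphi^n(\alpha(a))),
\]
which tends to $0$ as $n\to +\infty$ by the stable local conjugacy property of $\alpha$. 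The case $n\to-\infty$ is entirely symmetric, routed through $b$ and $\beta$ and the adapted metric estimate for the unstable direction. Therefore $(R,\gamma(R),\gamma)$ is a local conjugacy from $x$ to $y$, proving $x\sim_\lc y$. The only delicate point is the nested choice of domains so that every bracket in sight is defined and the composition actually lands in $R_y$; once arranged, the exponential contraction coming from the adapted metric combines cleanly with the uniform convergence of $\alpha$ and $\beta$ to finish the argument.
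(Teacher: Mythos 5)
Your construction is exactly the paper's proof: the map $\gamma([a,b]) = [\alpha(a),\beta(b)]$ on a product neighborhood of $x$ coincides with the paper's $\gamma(z) = \left[\gamma^{\text{u}}([z,x]),\gamma^{\text{s}}([x,z])\right]$, and your verification of the two-sided limit via the bracket inclusions, the adapted-metric contraction, and the triangle inequality is the same estimate given there. The only inessential slip is the claim that $\gamma$ is onto $R_y$: since $\alpha$ and $\beta$ are only homeomorphisms onto their images, $\gamma$ is a homeomorphism onto its image $V \subseteq R_y$, which is how the paper phrases it and suffices for the local conjugacy from $x$ to $y$.
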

\begin{proof}
Assume that $\varepsilon_0$ is fixed as in Lemma \ref{lem:epsilon-naught-existence}.  Let $\lambda$ be the constant associated to the adapted metric, see Section \ref{sec:adapted-metric}.  Since $x$ and $y$ are both synchronizing, we have the rectangular neighborhoods
\begin{align*}
    R_x &\cong X^\text{u}(x,\delta_x) \times X^\text{s}(x, \delta_x) \text{ , and } \\
    R_y &\cong X^\text{u}(y,\delta_y) \times X^\text{s}(y, \delta_y)
\end{align*}
by Proposition \ref{rectangle-nbhd}.  Since $x \sim_\lcs y$ and $x \sim_\lcu y$ we have stable and unstable local conjugacies which we can assume are of the form
\begin{align*}
    \gamma^\text{u} &: X^\text{u}(x, \delta) \to X^\text{u}(y, \delta_y) \text{ , and } \\
    \gamma^\text{s} &: X^\text{s}(x, \delta) \to X^\text{s}(y, \delta_y)
\end{align*}
for some $0 < \delta \leq \delta_x$ small enough that $\gamma^\text{u}$ and $\gamma^\text{s}$ are both homeomorphisms onto their respective images.  Let $U$ denote the image of $X^\text{u}(x, \delta) \times X^\text{s}(x, \delta)$ under $[-,-]$, noting that $h_x$ (as defined in the proof of Proposition \ref{rectangle-nbhd}) restricted to $U$ is a homeomorphism since $\delta \leq \delta_x$.  We define $\gamma$ as \[ \gamma(z) = \left[\gamma^\text{u}\left([z, x]\right), \gamma^\text{s}\left([x, z]\right)\right] \] for $z \in U$, which is a homeomorphism since it is the composition of homeomorphisms $[-,-] \circ (\gamma^\text{u} \times \gamma^\text{s}) \circ (h_x)|_U$.  Let $V$ be the image of this map in $R_y$, then $\gamma : U \to V$ is the homeomorphism that we wish to show is a local conjugacy from $x$ to $y$.

We will only show the limit in the stable direction as the argument for the unstable direction is very similar.  Fix $\varepsilon > 0$, then observe that
\begin{align*}
    \lim_{n \to \infty} \sup_{z \in U} d(\varphi^n(z), \varphi^n(\gamma(z))) &= \lim_{n \to \infty} \sup_{z \in U} d(\varphi^n(z), \varphi^n(\left[\gamma^\text{u}\left([z, x]\right), \gamma^\text{s}\left([x, z]\right)\right])) \\
    &= \lim_{n \to \infty} \sup_{(z_1, z_2) \in X^\text{u}(x, \delta) \times X^\text{s}(x, \delta)} d(\varphi^n([z_1, z_2]), \varphi^n(\left[\gamma^\text{u}\left(z_1\right), \gamma^\text{s}\left(z_2\right)\right])) \,.
\end{align*}
Since $\gamma^\text{u}$ is a stable local conjugacy, we can find $N$ large enough that $d(\varphi^n(z_1), \varphi^n(\gamma^\text{u}(z_1))) < \frac{\varepsilon}{3}$ for all $n \geq N$ and $z_1 \in X^\text{u}(x, \delta)$.  Since we are assuming $d$ is the adapted metric, we will also increase $N$ if necessary to ensure that $\lambda^n \diam(X) < \frac{\varepsilon}{3}$ for all $n \geq N$.  Then
\begin{align*}
    d(\varphi^n([z_1, z_2]), \varphi^n(\left[\gamma^\text{u}\left(z_1\right), \gamma^\text{s}\left(z_2\right)\right])) &\leq d(\varphi^n([z_1, z_2]), \varphi^n(z_1)) + d(\varphi^n(z_1), \varphi^n(\gamma^\text{u}(z_1))) \\ & \hspace{3.5cm} + d(\varphi^n(\gamma^\text{u}(z_1))), \varphi^n(\left[\gamma^\text{u}\left(z_1\right), \gamma^\text{s}\left(z_2\right)\right])) \\
    &\leq \lambda^n d([z_1, z_2], z_1) + d(\varphi^n(z_1), \varphi^n(\gamma^\text{u}(z_1))) \\ & \hspace{3.5cm} + \lambda^n d(\gamma^\text{u}(z_1), \left[\gamma^\text{u}\left(z_1\right), \gamma^\text{s}\left(z_2\right)\right]) \\
    &\leq d(\varphi^n(z_1), \varphi^n(\gamma^\text{u}(z_1))) + 2\lambda^n \diam(X) \\
    &< \varepsilon
\end{align*}
for all $n \geq N$.  Hence \[ \lim_{n \to \infty} \sup_{z \in U} d(\varphi^n(z), \varphi^n(\gamma(z))) = 0 \,. \]  The proof that the limit in the unstable direction vanishes is similar, and thus $\gamma : U \to V$ is a local conjugacy from $x$ to $y$, that is, $x \sim_\lc y$.
\end{proof}

\begin{lemma}\label{lem:rect-nbhd-implies-lcu-lcs}
Let $(X,\varphi)$ be a synchronizing system with $x \in X$ synchronizing.  Then if $R$ is a product neighborhood of $x$ and $y \in \interior(R)$, we have
\begin{enumerate}[(i)]
    \item $y \sim_\lcu [x,y] \sim_\lcs x$, and
    \item $y \sim_\lcs [y,x] \sim_\lcu x$.
\end{enumerate}
\end{lemma}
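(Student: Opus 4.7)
The plan is to construct each of the four local conjugacies explicitly using the bracket map inside the product neighborhood $R$. For $y \sim_\lcu [x,y]$ I will use $\gamma(z) = [x, z]$ defined on $X^\text{s}(y, \delta)$; for $[x,y] \sim_\lcs x$ I will use $\eta(w) = [w, x]$ defined on $X^\text{u}([x,y], \delta)$. Part (ii) is completely symmetric, via $z \mapsto [z, x]$ on $X^\text{u}(y, \delta)$ and $w \mapsto [x, w]$ on $X^\text{s}([y,x], \delta)$. The required base-point identities $\gamma(y) = [x, y]$ and $\eta([x,y]) = [[x,y], x] = [x, x] = x$ follow from the bracket identity $[[a, b], x] = [a, x]$ established inside the proof of Proposition \ref{rectangle-nbhd}; the identities for part (ii) use the dual identity $[x, [a, b]] = [x, b]$.

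For each map, the verification of the remaining conditions is parallel. That $\gamma$ maps into the correct local stable set comes from $[x, z] \in X^\text{s}(x, \varepsilon)$ together with the adapted metric:
\[ d(\varphi^n [x, z], \varphi^n [x, y]) \leq d(\varphi^n [x, z], \varphi^n x) + d(\varphi^n x, \varphi^n [x, y]) \leq 2\lambda^n \varepsilon, \]
so $\gamma(z) \in X^\text{s}([x, y], 2\varepsilon)$. The unstable-limit condition is immediate from $\gamma(z) \in X^\text{u}(z, \varepsilon)$, which gives $d(\varphi^{-n} z, \varphi^{-n} \gamma(z)) \leq \lambda^n \varepsilon$, uniformly in $z$. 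The corresponding bounds for $\eta$ and for the two maps in (ii) are obtained by swapping the roles of the stable and unstable directions.

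The main point is injectivity, which is what genuinely uses the local product structure at $x$. For $\gamma$, if $[x, z_1] = [x, z_2]$ with $z_1, z_2 \in X^\text{s}(y, \delta)$, then since $[x, z_i] \in X^\text{u}(z_i, \varepsilon)$ we have $z_1, z_2 \in X^\text{u}([x, z_1], \varepsilon) \cap X^\text{s}(y, \delta)$; for $\delta$ and $\varepsilon$ at most $\varepsilon_X/2$ this intersection has at most one point by Lemma \ref{lem:bracket-unique-intersection}, so $z_1 = z_2$. The inverse of $\gamma$ is the continuous map $w \mapsto [y, w]$, using the bracket identity $[y, [x, z]] = [y, z] = z$ (the last equality holds because $z$ itself lies in $X^\text{s}(y, \delta) \cap X^\text{u}(z, \varepsilon)$, which has at most one point). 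The injectivity of $\eta$ and of the maps in part (ii) follow in the same way, swapping stable and unstable. The remaining technicalities---choosing $\delta$ small enough that every bracket in sight is defined, that all images lie in $R$, and that the various small-constant conditions on $\varepsilon$ and $\delta$ hold simultaneously---are routine applications of Lemma \ref{lemma:bracket-epsilon-continuity} and Proposition \ref{rectangle-nbhd}.
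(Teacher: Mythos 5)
Your proposal is correct and follows essentially the same route as the paper: the same bracket maps $z \mapsto [x,z]$ and $z \mapsto [z,x]$, the same adapted-metric contraction estimate for the one-sided limits, and the symmetric treatment of (ii). The only difference is that you spell out the injectivity and explicit inverse (via uniqueness of the intersection $X^\text{s}\cap X^\text{u}$), which the paper simply asserts when it notes the map is a homeomorphism onto its image.
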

\begin{proof}
We show the proof of (i) and the proof of (ii) is similar.  Assume $R \cong X^\text{u}(x, \delta_x) \times X^\text{s}(x, \delta_x)$.  Assume $\delta > 0$ is small enough that $X^\text{s}(y, \delta) \subseteq R$.  Then we define $\gamma : X^\text{s}(y, \delta) \to X^\text{s}(x, \delta_x)$ by \[ \gamma(z) = [x, z] \] for $z \in X^\text{s}(y, \delta)$.  Note that this is a homeomorphism onto its image and that $\gamma(y) = [x, y]$.  Next we compute
\begin{align*}
    \lim_{n \to \infty} \sup_{z \in X^\text{s}(y,\delta)} d(\varphi^{-n}(z), \varphi^{-n}(\gamma(z))) &= \lim_{n \to \infty} \sup_{z \in X^\text{s}(y,\delta)} d(\varphi^{-n}(z), \varphi^{-n}([x, z])) \\
    &\leq \lim_{n \to \infty} \sup_{z \in X^\text{s}(y,\delta)} \lambda^n d(z, [x, z]) \\
    &\leq \lim_{n \to \infty} \lambda^n \diam(R) \\
    &= 0 \,.
\end{align*}
Hence $\gamma$ is an unstable local conjugacy, so $y \sim_\lcu [x,y]$.  A very similar argument shows that $\gamma'(z) = [z, x]$ for $z \in X^\text{u}([x,y], \delta')$, and for $\delta' > 0$ small enough, is a stable local conjugacy from $[x,y]$ to $x$.
\end{proof}

\subsection{Local Conjugacy and Synchronizing Periodic Points}

We will be interested in local conjugacy with respect to periodic points that are also synchronizing.  By Proposition \ref{theorem:sync-points-dense}, we know that there are a lot of these.  In fact, since $Per(X,\varphi)$ is dense in $X$ and $X_\sync$ is an open dense subset of $X$, we can conclude that synchronizing periodic points are also dense in $X$.  The synchronizing periodic points will behave similar to periodic points in a Smale space, and this is what will make them suitable for proving results about synchronizing spaces that extend results of Putnam \cite{putnam_1996} and Putnam--Spielberg \cite{putnam99} about Smale spaces.

\begin{lemma}
Let $(X,\varphi)$ be an expansive system, $x \in X$, and $p \in X$ a synchronizing periodic point.  If $x \sim_\text{s} p$, $x \sim_\text{u} p$, $x \sim_\text{h} p$, or $x \sim_\lc p$, then $x$ is also synchronizing.
\end{lemma}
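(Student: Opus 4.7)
The plan is to handle the four hypotheses separately, each reducing to a tool already assembled in the paper. The case $x \sim_\lc p$ is immediate from Proposition \ref{prop:lc-preserves-sync}, which asserts that local conjugacy preserves the synchronizing property. For $x \sim_\text{h} p$ one has in particular $x \sim_\text{s} p$, so it suffices to treat the two one-sided asymptotic hypotheses $x \sim_\text{s} p$ and $x \sim_\text{u} p$.

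For $x \sim_\text{s} p$, the key observation is that $X_\sync$ is open in $X$ (as recorded in the proof of Proposition \ref{theorem:sync-points-dense}, a consequence of Proposition \ref{rectangle-nbhd}, and an argument that does not require irreducibility). Since $p \in X_\sync$ by hypothesis, there is an open neighborhood $V$ of $p$ with $V \subseteq X_\sync$. Letting $N \geq 1$ denote the period of $p$, stable equivalence yields
\[
d\bigl(\varphi^{kN}(x), p\bigr) \,=\, d\bigl(\varphi^{kN}(x),\varphi^{kN}(p)\bigr) \longrightarrow 0 \quad \text{as } k \to \infty,
\]
so for some sufficiently large $k$ we have $\varphi^{kN}(x) \in V \subseteq X_\sync$. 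Then applying Lemma \ref{lemma:sync-invariant-under-map} inductively (each application shifts by one and preserves membership in $X_\sync$) gives $x \in X_\sync$.

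The case $x \sim_\text{u} p$ is handled identically using the iterates $\varphi^{-kN}$, exploiting $\varphi^{-kN}(p) = p$ so that $\varphi^{-kN}(x) \to p$ and the same neighborhood-plus-invariance argument applies. No step is expected to present difficulty; the whole argument is essentially a packaging of openness of $X_\sync$, periodicity of $p$, and the $\varphi$-invariance of synchronizing-ness, together with the direct appeal to Proposition \ref{prop:lc-preserves-sync} in the local conjugacy case.
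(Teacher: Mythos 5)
Your proposal is correct and follows essentially the same route as the paper: push $x$ forward (or backward) along multiples of the period of $p$ until it lands in a synchronizing neighborhood of $p$, then pull back using the $\varphi$-invariance of the synchronizing property (Lemma \ref{lemma:sync-invariant-under-map}); your use of openness of $X_\sync$ is just a repackaging of the paper's direct argument with $\interior(D_\varepsilon)$. The only cosmetic difference is that you invoke Proposition \ref{prop:lc-preserves-sync} for the local conjugacy case, where the paper simply notes that case follows from the stable one.
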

\begin{proof}
Let $r$ denote the period of $p$.  We will prove this for the case when $x \sim_\text{s} p$ as the proofs for the other cases either follow directly or are similar.  By Definition \ref{def:synchronizing-point}, we know that $(p,p) \in \text{int}(D_\varepsilon)$ for some $\varepsilon > 0$.  Since $x \sim_\text{s} p$, we also know that for some $N$ large enough $(\varphi^{rn}(x), \varphi^{rn}(x)) \in \text{int}(D_\varepsilon)$ for all $n \geq N$.  This means in particular that $\varphi^{rN}(x)$ is synchronizing, and by Lemma \ref{lemma:sync-invariant-under-map} we conclude $x$ must be synchronizing as well.
\end{proof}

We will now show that the stable and stable local conjugacy relations agree for a synchronizing periodic point $p$.  This is not necessarily true for an arbitrary synchronizing point. An example of such a point can be found in the even shift. We leave the details to the reader.

\begin{lemma}\label{lem:sync-periodic-lc-is-same}
Let $(X,\varphi)$ be an expansive system and $p \in X$ a synchronizing periodic point.  Then $X^\lcu(p) = X^\text{u}(p)$ and $X^\lcs(p) = X^\text{s}(p)$.  In other words, $p \sim_\text{s} x$ if and only if $p \sim_\lcs x$ and $p \sim_\text{u} x$ if and only if $p \sim_\lcu x$ for any $x \in X$.
\end{lemma}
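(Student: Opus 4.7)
The plan is to show both directions of each equivalence. The implications $p\sim_{\lcs} x \implies p\sim_{\mathrm s} x$ and $p\sim_{\lcu} x \implies p\sim_{\mathrm u} x$ are immediate from the definitions, since the existence of a stable (resp.\ unstable) local conjugacy $\gamma$ with $\gamma(p)=x$ forces $d(\varphi^n(p),\varphi^n(x))\to 0$ (resp.\ as $n\to -\infty$). So the work is entirely in the reverse implications, and by symmetry I will only explain the stable case; the unstable case will follow by applying the same argument to $\varphi^{-1}$.

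Assume $p\sim_{\mathrm s} x$ and let $r\geq 1$ be the period of $p$. Pick $\varepsilon$ with $0<\varepsilon\leq \varepsilon_0$ so that Definition \ref{def:synchronizing-point} applies at $p$, and pick a product neighborhood $R$ of $p$ as in Proposition \ref{rectangle-nbhd}. First I would choose a multiple $M$ of $r$ large enough that $d(\varphi^m(p),\varphi^m(x))\leq \varepsilon$ for all $m\geq M$ (possible since $p\sim_{\mathrm s} x$) and also large enough that $\varphi^M(x)\in \interior(R)$ (possible since $\varphi^M(x)\to p$ along multiples of $r$). Because $\varphi^M(p)=p$, the first choice of $M$ gives $\varphi^M(x)\in X^{\mathrm s}(p,\varepsilon)$, and trivially $\varphi^M(x)\in X^{\mathrm u}(\varphi^M(x),\varepsilon)$. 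By uniqueness of the bracket (Lemma \ref{lem:bracket-unique-intersection}), $[p,\varphi^M(x)]=\varphi^M(x)$.

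Now I apply Lemma \ref{lem:rect-nbhd-implies-lcu-lcs}(i) with synchronizing point $p$ and $y=\varphi^M(x)\in \interior(R)$ to conclude
\[
\varphi^M(x)\sim_{\lcu}\bigl[p,\varphi^M(x)\bigr]\sim_{\lcs} p.
\]
Since $[p,\varphi^M(x)]=\varphi^M(x)$, the right-hand relation reads $\varphi^M(x)\sim_{\lcs} p$. Finally, since $\sim_{\lcs}$ is invariant under iteration of $\varphi$ (Lemma \ref{lem:lcs-lcu-invariant-under-map}(i)) and $\varphi^M(p)=p$, applying $\varphi^{-M}$ gives $x\sim_{\lcs} p$, which is what we wanted.

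For the unstable case, the symmetric argument chooses $M$ a multiple of $r$ so that $\varphi^{-M}(x)$ is close enough to $p$ to lie in $\interior(R)\cap X^{\mathrm u}(p,\varepsilon)$; then $[\varphi^{-M}(x),p]=\varphi^{-M}(x)$, Lemma \ref{lem:rect-nbhd-implies-lcu-lcs}(ii) yields $\varphi^{-M}(x)\sim_{\lcu} p$, and Lemma \ref{lem:lcs-lcu-invariant-under-map}(ii) transports this back to $x\sim_{\lcu} p$. The only subtlety in the whole argument is ensuring simultaneously that $\varphi^M(x)$ sits inside the rectangle \emph{and} inside $X^{\mathrm s}(p,\varepsilon)$ so that the bracket trivially recovers $\varphi^M(x)$; both conditions hold for all sufficiently large $M$, so this is easily arranged. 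Once that is done, Lemma \ref{lem:rect-nbhd-implies-lcu-lcs} and the periodicity of $p$ do all the real work.
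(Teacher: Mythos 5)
Your proposal is correct and follows essentially the same route as the paper's proof: push $x$ forward by a large multiple of the period into the product neighborhood of $p$, use Lemma \ref{lem:rect-nbhd-implies-lcu-lcs} (with the bracket identity $[p,\varphi^M(x)]=\varphi^M(x)$, which the paper leaves implicit) to get $\varphi^M(x)\sim_\lcs p$, and then transport back via Lemma \ref{lem:lcs-lcu-invariant-under-map} and periodicity. Your explicit check that $\varphi^M(x)$ lies both in $\interior(R)$ and in $X^{\mathrm{s}}(p,\varepsilon)$ is a welcome clarification but not a different argument.
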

\begin{proof}
We will show the proof that $p \sim_\text{s} x$ implies $p \sim_\lcs x$, and the result that $p \sim_\text{u} x$ implies $p \sim_\lcu x$ is very similar.  By definition we have $X^\lcs(p) \subseteq X^\text{s}(p)$, so we need to show $X^\text{s}(p) \subseteq X^\lcs(p)$.  Let $R = X^\text{u}(p, \delta_p) \times X^\text{s}(p, \delta_p)$ be a product neighborhood of $p$, and let $x \in X^\text{s}(p)$.  Then, there is an $N$ such that $\varphi^{rn}(x) \in R$ for all $n \geq N$ where $r$ is the period of $p$.  In particular, $\varphi^{rn}(x) \in X^\text{s}(p, \delta_p)$ for all $n \geq N$.  Furthermore note that $\varphi^{rN} \sim_\lcs p$ by Lemma \ref{lem:rect-nbhd-implies-lcu-lcs}, and so by Lemma \ref{lem:lcs-lcu-invariant-under-map} we have $x \sim_\lcs \varphi^{-rN}(p) = p$.
\end{proof}

\begin{corollary} 
Let $(X,\varphi)$ be an expansive system, $x \in X$, and $p \in X$ a synchronizing point which is also periodic with period $n$.  Then $X^\text{h}(p) = X^\lc(p)$.
\end{corollary}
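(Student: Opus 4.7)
The plan is to prove the two inclusions $X^{lc}(p) \subseteq X^\text{h}(p)$ and $X^\text{h}(p) \subseteq X^{lc}(p)$ separately, with only the second requiring any work. The first is immediate from the observation made right after the definition of local conjugacy: if $x \sim_{lc} y$ via $(U,V,\gamma)$, then in particular $d(\varphi^n(x),\varphi^n(y)) = d(\varphi^n(x),\varphi^n(\gamma(x))) \to 0$ as $n \to \pm\infty$, so $x \sim_\text{h} y$.

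For the reverse inclusion, the strategy is to combine the three earlier results about synchronizing periodic points. Take $x \in X^\text{h}(p)$, so $x \sim_\text{s} p$ and $x \sim_\text{u} p$. First, apply the lemma preceding Lemma \ref{lem:sync-periodic-lc-is-same}, which says that since $p$ is a synchronizing periodic point and $x \sim_\text{s} p$ (or equivalently $x \sim_\text{u} p$), the point $x$ is itself synchronizing. Second, apply Lemma \ref{lem:sync-periodic-lc-is-same} twice to upgrade the stable and unstable equivalences for a synchronizing periodic point: we obtain $x \sim_{lcs} p$ and $x \sim_{lcu} p$. Third, now that both $x$ and $p$ are known to be synchronizing, Proposition \ref{prop:sync-lcu-lcs-imply-lc} applies and yields $x \sim_{lc} p$, so $x \in X^{lc}(p)$.

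There is no real obstacle here: the corollary is essentially just the synthesis of the three preceding results, in the order just described. One small bookkeeping point worth noting is that the preceding lemma is stated for $x \sim_\text{s} p$ (and similarly for $\sim_\text{u}$), so it applies separately to each half of the homoclinic hypothesis and in particular guarantees that $x$ lies in $X_\sync$, which is precisely the hypothesis needed to invoke Proposition \ref{prop:sync-lcu-lcs-imply-lc}.
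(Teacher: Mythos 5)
Your proposal is correct and follows essentially the same route as the paper: the paper's proof is exactly the synthesis of Lemma \ref{lem:sync-periodic-lc-is-same} and Proposition \ref{prop:sync-lcu-lcs-imply-lc}, with the easy inclusion $X^\lc(p) \subseteq X^\text{h}(p)$ and the fact that $x$ is synchronizing (via the lemma preceding Lemma \ref{lem:sync-periodic-lc-is-same}) left implicit. Your write-up simply makes those implicit steps explicit, which is a faithful and slightly more careful rendering of the same argument.
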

\begin{proof}
This follows from Lemma \ref{lem:sync-periodic-lc-is-same} and Proposition \ref{prop:sync-lcu-lcs-imply-lc}.
\end{proof}


For the next several results, note that it is necessary to assume that a given synchronizing system is mixing in order to achieve the density of $X^\text{u}(p)$ and $X^\text{s}(p)$. In the first of these, we extend a property of Smale spaces to the case of synchronizing dynamical systems in the specific case of a synchronizing periodic point, see \cite{ruelle_2004}. We will start with two more general results. We learned of the proof technique used in these lemmas from Putnam \cite[Theorem 3.1.4]{putnamNotes19}.

\begin{lemma}
Let $(X,\varphi)$ be a mixing synchronizing system. Suppose that $x\in X$ and there exists $x_0$ in the closure of the forward orbit of $x$ that is synchronizing. Then $X^\text{s}(x)$ is dense in $X$. 
\end{lemma}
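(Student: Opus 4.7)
The plan is to combine the accumulation $\varphi^{n_k}(x) \to x_0$ with the mixing property and the local product structure at $x_0$ to produce, for any nonempty open $V$, an element of $X^\text{s}(x) \cap V$ given as the $\varphi^{-n_k}$-image of a bracket formed in the product neighborhood of $x_0$.

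First, fix a nonempty open $V$, a point $v_0 \in V$, and $\eta > 0$ with $B_\eta(v_0) \subseteq V$, taking $\eta$ small enough that $\eta/2$ lies within the contraction range of the adapted metric. By Lemma~\ref{lemma:bracket-epsilon-continuity} applied at $x_0$, choose $\delta > 0$ so that whenever $a, b \in B_\delta(x_0)$ the bracket $[a,b]$ is defined and satisfies $[a,b] \in X^\text{s}(a, \eta/2) \cap X^\text{u}(b, \eta/2)$. By mixing applied to the pair $(B_{\eta/2}(v_0), B_\delta(x_0))$, there is some $N$ such that $\varphi^n(B_{\eta/2}(v_0)) \cap B_\delta(x_0) \neq \emptyset$ for all $n \geq N$. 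Using the accumulation hypothesis, select $n_k \geq N$ with $\varphi^{n_k}(x) \in B_\delta(x_0)$, and pick $w \in B_{\eta/2}(v_0)$ with $\varphi^{n_k}(w) \in B_\delta(x_0)$.

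The candidate point is $y := \varphi^{-n_k}([\varphi^{n_k}(x), \varphi^{n_k}(w)])$. Since the bracket lies in $X^\text{s}(\varphi^{n_k}(x), \eta/2)$, it is stably equivalent to $\varphi^{n_k}(x)$; applying $\varphi^{-n_k}$ and invoking the $\varphi$-invariance of stable equivalence gives $y \sim_\text{s} x$, so $y \in X^\text{s}(x)$. On the other hand, the bracket also lies in $X^\text{u}(\varphi^{n_k}(w), \eta/2)$, so the unstable contraction of the adapted metric yields $d(y, w) \leq \lambda^{n_k} \cdot (\eta/2) < \eta/2$, whence $d(y, v_0) \leq d(y,w) + d(w,v_0) < \eta$ and $y \in B_\eta(v_0) \subseteq V$. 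The main obstacle is spotting this bracket construction: the bracket at $\varphi^{n_k}(x)$ and $\varphi^{n_k}(w)$ is stably anchored at the first coordinate and unstably anchored at the second, so pulling back by $\varphi^{-n_k}$ preserves its stable class while contracting its distance from $w$. The accumulation hypothesis supplies the first anchor and mixing independently supplies a suitable $w$ for the second, and both anchors must hold at the same index $n_k$.
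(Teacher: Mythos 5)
Your proof is correct and follows essentially the same route as the paper's: use Lemma~\ref{lemma:bracket-epsilon-continuity} at the synchronizing point $x_0$, combine mixing with the accumulation of the forward orbit at $x_0$ to get a common index $n$, form the bracket $[\varphi^{n}(x),\varphi^{n}(w)]$, and pull back by $\varphi^{-n}$ so the adapted metric's unstable contraction places the point in the target ball while its stable anchoring gives membership in $X^{\text{s}}(x)$. The only differences are bookkeeping of the constants (your $\eta/2$ versus the paper's $\varepsilon'=\min\{\delta/2,\eta\}$).
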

\begin{proof}
Let $y\in X$ and $\delta>0$. We will show that $X^\text{s}(x) \cap B_{\delta}(y)$ is non-empty. Let $U=B_{\frac{\delta}{2}}(y)$ and $\epsilon^{\prime}=\min\{ \frac{\delta}{2}, \eta\}$ where $\eta$ is the constant associated to the adapted metric. Using the fact that $x_0$ synchronizing and Lemma \ref{lemma:bracket-epsilon-continuity}, there exists $\delta^{\prime}$ such that
\begin{enumerate}
\item $B_{\delta^{\prime}}(x_0) \subseteq R$ where $R$ is a rectangular neighborhood of $x_0$ and
\item $[x',y'] \in X^\text{s}(x', \varepsilon') \cap X^\text{u}(y', \varepsilon')$ whenever $d(x_0, x') < \delta'$ and $d(x_0, y') < \delta'$.
\end{enumerate}
Let $V=B_{\delta^{\prime}}(x_0)$ and apply the definition of mixing to $U$ to get $N\in \N$ such that
\[
\varphi^n(U) \cap V \neq \emptyset \hbox{ for each }n\ge N.
\]
Since $x_0$ is in the closure of the forward orbit of $x$, there exists $n \ge N$ such that $\varphi^{n}(x) \in B_{\delta^{\prime}}(x_0)$. Using the fact that $\varphi^{n}(U) \cap V$ is non-empty, there exists $w \in U=B_{\frac{\delta}{2}}(y)$ such that $\varphi^{n}(w) \in V=B_{\delta^{\prime}}(x_0)$. 

In particular, the bracket $[\varphi^n(x), \varphi^n(w)]$ is well defined so we can form
\[
z=\varphi^{-n}( [\varphi^n(x), \varphi^n(w)] )
\]
By the second property of $\delta^{\prime}$ above, we have that $\varphi^{n}(z) \in X^u(\varphi^{n}(w), \epsilon^{\prime})$. Using properties of the adapted metric, we have that
\[
d(w, z) \le \lambda^n \epsilon^{\prime} < \epsilon^{\prime}.
\]
Hence
\[
d(y, z) \le d(y, w) + d(w, z) < \frac{\delta}{2} + \epsilon^{\prime}< \delta
\]
and we have that $z \in B_{\delta}(y)$. Notice also that $z\in X^s(x)$ since $\varphi^n(z) \in X^s( \varphi^n(x), \epsilon^{\prime})$, so $X^\text{s}(x) \cap B_{\delta}(y)$ is non-empty as required.
\end{proof}
\begin{lemma}
Let $(X,\varphi)$ be a mixing synchronizing system. Suppose that $x\in X$ and there exists $x_0$ in the closure of the backward orbit of $x$ that is synchronizing. Then $X^\text{u}(x)$ is dense in $X$. 
\end{lemma}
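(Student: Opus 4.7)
The plan is to mirror the preceding proof under time reversal. Both mixing and the synchronizing condition are manifestly invariant under replacing $\varphi$ by $\varphi^{-1}$, and this replacement interchanges forward orbits with backward orbits as well as stable equivalence classes with unstable ones. One could formally just apply the previous lemma to $(X,\varphi^{-1})$, but I would instead rerun the construction in the original system to keep the exposition parallel.

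Concretely, given $y \in X$ and $\delta > 0$, I would exhibit a point of $X^\text{u}(x) \cap B_\delta(y)$. Set $U = B_{\delta/2}(y)$ and $\varepsilon' = \min\{\delta/2,\eta\}$, and use the hypothesis that $x_0$ is synchronizing together with Lemma \ref{lemma:bracket-epsilon-continuity} to choose $\delta' > 0$ such that $B_{\delta'}(x_0)$ lies in a rectangular neighborhood of $x_0$ and such that $[x',y'] \in X^\text{s}(x',\varepsilon') \cap X^\text{u}(y',\varepsilon')$ whenever $d(x_0,x') < \delta'$ and $d(x_0,y') < \delta'$. Let $V = B_{\delta'}(x_0)$.

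Mixing holds equivalently in both time directions, so there exists $N$ with $\varphi^{-n}(U) \cap V \neq \emptyset$ for every $n \geq N$. Since $x_0$ is in the closure of the backward orbit of $x$, I can increase $n \geq N$ so that $\varphi^{-n}(x) \in V$ and simultaneously pick $w \in U$ with $\varphi^{-n}(w) \in V$. Define
\[
z \;=\; \varphi^{n}\bigl([\varphi^{-n}(w),\varphi^{-n}(x)]\bigr),
\]
with the order in the bracket reversed relative to the previous lemma. By the choice of $\delta'$, $\varphi^{-n}(z) \in X^\text{s}(\varphi^{-n}(w),\varepsilon') \cap X^\text{u}(\varphi^{-n}(x),\varepsilon')$. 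The unstable membership, together with the adapted-metric contraction of $\varphi^{-1}$ on local unstable sets, gives $z \sim_\text{u} x$. The stable membership, iterated forward by $\varphi$, yields $d(z,w) \leq \lambda^{n}\varepsilon' \leq \delta/2$, so $d(y,z) < \delta$ and $z \in B_\delta(y)$.

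The only subtle point is getting the slots of the bracket in the correct order so that the synchronizing direction at time $-n$ matches the desired asymptotic direction at time $\pm\infty$; this is the one place where the argument is not a literal copy of the preceding proof. The appeal to mixing in the reverse direction, the enlargement of $n$ using the hypothesis on the backward orbit, and the distance estimate via the adapted metric are otherwise routine translations.
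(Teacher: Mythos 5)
Your proposal is correct and matches the paper's approach: the paper's entire proof is to apply the preceding lemma with $\varphi$ replaced by $\varphi^{-1}$, which is exactly the time-reversal symmetry you invoke before writing out the mirrored construction explicitly. Your explicit rerun is also accurate, including the one genuinely delicate point of swapping the bracket slots so that the unstable slot sits at $\varphi^{-n}(x)$ and the stable slot at $\varphi^{-n}(w)$.
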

\begin{proof}
This follows from the previous result by replacing $\varphi$ with $\varphi^{-1}$.
\end{proof}

\begin{lemma}\label{lem:sync-per-stable-unstable-dense}
Let $(X,\varphi)$ be a mixing synchronizing system.  If $p \in X$ is a synchronizing periodic point, then $X^\text{s}(p)$ and $X^\text{u}(p)$ are dense as subsets of $X$.  If $q \in X$ is another synchronizing periodic point, then $X^\text{s}(p) \cap X^\text{u}(q)$ is also dense in $X$.
\end{lemma}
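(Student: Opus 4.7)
The first two assertions follow immediately from the two preceding lemmas. Since $p$ is periodic it lies in its own forward orbit and in its own backward orbit, so $p$ is trivially in the closure of each. As $p$ is synchronizing, the hypotheses of the two previous lemmas are satisfied with $x = x_0 = p$, giving that $X^\text{s}(p)$ and $X^\text{u}(p)$ are dense in $X$.

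For the intersection, fix $y \in X$ and $\delta > 0$; I will produce a point of $X^\text{s}(p) \cap X^\text{u}(q)$ inside $B_\delta(y)$. First, use the density of $X^\text{s}(p)$ to choose $w_1 \in X^\text{s}(p) \cap B_{\delta/2}(y)$. Since $w_1 \sim_\text{s} p$ and $p$ is a synchronizing periodic point, the earlier lemma about synchronizing periodic points implies that $w_1$ is itself synchronizing. Consequently Lemma \ref{lemma:bracket-epsilon-continuity} applies at $w_1$: fixing any $\varepsilon' > 0$ with $\varepsilon' \leq \min\{\eta, \delta/2\}$, there exists $\delta'' > 0$ such that whenever $d(w_1, a), d(w_1, b) < \delta''$ the bracket $[a,b]$ is defined and lies in $X^\text{s}(a, \varepsilon') \cap X^\text{u}(b, \varepsilon')$.

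Next, use the density of $X^\text{u}(q)$ to choose $w_2 \in X^\text{u}(q) \cap B_{\delta''}(w_1)$, and set $z = [w_1, w_2]$. By construction $z \in X^\text{s}(w_1, \varepsilon') \cap X^\text{u}(w_2, \varepsilon')$ with $\varepsilon' \leq \eta$, and the adapted metric property yields $d(\varphi^n(z), \varphi^n(w_1)) \leq \lambda^n \varepsilon' \to 0$ and similarly $d(\varphi^{-n}(z), \varphi^{-n}(w_2)) \to 0$ as $n \to \infty$. Thus $z \sim_\text{s} w_1 \sim_\text{s} p$ and $z \sim_\text{u} w_2 \sim_\text{u} q$, so $z \in X^\text{s}(p) \cap X^\text{u}(q)$. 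Finally,
\[ d(y, z) \leq d(y, w_1) + d(w_1, z) < \tfrac{\delta}{2} + \varepsilon' \leq \delta, \]
so $z \in B_\delta(y)$, completing the argument.

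The only mildly delicate step is ensuring the bracket $z$ is stably/unstably equivalent to (not merely close to) $w_1$ and $w_2$; this is exactly why the adapted metric is needed and why one shrinks $\varepsilon'$ below $\eta$. Everything else is a direct chaining of the two preceding lemmas, the preservation of synchronizing-ness under stable equivalence with a synchronizing periodic point, and the local product structure at $w_1$.
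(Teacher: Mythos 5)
Your proof is correct and follows essentially the same route as the paper: the first part is the same direct application of the two preceding lemmas, and the intersection is obtained, as in the paper, by using density of $X^\text{s}(p)$ and $X^\text{u}(q)$ and then bracketing two nearby points inside a small local product neighborhood of a synchronizing point (with the adapted metric guaranteeing the bracket is genuinely stably/unstably equivalent). The only cosmetic difference is that the paper centers the bracket at an arbitrary synchronizing point, using density of $X_\sync$ and an arbitrarily small rectangular neighborhood, whereas you center it at $w_1 \in X^\text{s}(p)$ itself, justified by the earlier lemma that a point stably equivalent to a synchronizing periodic point is synchronizing; both choices work.
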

\begin{proof}
The first part follows from the previous two lemmas because $p$ is by assumption synchronizing and is in the closure of the forward orbit and also the closure of the backward orbit because it is periodic. 

For the second part, that is to show that $X^\text{s}(p) \cap X^\text{u}(q)$ is also dense in $X$, first let $z$ be any synchronizing point with rectangular neighborhood $R$.  Note that we can make $\diam(R)$ arbitrarily small by the continuity of $[-,-]$.  Then by the density of $X^\text{s}(p)$ and $X^\text{u}(q)$, let $x, y \in R$ be such that $x \in X^\text{s}(p)$ and $y \in X^\text{u}(q)$.  Then let $w = [x, y]$, which is in $X^\text{s}(p) \cap X^\text{u}(q)$.  Since synchronizing points are dense in $X$ and $R$ is an arbitrarily small rectangular neighborhood, it follows that $X^\text{s}(p) \cap X^\text{u}(q)$ is dense in $X$.
\end{proof}

It is important to point out that Lemma \ref{lem:sync-per-stable-unstable-dense} says $X^\text{u}(p)$ and $X^\text{s}(p)$ are dense as subsets of $X$, despite the fact that we have topologized them as described in Section \ref{sec:global-stable-unstable-sets}.  In fact the density of $X^\text{u}(p)$ and $X^\text{s}(p)$ is precisely why we do not use the subspace topology. The next result will used later in the proof of Theorem \ref{th:sync-morita-equivalence}.

\begin{lemma}\label{lem:sync-to-lcs-lcu-existence}
Let $(X,\varphi)$ be a mixing synchronizing system with $p,q \in X$ both synchronizing periodic points.  If $z$ is a synchronizing point, then there exists $x \in X^\text{u}(p)$ and $y \in X^\text{s}(q)$ satisfying \[ x \sim_\lcs z \sim_\lcu y \,. \]
\end{lemma}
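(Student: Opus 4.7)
The plan is to leverage Lemma \ref{lem:rect-nbhd-implies-lcu-lcs}, which already produces the stable and unstable local conjugacies associated to a rectangular neighborhood of a synchronizing point, together with the density statement in Lemma \ref{lem:sync-per-stable-unstable-dense}. The strategy is simply to slide along the local stable/unstable fibres at $z$ to land on points in the global unstable class of $p$ and global stable class of $q$ respectively.

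First I would invoke Proposition \ref{rectangle-nbhd} to fix a product neighborhood $R$ of $z$ with $z \in \interior(R)$. Since $(X,\varphi)$ is mixing and both $p$ and $q$ are synchronizing periodic points, Lemma \ref{lem:sync-per-stable-unstable-dense} guarantees that $X^\text{u}(p)$ and $X^\text{s}(q)$ are each dense in $X$. In particular, each meets the nonempty open set $\interior(R)$, so I can choose
\[ w_1 \in \interior(R) \cap X^\text{u}(p) \quad \text{and} \quad w_2 \in \interior(R) \cap X^\text{s}(q). \]

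Next I would set $x = [z, w_1]$ and $y = [w_2, z]$. From the defining inclusions $[z, w_1] \in X^\text{u}(w_1, \varepsilon)$ and $[w_2, z] \in X^\text{s}(w_2, \varepsilon)$, together with transitivity of the global unstable and stable equivalence relations, we get $x \sim_\text{u} w_1 \sim_\text{u} p$ and $y \sim_\text{s} w_2 \sim_\text{s} q$, so $x \in X^\text{u}(p)$ and $y \in X^\text{s}(q)$. Applying Lemma \ref{lem:rect-nbhd-implies-lcu-lcs}(i) with $z$ as the center of $R$ and $w_1 \in \interior(R)$ yields $x = [z, w_1] \sim_\lcs z$; applying part (ii) with $w_2$ yields $z \sim_\lcu [w_2, z] = y$. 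Concatenating these two relations gives the desired $x \sim_\lcs z \sim_\lcu y$.

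There is essentially no obstacle here: all of the hard work has already been carried out in Lemma \ref{lem:rect-nbhd-implies-lcu-lcs} (which converts bracket geometry into stable and unstable local conjugacies) and in Lemma \ref{lem:sync-per-stable-unstable-dense} (which supplies the needed density in the mixing case). The only verification required in the argument itself is that bracketing with $z$ does not move one out of the global unstable class of $w_1$ or the global stable class of $w_2$, and this is immediate from the definition of the bracket map.
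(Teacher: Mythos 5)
Your proposal is correct and follows essentially the same route as the paper's proof: choose points of $X^\text{u}(p)$ and $X^\text{s}(q)$ inside a product neighborhood of $z$ using the density from Lemma \ref{lem:sync-per-stable-unstable-dense}, bracket them with $z$, and invoke Lemma \ref{lem:rect-nbhd-implies-lcu-lcs}. The only difference is notational (your $w_1, w_2$ are the paper's $x', y'$).
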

\begin{proof}
By Lemma \ref{lem:sync-per-stable-unstable-dense}, $X^\text{u}(p)$ and $X^\text{s}(q)$ are both dense in $X$.  Let $R \cong X^\text{u}(z, \delta_z) \times X^\text{s}(z, \delta_z)$ denote a rectangular neighborhood of $z$.  Then there are $x', y' \in \text{int}(R)$ such that $x' \in X^\text{u}(p)$ and $y' \in X^\text{s}(q)$.  Let $x = [z, x']$ and $y = [y', z]$, then $x \in X^\text{u}(p)$ and $y \in X^\text{s}(q)$.  The result now follows from Lemma \ref{lem:rect-nbhd-implies-lcu-lcs}.
\end{proof}

For the next result we must assume some conditions on the density of the stable and unstable local conjugacy equivalence classes of elements of $X^\text{u}(p)$ and $X^\text{s}(p)$.  Note that these do not hold for all mixing synchronizing systems. However, they do hold for mixing finitely presented systems as we will show in Lemma \ref{lem:fp-lcu-lcs-dense}.  It is important to note that in Lemma \ref{lem:lcs-lcu-to-sync-existence} that we are considering $X^\text{u}(p)$ and $X^\text{s}(p)$ with their respective locally compact Hausdorff topologies as described in Section \ref{sec:global-stable-unstable-sets}.  Hence, for example, $X^\lcs(x) \cap X^\text{u}(p)$ being dense in $X^\text{u}(p)$ is stronger than being dense in $X$.

\begin{lemma}\label{lem:lcs-lcu-to-sync-existence}
Let $(X,\varphi)$ be a mixing synchronizing system with $p,q \in X$ both synchronizing periodic points.  Furthermore assume that, for all $x \in X^\text{u}(p)$ and $y \in X^\text{s}(q)$,
\begin{enumerate}[(i)]
    \item $X^\lcs(x) \cap X^\text{u}(p)$ is dense in $X^\text{u}(p)$, and
    \item $X^\lcu(y) \cap X^\text{s}(q)$ is dense in $X^\text{s}(q)$.
\end{enumerate}
Then if $x \in X^\text{u}(p)$ and $y \in X^\text{s}(q)$, then there exists a synchronizing point $z \in X_\sync$ satisfying \[ x \sim_\lcs z \sim_\lcu y \,. \]
\end{lemma}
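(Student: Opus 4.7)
The plan is to construct $z$ as the bracket $z=[x,y']$ for a suitably chosen $y' \in X^\lcu(y) \cap X^\text{s}(q)$ lying very close to $x$. To begin, note that $x \in X^\text{u}(p)$ with $p$ a synchronizing periodic point forces $x \in X_\sync$, by the earlier lemma guaranteeing that synchronizing is preserved along the stable/unstable classes of synchronizing periodic points. By Proposition~\ref{rectangle-nbhd}, $x$ therefore admits a product neighborhood $R \cong X^\text{u}(x,\delta) \times X^\text{s}(x,\delta)$ with nonempty interior.

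The second ingredient is density. Hypothesis (ii) says $X^\lcu(y) \cap X^\text{s}(q)$ is dense in $X^\text{s}(q)$ with the inductive-limit topology from Section~\ref{sec:global-stable-unstable-sets}; since that topology is finer than the subspace topology inherited from $X$, density persists after coarsening. Combining this with Lemma~\ref{lem:sync-per-stable-unstable-dense}, which furnishes $X^\text{s}(q)$ dense in $X$, we conclude that $X^\lcu(y) \cap X^\text{s}(q)$ is dense in $X$. Since $[x,x]=x \in X_\sync$ and $X_\sync$ is open, continuity of $[-,-]$ produces an open neighborhood $U \subseteq \interior(R)$ of $x$ with $[x,w] \in X_\sync$ for every $w \in U$. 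Density then supplies $y' \in X^\lcu(y) \cap X^\text{s}(q) \cap U$.

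Set $z=[x,y']$, so $z \in X_\sync$ by construction. Applying Lemma~\ref{lem:rect-nbhd-implies-lcu-lcs}(i) to the product neighborhood $R$ of the synchronizing point $x$ with $y' \in \interior(R)$ yields $y' \sim_\lcu [x,y'] \sim_\lcs x$; in particular $z \sim_\lcs x$ and $z \sim_\lcu y'$. Transitivity of $\sim_\lcu$, together with the fact that $y' \sim_\lcu y$ by our choice of $y'$, gives $z \sim_\lcu y$, which combined with $z \sim_\lcs x$ provides the desired chain $x \sim_\lcs z \sim_\lcu y$.

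The only step requiring care is the density translation, namely moving from ``dense in $X^\text{s}(q)$ with the inductive-limit topology'' to ``dense inside an arbitrary open subset of $X$''; this is handled by the topology comparison and Lemma~\ref{lem:sync-per-stable-unstable-dense}. Everything else is a direct invocation of Lemma~\ref{lem:rect-nbhd-implies-lcu-lcs} together with transitivity. Notably, hypothesis (i) is not used in the argument above; it presumably enters for symmetric applications elsewhere in the paper, and one could equally well run the symmetric argument centered at $y$ using (i) in place of (ii).
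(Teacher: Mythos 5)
Your proof is correct, and it is essentially the mirror image of the paper's argument: the paper centers a product neighborhood at $y$ (synchronizing since $y\in X^{\mathrm{s}}(q)$), uses hypothesis (i) together with the density of $X^{\mathrm{u}}(p)$ in $X$ to find $x'\sim_{\mathrm{lcs}}x$ inside that neighborhood, and sets $z=[x',y]$, invoking Lemma \ref{lem:rect-nbhd-implies-lcu-lcs}(ii); you instead center at $x$, use hypothesis (ii) with the density of $X^{\mathrm{s}}(q)$ in $X$ to find $y'\sim_{\mathrm{lcu}}y$ nearby, set $z=[x,y']$, and invoke Lemma \ref{lem:rect-nbhd-implies-lcu-lcs}(i). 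Either single hypothesis suffices for its respective route, so your closing remark that (i) goes unused in your version is accurate (the paper's version symmetrically uses only (i)). Two points where your write-up is actually more careful than the paper's: you spell out the passage from density in the inductive-limit topology on $X^{\mathrm{s}}(q)$ to density in $X$, which the paper uses tacitly, and you explicitly arrange $z\in X_{\mathrm{sync}}$ by shrinking the neighborhood $U$ using openness of $X_{\mathrm{sync}}$ and continuity of $[-,-]$ at $(x,x)\in\mathrm{int}(D_\varepsilon)$; the paper's proof leaves the synchronizing property of $z=[x',y]$ implicit, so your extra step closes a point the published argument does not address head-on. No gaps.
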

\begin{proof}
Let $x \in X^\text{u}(p)$ and $y \in X^\text{s}(q)$.  Since $y$ is synchronizing, let $R$ be a product neighborhood of $y$.  Then since $X^\text{u}(p)$ is dense in $X$ (Lemma \ref{lem:sync-per-stable-unstable-dense}) and $X^\lcs(x)$ is dense $X^\text{u}(p)$ by assumption, there is an $x' \in R$ such that $x \sim_\lcs x'$.  Let $z = [x', y]$.  By Lemma \ref{lem:rect-nbhd-implies-lcu-lcs} we have \[ x \sim_\lcs x' \sim_\lcs z \sim_\lcu y \,. \]  Hence $x \sim_\lcs z$ and the result is proved.
\end{proof}

\section{$C^\ast$-Algebras from Expansive Dynamical Systems}\label{c-star-algebras}

In this section we construct $C^\ast$-algebras from the local conjugacy equivalence relations in the previous chapter.  This is following the work of Thomsen \cite{thomsen2010c} and Putnam \cite{putnam_1996, putnam99}.  Thomsen constructs the homoclinic algebra for any expansive dynamical system, and the heteroclinic algebras for expansive systems with dense periodic points \cite{thomsen2010c}.  These generalize constructions in the Smale space case \cite{putnam_1996}.  We will introduce these algebras and certain variants of them (see Remark \ref{rem:differentalgebras}) and then study their structure in the particular case of synchronizing systems.

\subsection{The Homoclinic Algebra}\label{sec:homoclinic-algebra}

We now want to construct a $C^\ast$-algebra from the local conjugacy relation for an expansive dynamical system.  We first define the groupoid \[ G^\lc(X, \varphi) = \left\{ (x,y) \in X \times X \mid x \sim_\lc y \right\} \] with groupoid composition given by $(x,y)(y',z) = (x,z)$ whenever $y = y'$.  However, instead of topologizing $G^\lc(X, \varphi)$ as a subspace of $X \times X$, we generate a topology from the sets $\{(z,\gamma(z)) \mid z \in U\}$ where $(U, V, \gamma)$ is a local conjugacy.  The fact that these sets are a base for a topology on $G^\lc(X, \varphi)$ follows from Lemma \ref{lemma:lc-unique}, also see \cite{thomsen2010c}.  With this topology, $G^\lc(X, \varphi)$ is indeed an \'{e}tale groupoid, see \cite[Theorem 1.7]{thomsen2010c}.

\begin{definition}
The \emph{homoclinic algebra} $A(X,\varphi)$ of an expansive dynamical system $(X,\varphi)$ is the reduced $C^\ast$-algebra $C_r^\ast(G^\lc(X, \varphi))$ in the sense of Renault \cite{renault1980groupoid}.
\end{definition}

If $(X,\varphi)$ is a Smale space, then by \ref{lem:smale-h-implies-lc},the $C^\ast$-algebra $A(X,\varphi)$ is exactly the asymptotic algebra defined by Putnam in \cite{putnam_1996}. It also is worth mentioning that the topology on $G^\lc(X,\varphi)$ is distinct from subspace topology obtained from $G^\lc(X, \varphi)\subseteq X\times X$ with the product topology.  



\subsection{Asymptotic Commutativity}

We prove a result that extends Putnam's result on the asymptotic commutativity of the homoclinic algebra for a Smale space \cite{putnam_1996}.  Note that this holds for any expansive dynamical system.  Some remarks are in order as to why this proof holds in wider generality.  Essentially this boils down to the fact that the local conjugacy relation incorporates local product structure, and is stronger than the homoclinic relation.  Note also that we do no require any recurrence properties, and they are indeed also not required in Putnam's proof of Theorem 3.2 in \cite{putnam_1996}.  However, a similar result involving the synchronizing heteroclinic algebras (see Section \ref{sec:heteroclinic-algebras}), one would likely need to require mixing as in the Smale space case.

There is a natural $*$-automorphism $\alpha$ of $A(X, \varphi)$ defined by \[ (\alpha(f))(x,y) = f(\varphi^{-1}(x), \varphi^{-1}(y)) \] for $f \in C_\text{c}(G^\lc(X,\varphi))$, see \cite{thomsen2010c}.  We wish to show that $(A(X, \varphi),\alpha)$ is asymptotically abelian in the following sense.

\begin{theorem}
Suppose $(X,\varphi)$ is an expansive dynamical system, and let $\alpha$ be the $*$-automorphism induced by $\varphi$.  Then for all $a,b \in A(X, \varphi)$, \[ \lim_{n\to\infty} \| \alpha^n(a)b - b\alpha^n(a) \| = 0 \,.\]  In other words, $(A(x,\varphi), \alpha)$ is \emph{asymptotically abelian}.
\end{theorem}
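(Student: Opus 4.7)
The plan is to adapt Putnam's asymptotic abelianness argument for the homoclinic algebra of a Smale space \cite{putnam_1996} to the expansive setting. The Smale space proof uses the bracket map, but what is actually needed is the uniform asymptotic convergence built into the definition of a local conjugacy, which is why the same argument carries over to an arbitrary expansive system. By continuity of multiplication and of $\alpha$, and since $C_c(G^\lc(X,\varphi))$ is dense in $A(X,\varphi)$, it suffices to prove the claim for $a$ and $b$ each supported on a single bisection: $\supp(a) \subseteq B_a = \{(z,\gamma_a(z)) : z \in U_a\}$ and $\supp(b) \subseteq B_b = \{(z,\gamma_b(z)) : z \in U_b\}$ coming from local conjugacies $(U_a, W_a, \gamma_a)$ and $(U_b, W_b, \gamma_b)$.

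Since $(\alpha^n f)(x,y) = f(\varphi^{-n}(x),\varphi^{-n}(y))$, the support of $\alpha^n(a)$ is the graph of the local conjugacy $\gamma_a^{(n)} := \varphi^n \circ \gamma_a \circ \varphi^{-n}$ on $\varphi^n(U_a)$. A direct computation of convolutions on bisections then gives
\[
(\alpha^n(a)\,b)(x,\gamma_b(\gamma_a^{(n)}(x))) = a\bigl(\varphi^{-n}(x),\gamma_a(\varphi^{-n}(x))\bigr)\, b\bigl(\gamma_a^{(n)}(x),\gamma_b(\gamma_a^{(n)}(x))\bigr),
\]
and symmetrically $(b\,\alpha^n(a))(x,\gamma_a^{(n)}(\gamma_b(x))) = b(x,\gamma_b(x))\, a(\varphi^{-n}(\gamma_b(x)),\gamma_a(\varphi^{-n}(\gamma_b(x))))$. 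Two asymptotic estimates, both coming straight from the definition of a local conjugacy, now drive the argument. The $n \to +\infty$ half of the local conjugacy condition for $\gamma_a$ yields $\sup_x d(x, \gamma_a^{(n)}(x)) \to 0$; combined with uniform continuity of $b$ on its compact support, this forces $b(\gamma_a^{(n)}(x), \gamma_b(\gamma_a^{(n)}(x))) \to b(x,\gamma_b(x))$ uniformly in $x$. The $n \to -\infty$ half applied to $\gamma_b$ yields $\sup_x d(\varphi^{-n}(x), \varphi^{-n}(\gamma_b(x))) \to 0$; combined with uniform continuity of $a$, this forces $a(\varphi^{-n}(x), \gamma_a(\varphi^{-n}(x)))$ to approach $a(\varphi^{-n}(\gamma_b(x)), \gamma_a(\varphi^{-n}(\gamma_b(x))))$ uniformly in $x$. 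Together, the pointwise values of $\alpha^n(a)\,b$ and $b\,\alpha^n(a)$ on their respective bisections become asymptotically equal.

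To finish I would bound $\|\alpha^n(a)b - b\alpha^n(a)\|$ by the standard $I$-norm estimate for the reduced groupoid $C^\ast$-norm; for a function supported on the union of two bisections this is controlled by the sup norms of the values together with how much the two bisections fail to coincide. The hard part is precisely this last point: the two bisections $\{(x, \gamma_b(\gamma_a^{(n)}(x)))\}$ and $\{(x, \gamma_a^{(n)}(\gamma_b(x)))\}$ are in general distinct for finite $n$, so the commutator is not supported on a single bisection, and the ``edge'' contribution from points $x$ where only one of the two convolutions is defined must be handled separately. This is controlled by the uniform convergence $\gamma_a^{(n)} \to \mathrm{id}$ together with continuity of $\gamma_b$, which force both bisections to collapse onto $B_b$ and their symmetric difference to be eventually contained in any prescribed open neighborhood of $B_b$; combined with uniform continuity of $a$ and $b$, this controls the edge contribution and yields $\|\alpha^n(a) b - b \alpha^n(a)\| \to 0$.
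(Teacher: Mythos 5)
Your proposal is correct and follows essentially the same route as the paper's proof: reduce to $C_c$-functions supported on bisections, compute $\alpha^n(a)b$ and $b\alpha^n(a)$ as products of values along the composed conjugacies $\gamma_b\circ\varphi^n\gamma_a\varphi^{-n}$ and $\varphi^n\gamma_a\varphi^{-n}\circ\gamma_b$, and use the uniform limits in the definition of local conjugacy together with uniform continuity of $a$, $b$, $\gamma_a$, $\gamma_b$ on (slightly fattened) compact supports to make the two products uniformly close, then bound the reduced norm by sup-norms on bisections. The ``edge''/two-bisection issue you flag is exactly what the paper handles via the enlarged compacts $K_a'$, $K_b'$ and uniform continuity, so your treatment of that final step matches the paper's in substance.
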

\begin{proof}
It suffices to prove this for $a,b \in C_\text{c}(G^\lc(X, \varphi))$, and furthermore we also assume
\begin{align*}
    \text{supp}(a) = \{ (z, \gamma_a(z)) \mid z \in K_a \} &\subseteq \{ (z, \gamma_a(z)) \mid z \in U_a \} \text{ and} \\
    \text{supp}(b) = \{ (z, \gamma_b(z)) \mid z \in K_b \}&\subseteq \{ (z, \gamma_b(z)) \mid z \in U_b \}
\end{align*}
where $K_a \subseteq U_a$ and $K_b \subseteq U_b$ are compact and $(U_a,V_a,\gamma_a)$ and $(U_b,V_b,\gamma_b)$ are both local conjugacies.

Let $\varepsilon_0 > 0$ be small enough that all $z \in X$ within $\varepsilon_0$ of $K_a$ are in $U_a$, and likewise for all $z \in X$ within $\varepsilon_0$ of $K_b$.  Then, since $\gamma_a$ and $\gamma_b$ are both local conjugacies, we can choose $N_0 \geq 0$ such that $d(\varphi^n(z), \varphi^n(\gamma_a(z))) < \varepsilon_0$ and $d(\varphi^n(z'), \varphi^n(\gamma_b(z'))) < \varepsilon_0$ for all $z \in U_a$, $z' \in U_b$, and $|n| \geq N_0$.  Lastly define $K_a' = \{ z \in X \mid \exists z' \in K_a \text{ s.t. } d(z,z') \leq \varepsilon_0 \}$ and $K_b'$ similarly.  Thus we have $K_a \subseteq K_a' \subseteq U_a$, $K_b \subseteq K_b' \subseteq U_b$ and both $K_a'$ and $K_b'$ are compact.

Now by our assumptions of the support of $a$ and $b$, we must have, for any $n$,
\begin{align*}
    (\alpha^n(a)b)(x,y) &= \sum_{(x,z) \in G^\lc(X, \varphi)} a(\varphi^{-n}(x),\varphi^{-n}(z))\,b(z,y) \\
    &= a\big(\varphi^{-n}(x), \gamma_a \varphi^{-n}(x)\big)\,b\big(\varphi^n \gamma_a \varphi^{-n}(x), \gamma_b \varphi^n \gamma_a \varphi^{-n}(x)\big)\\
    \\
    (b\alpha^n(a))(x,y) &= \sum_{(x,z) \in G^\lc(X, \varphi)} b(x,z)\,a(\varphi^{-n}(z),\varphi^{-n}(y)) \\
    &= b\big(x, \gamma_b(x)\big)\,a\big( \varphi^{-n}\gamma_b(x), \gamma_a \varphi^{-n} \gamma_b(x) \big) \,.
\end{align*}
Note that $(\alpha^n(a)b)(x,y)$ will be zero unless $x \in \varphi^n(K_a)$ and $\gamma_a ( \varphi^{-n} (x) ) \in \varphi^{-n}(K_b)$.  Now assume $n \geq N_0$, then we have \[ d(x, \varphi^n(\gamma_a(\varphi^{-n}(x)))) = d(\varphi^n(\varphi^{-n}(x)), \varphi^n(\gamma_a(\varphi^{-n}(x)))) < \varepsilon_0 \,. \]  By our assumptions this implies $x \in K_b' \subseteq U_b$.  Hence $\gamma_b(x)$ is defined, and we also have \[ d(\varphi^{-n}(x), \varphi^{-n}(\gamma_b(x))) < \varepsilon_0 \] showing also $\gamma_b(x) \in \varphi^n(K_a') \subseteq \varphi^{n}(U_a)$.  Likewise one may show that if $(b\alpha^n(a))(x,y)$ is non-zero, then $x \in \varphi^n(K_a')$ and $\gamma_a(\varphi^{-n}(x)) \in \varphi^{-n}(K_b')$ for $n \geq N_0$.  Hence, without loss of generality, if $(\alpha^n(a)b - b\alpha^n(a))(x,y)$ is non-zero then we can assume the above is true for $n \geq N_0$.

For simplicity, let us define the following quantities, which are all well-defined by the above.
\begin{align*}
    x' &= \varphi^n \gamma_a \varphi^{-n} (x) \\
    x'' &= \gamma_b(x) \\
    y &= \gamma_b \varphi^n \gamma_a \varphi^{-n} (x) = \gamma_b (x') \\
    y' &= \varphi^n \gamma_a \varphi^{-n} \gamma_b (x) = \varphi^n \gamma_a \varphi^{-n} (x'')
\end{align*}
Then in total we can rewrite $(\alpha^n(a)b - b\alpha^n(a))(x,y)$ as \[ (\alpha^n(a)b - b\alpha^n(a))(x,y) = a\big(\varphi^{-n}(x), \varphi^{-n}(x')\big)\,b(x',y) - b(x, x'')\,a\big(\varphi^{-n}(x''), \varphi^{-n}(y')\big) \,. \]
We then obtain the following inequality by adding and subtracting a cross-term to the right-hand side above
\begin{multline*}
\big| (\alpha^n(a)b - b\alpha^n(a))(x,y) \big| \leq \Big| b(x',y) \Big(a\big(\varphi^{-n}(x), \varphi^{-n}(x')\big) - a\big(\varphi^{-n}(x''), \varphi^{-n}(y')\big) \Big) \Big| \\
+ \Big| a\big(\varphi^{-n}(x''), \varphi^{-n}(y')\big) \Big( b(x,x'') - b(x',y) \Big) \Big|
\end{multline*}
When restricted to $K_b'$ the map $\gamma_b$ is uniformly continuous.  Coupled with the fact that $x$ is stably equivalent to $x'$ this means for any $\delta$ we can find an $N \geq N_0$ such that $d(x'',y) = d(\gamma_b(x), \gamma_b(x')) < \delta$ for all $n \geq N$ and $x,x' \in K_b'$.  By the same argument, since $x$ and $x''$ are unstably equivalent, we can also make $d(\varphi^{-n}(x'),\varphi^{-n}(y')) = d(\gamma_a\phi^{-n}(x), \gamma_a\phi^{-n}(x'')) < \delta$ for $x,x'' \in \varphi^n(K_a')$ small enough and all $n$ large enough.  Together with the fact that $a$ and $b$ are continuous functions, we can then, for some fixed $\epsilon > 0$, find an $N$ such that
\begin{align*}
    \Big| a\big(\varphi^{-n}(x), \varphi^{-n}(x')\big) - a\big(\varphi^{-n}(x''), \varphi^{-n}(y')\big) \Big| &< \frac{\epsilon}{2\|b\|} \text{, and} \\
    \big| b(x,x'') - b(x',y) \big| &< \frac{\epsilon}{2\|a\|}
\end{align*}
for all $n \geq N$.  Moreover, these inequalities hold for all $x \in \varphi^n(K_a') \cap K_b'$ because by assumption of local conjugacy we can make both $d(x,x') = d(x,\varphi^n\gamma_a\varphi^{-n}(x))$ and $d(x,x'') = d(x,\gamma_b(x))$ uniformly small for all $n$ large enough.  This gives us 
\[ \big\| \alpha^n(a)b - b\alpha^n(a) \big\| < \|b\| \frac{\epsilon}{2\|b\|} + \|a\| \frac{\epsilon}{2\|a\|} = \epsilon \]
and so we can conclude $\displaystyle \lim_{n \to \infty} \big\| \alpha^n(a)b - b\alpha^n(a) \big\| = 0$.

\end{proof}

\subsection{The Synchronizing Heteroclinic Algebras}\label{sec:heteroclinic-algebras}

We construct two $C^\ast$ algebras called the \emph{synchronizing heteroclinic} algebras.  However, since we are specializing to the case of synchronizing systems, we make some modifications to Thomsen's construction in \cite{thomsen2010c}.  In particular Thomsen utilizes the set of all \emph{post-periodic} points, which is defined as the set \[ X^\text{u} = \bigcup_{p \in \Per(X,\varphi)} X^\text{u}(p) \,. \]  This involves using all the periodic points in the dynamical system, and additionally requires the assumption that periodic points are dense.

However in synchronizing systems we have a preponderance of periodic points which are synchronizing and thus behave like the periodic points in a Smale space.  Our strategy is to restrict Thomsen's construction to just these synchronizing periodic points.  In fact, the analogy that synchronizing periodic points behave like the periodic points in a Smale space holds to the extent that the synchronizing heteroclinic algebras that we construct for different choices of synchronizing periodic points are all Morita equivalent, which parallels the result of Putnam and Spielberg \cite{putnam99}.

\begin{definition}
Let $(X,\varphi)$ be a synchronizing system and let $P \subseteq X$ be a finite set of synchronizing periodic points.  Then we define the \'{e}tale groupoids
\begin{enumerate}[(i)]
    \item $G^\lcs(X, \varphi, P) = \{ (x, y) \in X^\text{u}(P) \times X^\text{u}(P) \mid x \sim_\lcs y \}$, and
    \item $G^\lcu(X, \varphi, P) = \{ (x, y) \in X^\text{s}(P) \times X^\text{s}(P) \mid x \sim_\lcu y \}$.
\end{enumerate}
\end{definition}
\noindent Some remarks are in order:
\begin{enumerate}
\item We are using the inductive limit topology on $X^\text{u}(P)$ and $X^\text{s}(P)$ as discussed in Section \ref{sec:global-stable-unstable-sets}.  These groupoids are topologized from the stable and unstable local conjugacies respectively in the same way as with the homoclinic algebra.  
\item The unit space of $G^\lcs(X, \varphi, P)$ is $X^\text{u}(P)$ and the unit space of $G^\lcu(X, \varphi, P)$ is $X^\text{s}(P)$. 
\item Finally, it is often useful to assume that the set $P$ is $\varphi$-invariant; this is in fact needed if one wants to consider the $C^*$-automorphisms associated to $\varphi$. However, $\varphi$-invariance is not need for the construction of the groupoids and their $C^*$-algebras. This flexibility will be useful when we want to consider $P$ equal to the set containing a single periodic point, see for example Theorem \ref{th:sync-morita-equivalence} and Lemma \ref{lem:fp-lcu-lcs-dense}.
\end{enumerate}


\begin{definition}
Let $(X,\varphi)$ be a synchronizing system and let $P \subseteq X$ be a finite set of synchronizing periodic points.  Then we define
\begin{enumerate}[(i)]
    \item the \emph{stable synchronizing heteroclinic algebra}, $S(X,\varphi,P) = C^\ast_r(G^\lcs(X, \varphi, P))$, and
    \item the \emph{unstable synchronizing heteroclinic algebra}, $U(X,\varphi,P) = C^\ast_r(G^\lcu(X, \varphi, P))$
\end{enumerate}
which are both reduced groupoid $C^\ast$-algebras in the sense of Renault \cite{renault1980groupoid}.
\end{definition}

In the case of $(X,\varphi)$ being a Smale space, the stable and unstable synchronizing heteroclinic algebras are exactly the stable and unstable algebras defined by Putnam \cite{putnam99}.

\begin{theorem}\label{theorem:stable-unstable-independent-of-choice-of-P}
Let $(X,\varphi)$ be a synchronizing system and let $P,P' \in X$ be two finite sets of synchronizing periodic points.  Then
\begin{enumerate}[(i)]
    \item $S(X,\varphi,P)$ is Morita equivalent to $S(X,\varphi,P')$, and
    \item $U(X,\varphi,P)$ is Morita equivalent to $U(X,\varphi,P')$.
\end{enumerate}
\end{theorem}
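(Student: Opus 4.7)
The plan is to realize both $G^\lcs(X,\varphi,P)$ and $G^\lcs(X,\varphi,P')$ as reductions of a single larger \'{e}tale groupoid to full clopen subsets of its unit space, and then invoke the standard Morita equivalence result for such reductions (as in Muhly--Renault--Williams). Part (ii) will be handled by the symmetric argument, with $\sim_\lcu$ and $X^\text{s}(\cdot)$ replacing $\sim_\lcs$ and $X^\text{u}(\cdot)$ throughout, using the $y \in X^\text{s}(q)$ half of Lemma \ref{lem:sync-to-lcs-lcu-existence} in place of its $x \in X^\text{u}(p)$ half.

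Set $Q = P \cup P'$, which is itself a finite set of synchronizing periodic points, and form the groupoid $G^\lcs(X,\varphi,Q)$ with unit space $X^\text{u}(Q) = X^\text{u}(P) \cup X^\text{u}(P')$. Under the inductive limit topology of Section \ref{sec:global-stable-unstable-sets}, each $X^\text{u}(p)$ (and hence each of $X^\text{u}(P)$ and $X^\text{u}(P')$) is clopen in $X^\text{u}(Q)$. By inspection of the definitions, the reductions of $G^\lcs(X,\varphi,Q)$ to $X^\text{u}(P)$ and $X^\text{u}(P')$ coincide, as topological groupoids, with $G^\lcs(X,\varphi,P)$ and $G^\lcs(X,\varphi,P')$ respectively.

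The essential step is to establish fullness: every $\sim_\lcs$-orbit in $X^\text{u}(Q)$ meets both $X^\text{u}(P)$ and $X^\text{u}(P')$. Fix $z \in X^\text{u}(p')$ for some $p' \in P'$. Since $p'$ is a synchronizing periodic point, the earlier lemma on propagation of the synchronizing property along unstable equivalence classes shows that $z$ itself is synchronizing. Pick any $p \in P$ and apply Lemma \ref{lem:sync-to-lcs-lcu-existence} with $p'$ as the $q$-input, $p$ as the $p$-input, and $z$ as the synchronizing point; this produces $x \in X^\text{u}(p) \subseteq X^\text{u}(P)$ with $x \sim_\lcs z$. The symmetric statement that every $\sim_\lcs$-orbit meets $X^\text{u}(P')$ holds by the same argument. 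The standard Morita equivalence theorem for reductions of an \'{e}tale groupoid to full clopen subsets of its unit space then yields that both $S(X,\varphi,P)$ and $S(X,\varphi,P')$ are Morita equivalent to $C^\ast_r(G^\lcs(X,\varphi,Q))$, and hence to each other, proving (i).

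The principal obstacle is the fullness step, whose proof relies on Lemma \ref{lem:sync-to-lcs-lcu-existence} and hence on the density statements of Lemma \ref{lem:sync-per-stable-unstable-dense}; those density results are established only for mixing synchronizing systems. Accordingly, the theorem is most natural under a mixing hypothesis (parallelling the Smale space case of Putnam--Spielberg \cite{putnam99}), and otherwise would require a preliminary spectral-decomposition-type reduction to the mixing case for the irreducible system $(X,\varphi)$. A secondary but routine point, which I would verify carefully, is that the reduction groupoids inherit the topology induced by the local stable conjugacy bisections, so that the identification with $G^\lcs(X,\varphi,P)$ and $G^\lcs(X,\varphi,P')$ is a homeomorphism of \'{e}tale groupoids and not merely an algebraic isomorphism.
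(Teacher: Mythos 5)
Your proposal is correct and is, at its core, the same argument as the paper's: the paper also passes to the union $P \sqcup P'$ and reduces everything to the fullness statement that every $y \in X^\text{u}(P \sqcup P')$ is $\sim_\lcs$-equivalent to some $x \in X^\text{u}(P)$, which it proves by taking a product neighborhood $R$ of $y$, using density of $X^\text{u}(P)$ (Lemma \ref{lem:sync-per-stable-unstable-dense}) to find $z \in R \cap X^\text{u}(P)$, and setting $x = [y,z]$ via Lemma \ref{lem:rect-nbhd-implies-lcu-lcs} --- exactly what your appeal to Lemma \ref{lem:sync-to-lcs-lcu-existence} unpacks to. The only packaging difference is the equivalence machinery: the paper verifies that the open inclusion $G^\lcs(X,\varphi,P) \hookrightarrow G^\lcs(X,\varphi,P \sqcup P')$ is a weak equivalence in the sense of Farsi--Kumjian--Pask--Sims and then invokes Sims--Williams for reduced $C^\ast$-algebras, whereas you phrase it as reduction of $G^\lcs(X,\varphi,P\cup P')$ to the full clopen subset $X^\text{u}(P)$ of its unit space; these are interchangeable, but note that Muhly--Renault--Williams by itself gives Morita equivalence of \emph{full} groupoid $C^\ast$-algebras, so for the reduced algebras you should cite the Sims--Williams extension, as the paper does. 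Your caveat about the mixing hypothesis is well observed but does not distinguish your argument from the paper's: the published proof invokes the same mixing-only density lemma (Lemma \ref{lem:sync-per-stable-unstable-dense}) even though the theorem is stated for synchronizing, i.e.\ merely irreducible, systems, so in the irreducible non-mixing case both arguments would need the kind of component-wise (spectral-decomposition-style) refinement you mention, or a density statement for $X^\text{u}$ of a full periodic orbit rather than a single periodic point.
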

\begin{proof}
We will only show the proof in the stable case.  We will prove that the groupoids $G^\lcs(X,\varphi,P)$ and $G^\lcs(X,\varphi,P \sqcup P')$ are Morita equivalent in the sense of \cite{farsi2018ample}.  By \cite[Proposition 3.10]{farsi2018ample} and \cite{sims12}, this Morita equivalence holds for reduced groupoid $C^\ast$-algebras.  In our situation, we have a functor \[ \iota : G^\lcs(X,\varphi,P) \to G^\lcs(X,\varphi,P \sqcup P') \] which is a continuous, open, inclusion of \'{e}tale groupoids.  By \cite[Definition 3.4]{farsi2018ample}, the functor $\iota$ is a weak equivalence if
\begin{enumerate}[(i)]
    \item the map $\{\gamma \in G^\lcs(X,\varphi,P \sqcup P') \mid r(\gamma) \in X^\text{u}(P) \} \to X^\text{u}(P \sqcup P')$ defined by \[ \gamma \mapsto s(\gamma) \] is an \'{e}tale surjection (\ie a surjective local homeomorphism, see \cite{moerdijk91}), and
    \item $G^\lcs(X,\varphi,P)$ is isomorphic to \[  \{\gamma \in G^\lcs(X,\varphi,P \sqcup P') \mid r(\gamma), s(\gamma) \in X^\text{u}(P) \} \,. \]
\end{enumerate}
Note that condition (ii) is true since the given set is simply the restriction of $G^\lcs(X,\varphi,P \sqcup P')$ to $G^\lcs(X,\varphi,P)$.  In order to show (i), we must show that for every $y \in X^\text{u}(P \sqcup P')$, there is an $x \in X^\text{u}(P)$ such that $x \sim_\lcs y$.  If $y \in X^\text{u}(P) \subseteq X^\text{u}(P \sqcup P')$, then we can just take $x = y$.  Otherwise, assume $y \in X^\text{u}(P')$ and since $y$ is synchronizing we let $R$ be a product neighborhood of $y$.  Since $X^\text{u}(P)$ is dense in $X$ (by Lemma \ref{lem:sync-per-stable-unstable-dense}), there exists $z \in R$ such that $z \in X^\text{u}(P)$.  Let $x = [y,z]$ so that $x \in X^\text{u}(P)$.  Then, by Lemma \ref{lem:rect-nbhd-implies-lcu-lcs}, we have $x \sim_\lcs y$.  Hence for all $y \in X^\text{u}(P \sqcup P')$ there is a $\gamma \in G^\lcs(X,\varphi,P\sqcup P')$ satisfying $s(\gamma) = y$.  

Furthermore, if $x \sim_\lcs y$ then for some $\delta,\delta' > 0$ there is a stable local conjugacy $\hat{\gamma} : X^\text{u}(x, \delta) \to X^\text{u}(y, \delta')$ which a homeomorphism onto its image.  Then $\{(z, \hat{\gamma}(z)) \mid z \in X^\text{u}(x, \delta)\}$ is a neighborhood of $\gamma$ in $G^\lcs(X,\varphi,P\sqcup P')$.  The map $\{(z, \hat{\gamma}(z)) \mid z \in X^\text{u}(x, \delta)\} \mapsto s(z, \hat{\gamma}(z)) = z$ is a homeomorphism since its inverse, $z \mapsto (z, \hat{\gamma}(z))$, is continuous (see \cite[Theorem 1.7]{thomsen2010c}).

Hence we have shown, without loss of generality, that $S(X,\varphi,P)$ is Morita equivalent to $S(X,\varphi,P \sqcup P')$ and $S(X,\varphi,P')$ is Morita equivalent to $S(X,\varphi,P \sqcup P')$.  Hence by transitivity we obtain a Morita equivalence between $S(X,\varphi,P)$ and $S(X,\varphi,P')$.
\end{proof}

\begin{remark} \label{rem:differentalgebras}
One could also define $C^*$-algebras using the set of all the synchronizing periodic points. These algebras would also be Morita equivalent to $S(X,\varphi,P)$ and $U(X, \varphi, P)$, respectively. The details are very similar to \cite[Lemma 4.16]{thomsen2010c}. On the other hand, Thomsen's construction of the heteroclinic algebras uses all the periodic points of the system; these algebras are not in general Morita equivalent to $S(X,\varphi,P)$ and $U(X, \varphi, P)$. 

Our choice to use synchronizing periodic points is based on the Morita equivalence between $S(X,\varphi,P) \otimes U(X, \varphi, P)$ and $\I_\sync(X,\varphi)$ that will be proved in the next section. Also, the choice to use only finitely many periodic orbits is based on the proofs in the finitely presented case, see Section \ref{denseFinitelyPresentedSubSec}.
\end{remark}

\subsection{The Synchronizing Ideal}

\begin{definition}\label{def:g-invariant}
Suppose $G$ is a locally compact Hausdorff \'{e}tale groupoid.  Then $X \subseteq G^0$ is called \emph{$G$-invariant} if for any $\gamma \in G$, $r(\gamma) \in X$ if and only if $s(\gamma) \in X$, 
\end{definition}

\begin{prop}\label{theorem:sync-ideal}
Let $(X,\varphi)$ be a synchronizing system with $A(X,\varphi)$ its homoclinic algebra, then there is an ideal $\mathcal{I}_\text{sync}(X,\varphi) \subseteq A(X,\varphi)$ determined by the synchronizing points in $X$.
\end{prop}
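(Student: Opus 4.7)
The plan is to exhibit $\mathcal{I}_{\sync}(X,\varphi)$ as the reduced $C^*$-algebra of the restriction of the local conjugacy groupoid to the open invariant subset $X_{\sync} \subseteq X$, realized inside $A(X,\varphi)$ via extension by zero. The general principle is that if $G$ is a locally compact Hausdorff étale groupoid and $U \subseteq G^0$ is an open $G$-invariant subset, then $G|_U := r^{-1}(U) = s^{-1}(U)$ is an open étale subgroupoid whose reduced $C^*$-algebra embeds as a (closed two-sided) ideal in $C^*_r(G)$.

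First I would verify the two hypotheses needed to invoke this principle. Openness of $X_{\sync}$ in $X = G^{\lc}(X,\varphi)^{(0)}$ is immediate from Proposition \ref{rectangle-nbhd}, since every synchronizing point has a product neighborhood entirely contained in $X_{\sync}$. For $G^{\lc}$-invariance in the sense of Definition \ref{def:g-invariant}, I would observe that if $(x,y) \in G^{\lc}(X,\varphi)$ then $x \sim_{\lc} y$, and by Proposition \ref{prop:lc-preserves-sync} we have $x \in X_{\sync}$ if and only if $y \in X_{\sync}$. Hence $r(x,y) = x$ is synchronizing if and only if $s(x,y) = y$ is synchronizing, giving invariance.

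Next I would define
\[
G^{\lc}_{\sync}(X,\varphi) := \{(x,y) \in G^{\lc}(X,\varphi) \mid x,y \in X_{\sync}\},
\]
which by invariance equals both $r^{-1}(X_{\sync})$ and $s^{-1}(X_{\sync})$, and hence is open in $G^{\lc}(X,\varphi)$. It inherits the étale groupoid structure from $G^{\lc}(X,\varphi)$, and its unit space is $X_{\sync}$. Because the inclusion $G^{\lc}_{\sync}(X,\varphi) \hookrightarrow G^{\lc}(X,\varphi)$ is an open embedding of étale groupoids, extension by zero gives an inclusion $C_c(G^{\lc}_{\sync}(X,\varphi)) \hookrightarrow C_c(G^{\lc}(X,\varphi))$ that is a $*$-homomorphism.

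Finally I would upgrade this to the reduced $C^*$-algebras. The invariance of $X_{\sync}$ ensures that for $f \in C_c(G^{\lc}(X,\varphi))$ and $g \in C_c(G^{\lc}_{\sync}(X,\varphi))$, the convolutions $f*g$ and $g*f$ are again supported in $G^{\lc}_{\sync}(X,\varphi)$ (any composable pair with one factor having source/range in $X_{\sync}$ forces the other factor to have source/range in $X_{\sync}$). The map on $C_c$ extends to an isometric $*$-homomorphism on reduced completions because the regular representations on the fibers over $X_{\sync}$ are compatible — this is a standard consequence of the open invariant subset picture (see the analogous argument in \cite[Chapter 4]{thomsen2010c}). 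Defining
\[
\mathcal{I}_{\sync}(X,\varphi) := C^*_r\bigl(G^{\lc}_{\sync}(X,\varphi)\bigr) \subseteq A(X,\varphi),
\]
the ideal property $A(X,\varphi)\cdot \mathcal{I}_{\sync}(X,\varphi) \subseteq \mathcal{I}_{\sync}(X,\varphi)$ and its symmetric counterpart follow from the support computation above by a density argument. The main technical point to handle carefully will be the passage from $C_c$ to the reduced completion, namely verifying that the canonical inclusion is isometric and has closed image equal to the kernel of restriction to the complement $X \setminus X_{\sync}$; this is where the openness and invariance of $X_{\sync}$ do the essential work.
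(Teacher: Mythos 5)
Your proposal is correct and follows essentially the same route as the paper: show $X_{\sync}$ is open and $G^{\lc}(X,\varphi)$-invariant (you correctly invoke Proposition \ref{prop:lc-preserves-sync} for invariance), restrict the groupoid to it, and take the reduced $C^\ast$-algebra as the ideal. The only difference is that the paper dispatches the final step by citing \cite[Theorem 3.4.8]{putnamNotes19} for the standard fact that the restriction of an \'{e}tale groupoid to an open invariant subset of the unit space yields an ideal in the reduced groupoid $C^\ast$-algebra, whereas you sketch that standard argument directly.
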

\begin{proof}
By Lemma \ref{lemma:sync-invariant-under-map} and Proposition \ref{theorem:sync-points-dense}, $X_\sync \subseteq X \cong \left(G^\lc(X, \varphi)\right)^0$ is an open $G^\lc(X,\varphi)$-invariant set.  Define $G^\lc_\sync(X, \varphi) = r^{-1}(X_\sync)$, then by \cite[Theorem 3.4.8]{putnamNotes19}, \[ \I_\sync(X,\varphi) = C^\ast_r(G^\lc_\sync(X, \varphi)) \] is an ideal of $A(X,\varphi)$.
\end{proof}

Recall the assumptions of Lemma \ref{lem:lcs-lcu-to-sync-existence} are that, for a mixing synchronizing system $(X,\varphi)$ and synchronizing periodic points $p,q \in X$,
\begin{itemize}
    \item $X^\lcs(x) \cap X^\text{u}(p)$ is dense in $X^\text{u}(p)$, and
    \item $X^\lcu(y) \cap X^\text{s}(q)$ is dense in $X^\text{s}(q)$
\end{itemize}
for all $x \in X^\text{u}(p)$ and $y \in X^\text{s}(q)$.  In Lemma \ref{lem:fp-lcu-lcs-dense} we will show that the above assumptions hold for finitely presented systems, in which case Theorem \ref{th:sync-morita-equivalence} holds.

\begin{theorem}\label{th:sync-morita-equivalence}
Let $(X,\varphi)$ be a mixing synchronizing system and $p$ a synchronizing periodic point.  In addition, assume that $X^\lcs(x) \cap X^\text{u}(p)$ is dense in $X^\text{u}(p)$ and $X^\lcu(y) \cap X^\text{s}(p)$ is dense in $X^\text{s}(p)$ for all $x \in X^\text{u}(p)$ and $y \in X^\text{s}(p)$.  Then $\I_\sync(X,\varphi)$ is Morita equivalent to $S(X,\varphi, p) \otimes U(X, \varphi, p)$.
\end{theorem}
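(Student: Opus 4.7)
The plan is to establish a Muhly--Renault--Williams groupoid equivalence between $G^\lc_\sync(X,\varphi)$ and the product groupoid $G := G^\lcs(X,\varphi,p) \times G^\lcu(X,\varphi,p)$. Once that is in hand, the Morita equivalence of $\I_\sync(X,\varphi) = C^\ast_r(G^\lc_\sync(X,\varphi))$ and $C^\ast_r(G)$ is automatic, and identifying $C^\ast_r(G)$ with the (spatial) tensor product $S(X,\varphi,p) \otimes U(X,\varphi,p)$ yields the theorem.

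The natural linking space is
\[ Z = \{(z,x,y) \in X_\sync \times X^\text{u}(p) \times X^\text{s}(p) : z \sim_\lcs x \text{ and } z \sim_\lcu y\}, \]
topologized analogously to the way Thomsen topologizes the local conjugacy groupoids: a basic open neighborhood of $(z_0, x_0, y_0)$ is obtained from an open neighborhood $W$ of $z_0$ in $X_\sync$, a stable local conjugacy $\gamma^s$ from $z_0$ to $x_0$, an unstable local conjugacy $\gamma^u$ from $z_0$ to $y_0$, and the rectangular (bracket) structure at $z_0$. The left action of $G^\lc_\sync$ is $(z',z)\cdot(z,x,y) = (z',x,y)$, and the right action of $G$ is $(z,x,y)\cdot((x,x'),(y,y')) = (z,x',y')$; both are well defined by transitivity of $\sim_\lcs$ and $\sim_\lcu$ together with the fact that $\sim_\lc$ implies both relations, and the two actions commute since they touch disjoint coordinates.

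To verify the equivalence axioms, freeness of each action is immediate: $(z',z)\cdot(z,x,y) = (z,x,y)$ forces $z=z'$, and similarly on the right. The quotient $\sigma : Z \to X^\text{u}(p) \times X^\text{s}(p)$, $(z,x,y) \mapsto (x,y)$, is surjective by Lemma \ref{lem:lcs-lcu-to-sync-existence} (which is exactly where the density hypothesis in the theorem is used), and its fibers are single $G^\lc_\sync$-orbits: if $z_1, z_2 \in X_\sync$ both satisfy $z_i \sim_\lcs x$ and $z_i \sim_\lcu y$, then by transitivity $z_1 \sim_\lcs z_2$ and $z_1 \sim_\lcu z_2$, and Proposition \ref{prop:sync-lcu-lcs-imply-lc} gives $z_1 \sim_\lc z_2$. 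Dually, $\pi : Z \to X_\sync$, $(z,x,y) \mapsto z$, is surjective by Lemma \ref{lem:sync-to-lcs-lcu-existence}, with fibers single $G$-orbits again by transitivity of the stable and unstable local conjugacy relations.

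The main technical obstacle will be verifying that the two actions are \emph{proper} in the appropriate \'etale sense, i.e.\ that the translation maps from $Z$ fibred over $G$ (respectively $G^\lc_\sync$ fibred over $Z$) to $Z \times_{X^\text{u}(p) \times X^\text{s}(p)} Z$ (respectively $Z \times_{X_\sync} Z$) are homeomorphisms. This will reduce to a careful analysis of the topology on $Z$ via the essential-uniqueness statement for local conjugacies (Lemma \ref{lemma:lc-unique}), which ensures that different choices of the local data $(\gamma^s, \gamma^u)$ agree on neighborhoods and hence define the same local sections of $\sigma$ and $\pi$. Once properness is established, Muhly--Renault--Williams delivers the Morita equivalence of $\I_\sync(X,\varphi)$ and $C^\ast_r(G^\lcs \times G^\lcu)$, and identifying the latter with $S(X,\varphi,p) \otimes U(X,\varphi,p)$ (using that $C^\ast_r$ sends a product of \'etale Hausdorff groupoids to the spatial tensor product) finishes the proof.
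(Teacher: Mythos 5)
Your proposal follows essentially the same route as the paper's proof: the same linking space $Z$ (up to reordering of coordinates and swapping which groupoid acts on the left), the same Muhly--Renault--Williams equivalence between $G^\lc_\sync(X,\varphi)$ and $G^\lcs(X,\varphi,p)\times G^\lcu(X,\varphi,p)$, and the same use of Lemma \ref{lem:lcs-lcu-to-sync-existence}, Lemma \ref{lem:sync-to-lcs-lcu-existence}, and Proposition \ref{prop:sync-lcu-lcs-imply-lc} for the moment-map bijections. The one step you only sketch, properness of the two actions, is carried out in the paper by a direct sequential argument with the topology on $Z$, where the key input for the action of $G^\lc_\sync$ is again Proposition \ref{prop:sync-lcu-lcs-imply-lc} (to see that the limiting pair of synchronizing points is locally conjugate) rather than Lemma \ref{lemma:lc-unique}.
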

\begin{proof}
For convenience we abbreviate the relevant groupoids as $G^\lc$, $G^\lcs$, and $G^\lcu$.  Furthermore we let $G^\lc_\sync$ be as in the proof of Proposition \ref{theorem:sync-ideal}.

Let $Z$ be the topological space \[ Z = \{ (x,y,z) \in X^\text{u}(p) \times X^\text{s}(p) \times X_\text{sync} \mid x \sim_\lcs z \sim_\lcu y \} \] with the topology described as follows.  Suppose $(x,y,z) \in Z$ and consider a product neighborhood $R \cong X^\text{u}(z, \delta_z) \times X^\text{s}(z, \delta_z)$ of $z$.  Furthermore, suppose $\delta > 0$ is small enough that $\gamma^\text{s} : X^\text{u}(x, \delta) \to X^\text{u}(z, \delta_z)$ and $\gamma^\text{u} : X^\text{s}(y, \delta) \to X^\text{s}(z, \delta_z)$ are, respectively, a stable and unstable local conjugacy giving the equivalences $x \sim_\lcs z \sim_\lcu y$.  Hence $[\gamma^\text{s}(x'), \gamma^\text{u}(y')]$ is well-defined for any $x' \in X^\text{u}(x, \delta)$ and $y' \in X^\text{s}(y, \delta)$.  For any $(x,y,z)$, $R$, and $\delta$ as above, we let the set \[ \left\{ \left(x', y', [\gamma^\text{s}(x'), \gamma^\text{u}(y')]\right) \mid x' \in X^\text{u}(x, \delta), y' \in X^\text{s}(y, \delta) \right\} \] be an open neighborhood of $(x,y,z)$ in $Z$.  These sets form a basis for a topology on $Z$.

We will show that $Z$ is a ($G^\lcs \times G^\lcu, G^\lc_\sync)$-equivalence in the sense of Definition 2.1 of \cite{mrw87}.  By Theorem 2.8 of \cite{mrw87}, $C_\text{c}(Z)$ can be completed into a $(S(X,\varphi,p) \otimes U(X,\varphi,p))-\I_\sync(X,\varphi)$ imprimitivity bimodule, and in particular this shows the desired Morita equivalence.  Note that this Morita equivalence still holds for reduced groupoid $C^\ast$-algebras \cite{sims12}.

Following \cite{mrw87}, we must show the following conditions in order for $Z$ to be a ($G^\lcs \times G^\lcu, G^\lc_\sync)$-equivalence:
\begin{enumerate}[(i)]
    \item $Z$ is a left principal $(G^\lcs \times G^\lcu)$-space,
    \item $Z$ is a right principal $G^\lc_\sync$-space,
    \item the $G^\lcs \times G^\lcu$ and $G^\lc_\sync$ actions commute,
    \item the map $\rho$ induces a bijection of $Z/G^\lc_\sync$ onto $(G^\lcs \times G^\lcu)^0 \cong X^\text{u}(p) \times X^\text{s}(p)$, and
    \item the map $\sigma$ induces a bijection of $(G^\lcs \times G^\lcu) \backslash Z$ of $(G^\lc_\sync)^0 \cong X_\sync$.
\end{enumerate}
We refer the reader to Section 2 of \cite{mrw87} for the relevant definitions.  In the literature the maps $\rho$ and $\sigma$ are called the \emph{moment maps}.  Note that condition (iii), that the actions of $G^\lcs \times G^\lcu$ and $G^\lc_\sync$ commute, will be clear from the definition of $Z$.

First we will show that $Z$ is a left principal $(G^\lcs \times G^\lcu)$-space.  Define the set \[ (G^\lcs \times G^\lcu) \ast Z = \left\{ \big( (x_2, x_1), (y_2, y_1), (x, y, z) \big) \mid x = x_1, y = y_1 \right\} \subseteq G^\lcs \times G^\lcu \times Z \,. \]
Then the action $\Gamma_l : (G^\lcs \times G^\lcu) \ast Z \to Z \times Z$ is defined as \[ \Gamma_l\big( (x_2, x_1), (y_2, y_1), (x_1, y_1, z) \big) = \big( (x_2, y_2, z), (x_1, y_1, z) \big) \,. \]  That this action is free is clear from the fact that $G^\lcs \times G^\lcu$ is principal.  To show it is proper we show that whenever $\{t_n\}_{n \geq 0}$ is a sequence in $(G^\lcs \times G^\lcu) \ast Z$ such that $\{\Gamma_l(t_n)\}_{n \geq 0}$ converges, then $\{t_n\}_{n \geq 0}$ has a convergent subsequence.  In fact we will show that $\{t_n\}_{n \geq 0}$ also converges whenever $\{\Gamma_l(t_n)\}_{n \geq 0}$ converges.  Assume that $\big( (x^n_2, x^n_1), (y^n_2, y^n_1), (x^n_1, y^n_1, z^n) \big)$ is a sequence in $(G^\lcs \times G^\lcu) \ast Z$ such that $\big( (x^n_2, y^n_2, z^n), (x^n_1, y^n_1, z^n) \big)$ converges to an element $\big( (x_2, y_2, z), (x_1, y_1, z) \big) \in Z \times Z$.  Observe that $x_1 \sim_\lcs x_2$ and $y_1 \sim_\lcu y_2$.  By our definition of the topology on $Z$, we have the convergence of $(x^n_2, x^n_1)$ to $(x_2,x_1)$ in $G^\lcs$ and $(y^n_2, y^n_1)$ to $(y_2, y_1)$ in $G^\lcu$.  Hence $\big( (x_2, x_1), (y_2, y_1), (x_1, y_1, z) \big)$ is in $(G^\lcs \times G^\lcu) \ast Z$ and it is the limit of the sequence $\big( (x^n_2, x^n_1), (y^n_2, y^n_1), (x^n_1, y^n_1, z^n) \big)$.

Next we must show that $Z$ is a right principal $G^\lc_\sync$-space.  Our method is similar to the above proof of (i); we define \[ Z \ast G^\lc_\sync = \left\{ \big( (x, y, z), (z_1, z_2) \big) \mid z = z_1 \right\} \subseteq Z \times G^\lc_\sync \] and an action $\Gamma_r: Z \ast G^\lc_\sync \to Z \times Z$ defined as \[ \Gamma_r\big( (x, y, z_1), (z_1, z_2) \big) = \big( (x,y,z_1), (x,y,z_2) \big) \] which is also free by the fact that $G^\lc_\sync$ is principal.  Assume that $\big( (x^n, y^n, z^n_1), (z^n_1, z^n_2) \big)$ is a sequence in $Z \ast G^\lc_\sync$ such that 
\[ \big( (x^n, y^n, z^n_1), (x^n, y^n, z^n_2) \big) \hbox{ converges to }\big( (x,y,z_1), (x,y,z_2) \big) \in Z \times Z. \]  Observe that $z_1 \sim_\lcs x \sim_\lcs z_2$ and $z_1 \sim_\lcu y \sim_\lcu z_2$, so $z_1 \sim_\lcs z_2$ and $z_1 \sim_\lcu z_2$.  Since $z_1$ and $z_2$ are also both synchronizing points in $X$ we can conclude from Proposition \ref{prop:sync-lcu-lcs-imply-lc} that $z_1 \sim_\lc z_2$.  Thus $\big( (x^n, y^n, z^n_1), (z^n_1, z^n_2) \big)$ converges to $\big( (x, y, z_1), (z_1, z_2) \big)$ in $Z \ast G^\lc_\sync$.  This shows $\Gamma_r$ is proper and that $Z$ is a right principal $G^\lc_\sync$-space.

To show (iv), we must show that $Z/G^\lc_\sync$, which are the orbits of $Z$ under the action of $G^\lc_\sync$, is in bijection with elements of $X^\text{s}(p) \times X^\text{u}(p)$ via the map induced by $\rho$ defined by $[(x,y,z)] \mapsto (x, y)$.  Injectivity of this map is clear from action of $G^\lc_\sync$.  To show it is also surjective we need to, given $(x,y) \in X^\text{u}(p) \times X^\text{s}(p)$, find $z \in X_\sync$ such that $x \sim_\lcs z \sim_\lcu y$.  This follows from Lemma \ref{lem:lcs-lcu-to-sync-existence}.

Lastly, to show (v), we note the injectivity of the map induced by $\sigma$, defined by $[(x,y,z)] \mapsto z$, is clear.  To show surjectivity we must similarly show that for every $z \in X_\sync$ there is $x \in X^\text{u}(p)$ and $y \in X^\text{s}(p)$ such that $x \sim_\lcs z \sim_\lcu y$.  This follows from Lemma \ref{lem:sync-to-lcs-lcu-existence}.
\end{proof}

\section{The Homoclinic Algebra of a Finitely Presented System} \label{SecFPS}

In this section we summarize the main results in the case of finitely presented systems.  First we must show that mixing finitely systems satisfy the hypothesis of Theorem \ref{th:sync-morita-equivalence}. Then, we discuss amenability of the stable and unstable synchronizing heteroclinic groupoids.

\subsection{Density of Stable/Unstable Local Conjugacy Equivalence Classes} \label{denseFinitelyPresentedSubSec}

We will show that mixing finitely presented systems satisfy the assumptions of Lemma \ref{lem:lcs-lcu-to-sync-existence}.  This result builds on work by Fisher on resolving extensions of finitely presented systems \cite{fisher13}.  First we must define some new terminology.

\begin{definition}
Let $(X,\varphi)$ and $(Y,\psi)$ be expansive dynamical systems and $\pi : Y \to X$ a factor map.  Then $\pi$ is called \begin{enumerate}[(i)]
    \item \emph{finite-to-one} if there is a constant $M > 0$ such that $|\pi^{-1}(x)| \leq M$ for all $x \in X$,
    \item \emph{u-resolving} (respectively \emph{s-resolving}) is $\pi$ restricted to $X^\text{u}(x)$ (respectively $X^\text{s}(x)$) is injective for all $x \in X$, and
    \item \emph{one-to-one almost everywhere} if there is a residual set of points in $X$ with unique pre-image under $\pi$.
\end{enumerate}
\end{definition}

We have the following result from \cite{fisher13}.   

\begin{theorem}{\cite[Theorem 1.1, Lemma 3.2]{fisher13}}\label{theorem:fp-factor-of-smale}
Suppose $(X,\varphi)$ is an irreducible finitely presented system.  Then there is an irreducible Smale space $(Y,\psi)$ and a u-resolving factor map $\pi:Y \to X$ such that $\pi$ is one-to-one almost everywhere.  Moreover, there exists a dense open set $W \subseteq X$ such that each periodic point in $W$ has a unique pre-image under $\pi$.
\end{theorem}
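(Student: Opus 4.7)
The plan is to bootstrap from the defining property of finitely presented systems (being a factor of an SFT) and then upgrade this factor map to have the desired resolving and injectivity properties. Since every SFT is a Smale space, the main work is not in producing a Smale space extension but in arranging that the factor map be u-resolving and one-to-one on a residual set.

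First, by the definition of finitely presented, choose an SFT $(\Sigma_0,\sigma_0)$ and a factor map $\rho_0:\Sigma_0\to X$. In general $\rho_0$ is neither u-resolving nor injective almost everywhere, so I would construct a refined symbolic cover $(Y,\psi)$ together with a factor map $\pi:Y\to X$ by coding additional ``local unstable history'' into the symbols. Concretely, consider pairs $(\xi,\mathcal{U})$ where $\xi\in\Sigma_0$ and $\mathcal{U}$ records which element of a fixed Markov-like rectangular cover of $X$ the point $\rho_0(\sigma_0^{-n}\xi)$ belongs to, for each $n\ge 0$. Identifying $(\xi,\mathcal{U})\sim(\xi',\mathcal{U}')$ whenever $\rho_0(\xi)=\rho_0(\xi')$, $\mathcal{U}=\mathcal{U}'$, and $\xi\sim_{\mathrm u}\xi'$ in $\Sigma_0$, gives a quotient space $Y$ whose dynamics is again conjugate to an SFT (hence a Smale space). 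The resulting factor $\pi:Y\to X$ is u-resolving by construction: two points of $Y$ that are unstably equivalent and map to the same point in $X$ must have the same sequence of rectangle labels and the same underlying unstable class in $\Sigma_0$, so they are identified.

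Next, to see that $\pi$ is one-to-one almost everywhere, note that $\pi$ is a continuous finite-to-one map between compact metric spaces, so the set $E_k=\{x\in X\mid |\pi^{-1}(x)|\ge k\}$ is closed for each $k$. The key step is to show that the set where $|\pi^{-1}(x)|=1$ is dense; this follows from u-resolvingness combined with the local product structure at synchronizing points of $X$ (which are dense by Proposition~\ref{theorem:sync-points-dense} and the corollary to Proposition~\ref{theorem:countably-rectangled-implies-sync}): whenever $x$ lies in a rectangular neighborhood that is "fully covered" by a single symbolic rectangle, the fiber is a singleton. A Baire-category argument then yields a residual set on which $\pi$ is one-to-one.

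Finally, the dense open set $W$ is obtained as follows. The set $W_0=\{x\in X\mid |\pi^{-1}(x)|=1\}$ is a $G_\delta$, but the locus where this fiber is additionally locally constant (hence $\pi$ is a local homeomorphism) is open. Using irreducibility of $X$, density of synchronizing points, and density of periodic points in the Smale space $Y$ (together with the fact that images of periodic points under a factor map are periodic), one shows that this open locus contains a dense set of periodic points; take $W$ to be its interior. The irreducibility of $(Y,\psi)$ can then be arranged by passing to an irreducible component of $Y$ whose image under $\pi$ is all of $X$, which exists because $X$ is irreducible and $\pi$ has only finitely many irreducible components in its domain.

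The hardest step is the construction of the u-resolving cover while simultaneously preserving the Smale space (and in particular SFT) structure of the extension: one must check that the equivalence relation above is closed and shift-invariant, and that the quotient inherits a well-defined bracket. Verifying this requires delicate bookkeeping with the Markov cover of $X$ and careful use of the local product structure at synchronizing points, which is where the bulk of Fisher's argument is concentrated.
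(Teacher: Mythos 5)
This statement is not proved in the paper at all: it is quoted verbatim from Fisher \cite{fisher13}, so your proposal is really an attempt to reprove Fisher's theorem, and as such it has a genuine gap at its central step. You claim that the refined cover $Y$, obtained by adjoining rectangle-label sequences to the SFT cover $\Sigma_0$ and then quotienting, ``is again conjugate to an SFT (hence a Smale space).'' This is false in general, and the failure is structural rather than a matter of bookkeeping: if $Y$ were an SFT and $\pi\colon Y\to X$ were u-resolving, then $\pi$ would be finite-to-one and each $\pi^{-1}(X^{\text{u}}(x))$ a finite union of unstable classes of $Y$ (as in Lemma \ref{lemma:FiniteToOne}), so every local unstable set of $X$ would be a countable union of compact sets homeomorphic to subsets of a zero-dimensional space, hence totally disconnected. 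But finitely presented systems such as hyperbolic toral automorphisms, or the expansive surface homeomorphisms of genus at least two in Theorem \ref{thm:surfaces}, have connected, nondegenerate local unstable sets. So the extension must genuinely be a non-zero-dimensional Smale space, and the real content of the theorem --- building a space with a bracket and hyperbolic behavior out of the rectangle data, showing the identification is a closed, $\varphi$-invariant equivalence relation, and verifying u-resolving --- is exactly what your sketch asserts rather than proves. Relatedly, your justification of u-resolving (``two points of $Y$ that are unstably equivalent and map to the same point in $X$ must have the same sequence of rectangle labels'') does not follow: the rectangles of a finite cover overlap, so unstably asymptotic orbits can carry different label sequences, and handling this compatibility is precisely where Fisher's construction does its work.

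The later steps are also thinner than the statement requires. ``One-to-one almost everywhere'' is not delivered by a generic Baire argument plus the phrase ``fully covered by a single symbolic rectangle''; and the final clause you must prove is stronger than genericity: a dense \emph{open} set $W$ such that \emph{every} periodic point in $W$ has a unique preimage, whereas your sketch only produces an open locus containing a dense set of periodic points, which is not the same assertion. Finally, the reduction to an irreducible $(Y,\psi)$ by ``passing to an irreducible component'' is not automatic: $Y$ need not equal its non-wandering set, orbit closures in Smale spaces need not be Smale spaces (local product structure can fail), and one must check that the chosen component still surjects onto $X$ and that the restriction of $\pi$ remains u-resolving and almost one-to-one. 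These are the points Fisher's paper is devoted to; absent reconstructing that machinery, the honest route is the one the paper takes, namely citing \cite{fisher13} directly.
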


Likewise, there is a parallel to Theorem \ref{theorem:fp-factor-of-smale} for the existence of an s-resolving factor map with identical properties.  One can also show that the Smale space $(Y,\psi)$ in Theorem \ref{theorem:fp-factor-of-smale} can additionally be shown to be mixing whenever $(X,\varphi)$ is mixing, see Lemma \ref{lemma:fp-factor-mixing}.

We now show that if $\pi : Y \to X$ is a u-resolving factor map as in Theorem \ref{theorem:fp-factor-of-smale}, it must be finite-to-one.  The technique of the proof is essentially the same as \cite[Theorem 2.5.3]{MR3243636}.  Note that $\pi$ being a finite-to-one factor map is a necessary assumption in \cite[Lemma 3.2]{fisher13}.

\begin{lemma} \label{lemma:FiniteToOne}
Let $(X,\varphi)$ be a finitely presented system, $(Y,\psi)$ a Smale space, and $\pi : Y \to X$ is a factor map which is u-resolving or s-resolving.  Then there exists $M \in \N$ such that the following hold:
\begin{enumerate}
\item For any $x\in X$, there exists $y_1, \ldots , y_K$ in $Y$ with $K\le M$ such that 
\[ \pi^{-1}(X^u(x))= \cup_{k=1}^{K} Y^u(y_k). \]
\item For any $x\in X$, there exists $y_1, \ldots , y_L$ in $Y$ with $L\le M$ such that 
\[ \pi^{-1}(X^s(x))= \cup_{l=1}^{L} Y^s(y_l). \]
\item For any $x\in X$, $|\pi^{-1}(x)| \le M$, so that in particular, $\pi$ is finite-to-one.
\end{enumerate}
\end{lemma}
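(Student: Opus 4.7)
The plan is to establish (3) first, the uniform bound $M$ on fiber cardinality, and then derive (1) and (2) as consequences. I focus on the case that $\pi$ is u-resolving, the s-resolving case being entirely symmetric. The strategy for (3) is to exhibit a uniform $\delta > 0$ such that distinct points of any fiber $\pi^{-1}(x)$ are at least $\delta$ apart in $Y$, which by compactness of $Y$ forces $|\pi^{-1}(x)|$ to be at most the covering number of $Y$ at scale $\delta$, giving the required bound $M$.

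Let $\varepsilon_Y$ and $0 < \lambda < 1$ be the Smale space constants of $(Y,\psi)$, and let $\varepsilon_X$ be the expansiveness constant of $(X,\varphi)$. Using uniform continuity of $\pi$ together with continuity of the bracket, I choose $\delta \in (0, \varepsilon_Y/2)$ so small that $d_Y(y,y') < \delta$ implies $d_X(\pi(y), \pi(y')) < \varepsilon_X$ and also $d_Y(y, [y,y']), d_Y(y', [y,y']) < \delta$ whenever $d_Y(y,y') < \delta$. Given distinct $y_1, y_2 \in \pi^{-1}(x)$ with $d_Y(y_1, y_2) < \delta$, form the Smale space bracket $z = [y_1, y_2] \in Y^s(y_1) \cap Y^u(y_2)$. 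Axioms (C1) and (C2) yield $d_Y(\psi^n z, \psi^n y_1) \le \lambda^n \delta$ for $n \ge 0$ and $d_Y(\psi^n z, \psi^n y_2) \le \lambda^{|n|}\delta$ for $n \le 0$. Since $\pi(\psi^n y_i) = \varphi^n(x)$ for both $i=1,2$, the choice of $\delta$ gives $d_X(\varphi^n \pi(z), \varphi^n(x)) < \varepsilon_X$ for every $n \in \mathbb{Z}$, and expansiveness of $X$ forces $\pi(z) = x$. Since $z$ and $y_2$ both lie in $Y^u(y_2)$ with $\pi(z) = \pi(y_2) = x$, u-resolving yields $z = y_2$, placing $y_2 \in Y^s(y_1, \delta)$; the symmetric argument with $[y_2, y_1]$ gives $y_1 \in Y^s(y_2, \delta)$.

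The principal obstacle is closing from this local stable coincidence to $y_1 = y_2$. Forward iteration already gives $d_Y(\psi^n y_1, \psi^n y_2) \le \lambda^n \delta \to 0$, and reapplying the bracket argument at forward iterates only reproduces the local stable conclusion. The closing step will proceed by refining a Markov partition of $Y$ (which exists since $Y$ is a Smale space) to a scale below $\delta$, so that $y_1$ and $y_2$ must lie in a common rectangle $R$; u-resolving combined with the product structure $R \cong Y^u \times Y^s$ and the fact that $\pi$ intertwines rectangles with their images in $X$ will force $y_1$ and $y_2$ to share the same unstable coordinate in $R$, hence to coincide. This is the adaptation to the Smale space setting of the classical symbolic dynamics argument of Theorem 2.5.3 in \cite{MR3243636}, where a right-resolving sliding block code is shown to be bounded-to-one by the number of states in the presenting graph.

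Given (3), parts (1) and (2) follow as corollaries. Enumerate $\pi^{-1}(x) = \{y_1, \ldots, y_K\}$ with $K \le M$. Since $\pi$ preserves unstable equivalence, $\bigcup_{k=1}^K Y^u(y_k) \subseteq \pi^{-1}(X^u(x))$. For the reverse inclusion, fix $y \in \pi^{-1}(X^u(x))$; u-resolving makes $\pi|_{Y^u(y)}$ injective, and a path-lifting argument along the unstable leaf of $X$ through $\pi(y)$—using the local product structure at the dense set of synchronizing points of $X$ to locally invert $\pi$ in the unstable direction and then extending along the global unstable set via the inductive limit topology of Section~\ref{sec:global-stable-unstable-sets}—produces a point in $Y^u(y)$ whose image under $\pi$ is $x$; this point must coincide with some $y_k$, placing $y \in Y^u(y_k)$. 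Part (2) is symmetric in the s-resolving case; assuming only u-resolving, one invokes the companion s-resolving factor map supplied by Theorem~\ref{theorem:fp-factor-of-smale} and transfers the decomposition.
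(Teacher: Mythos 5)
Your overall plan (prove a uniform bound on fibers first, then deduce the leaf decompositions) reverses the paper's order, but the execution breaks down at the key step of (3). You propose to find a uniform $\delta>0$ such that distinct points of any fiber are $\delta$-separated; this is strictly stronger than finite-to-one and is in general \emph{false} for a map that is only u-resolving. Your own bracket computation shows exactly what happens: if $y_1\neq y_2$, $\pi(y_1)=\pi(y_2)$ and $d(y_1,y_2)<\delta$, then u-resolving forces $[y_1,y_2]=y_2$, \ie $y_2\in Y^{\text{s}}(y_1,\delta)$ --- a local \emph{stable} coincidence, which a u-resolving map does not forbid. Indeed a u-resolving code need not be ``s-closing'': there are right-resolving one-block codes admitting two distinct points with the same image that agree in all sufficiently positive coordinates, and shifting such a pair forward produces distinct fiber points at arbitrarily small distance. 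The deferred ``closing step'' cannot repair this: in a rectangle $R\cong Y^{\text{u}}\times Y^{\text{s}}$, saying that $y_1$ and $y_2$ ``share the same unstable coordinate'' is precisely the statement $y_2\in Y^{\text{s}}(y_1,\delta)$ that you already have, and it does not imply $y_1=y_2$; neither u-resolving nor a fine Markov partition forces equality of the stable coordinates. So the proposed proof of (3) has no valid closing argument, and the statement it aims at is not true at this level of generality.

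The deductions of (1) and (2) also have gaps even if (3) were granted. For (1) you assert that every unstable class meeting $\pi^{-1}(X^u(x))$ contains a preimage of $x$, which amounts to $\pi$ mapping unstable sets \emph{onto} unstable sets; the sketched ``path-lifting along the unstable leaf via synchronizing points'' is not an argument (and note the lemma does not require the base points $y_k$ to lie in $\pi^{-1}(x)$). For (2), invoking the companion s-resolving factor map of Theorem \ref{theorem:fp-factor-of-smale} is not legitimate: that is a different map on a different Smale space and gives no information about $\pi^{-1}(X^s(x))$ for the given $\pi$. The paper instead proves (1) first by a pigeonhole (``diamond'') argument in the spirit of \cite[Theorem 2.5.3]{MR3243636}: using \cite[Lemma 3.3]{fisher13} fix $\delta_\pi>0$ with $\pi([y_1,y_2])=[\pi(y_1),\pi(y_2)]$ whenever $d(y_1,y_2)<\delta_\pi$, cover $Y$ by $M$ balls of radius $\delta_\pi/2$, and show that among any $M+1$ points of $Y$ whose images are pairwise unstably equivalent two must be unstably equivalent in $Y$ (iterate backward until the images lie in each other's local unstable sets, bracket the two preimages falling in a common ball, then use expansiveness of $X$ and u-resolving). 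This gives (1) with bound $M$, the analogous argument gives (2), and (3) is then immediate: $M+1$ distinct points in one fiber would produce two unstably equivalent points with the same image, contradicting u-resolving. You may want to rebuild your write-up around that scheme rather than the uniform-separation claim.
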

\begin{proof}
We will prove the first and last statements when $\pi$ is u-resolving. The other cases are similar; the interested reader can see the proof of \cite[Theorem 2.5.3]{MR3243636} for more details.  

Suppose $y \sim_\text{u} y'$ for $y, y' \in Y$ and denote $x = \pi(y)$ and $x' = \pi(y')$.  Then since $\pi$ is a factor map \[ \lim_{n \to \infty} d(\varphi^{-n}(x), \varphi^{-n}(x')) = \lim_{n \to \infty} d(\pi(\psi^{-n}(y)), \pi(\psi^{-n}(y'))) = 0 \] by the uniform continuity of $\pi$.  Hence $Y^\text{u}(y) \subseteq \pi^{-1}(X^\text{u}(x))$ for any $x \in X$ and $y \in \pi^{-1}(x)$.

We will now show item (1) above, by finding the required $M$.  By \cite[Lemma 3.3]{fisher13}, there is a constant $\delta_\pi > 0$ such that if $y_1, y_2 \in Y$ and $d(y_1, y_2) < \delta_\pi$ then $\pi([y_1,y_2]) = [\pi(y_1), \pi(y_2)]$.  Cover $Y$ with balls of radius $\displaystyle \frac{\delta_\pi}{2}$, then take a finite subcover $\displaystyle \left\{ B_m \right\}_{m = 1}^M$.  Suppose $y_1, y_2, \dots, y_{M+1} \in Y$ are such that $\pi(y_i) \sim_\text{u} \pi(y_j)$ for all $1 \leq i,j \leq M + 1$.  Fix $\displaystyle 0 < \varepsilon \leq \frac{\varepsilon_X}{2}$, then choose $n \geq 0$ such that $\varphi^{-n}(\pi(y_i)) \in X^\text{u}(\varphi^{-n}(\pi(y_j)), \varepsilon)$ for all $1 \leq i,j \leq M + 1$.  By the pigeonhole principle, there exists $i \neq j$ such that $\psi^{-n}(y_i), \psi^{-n}(y_j) \in B_m$ for some $1 \leq m \leq M$.  Hence $d(\psi^{-n}(y_i), \psi^{-n}(y_j)) < \delta_\pi$, so
\begin{align*}
\pi([\psi^{-n}(y_i), \psi^{-n}(y_j)]) &= [\pi(\psi^{-n}(y_i)), \pi(\psi^{-n}(y_j))] \\
&= [\varphi^{-n}(\pi(y_i)), \varphi^{-n}(\pi(y_j))] \,.
\end{align*}
Then by our assumption that $\varphi^{-n}(\pi(y_i)) \in X^\text{u}(\varphi^{-n}(\pi(y_j)), \varepsilon)$, it follows from expansiveness that \[ [\varphi^{-n}(\pi(y_i)), \varphi^{-n}(\pi(y_j))] = \varphi^{-n}(\pi(y_j)) = \pi(\psi^{-n}(y_j)) \,. \]
Since $\pi$ is u-resolving, we have $[\psi^{-n}(y_i), \psi^{-n}(y_j)] = \psi^{-n}(y_j)$.  It follows that $\psi^{-n}(y_i) \sim_\text{u} \psi^{-n}(y_j)$ and consequently $y_i \sim_\text{u} y_j$.

Suppose $x \in X$ and $\pi^{-1}(x) = \{ y_1, y_2, \dots, y_{M+1} \}$.  Then we have shown there exists $i \neq j$ such that $y_i \sim_\text{u} y_j$.  However since $\pi$ is u-resolving and $\pi(y_i) = \pi(y_j)$, we have $y_i = y_j$.  Hence $|\pi^{-1}(x)| \leq M$ for all $x \in X$.
\end{proof}

\begin{lemma}\label{lemma:fp-factor-sync-per-unique-pre-image}
Suppose $(X,\varphi)$ is an irreducible finitely presented system and $\pi:Y \to X$ is a factor map as in Theorem \ref{theorem:fp-factor-of-smale}.  Then there exists a synchronizing periodic point $p$ such that $p$ has a unique pre-image under $\pi$.
\end{lemma}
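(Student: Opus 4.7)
The plan is to assemble the lemma from three facts already established in the excerpt: the density of synchronizing points, the density of periodic points, and the existence of the open dense set $W$ of points with unique preimage under $\pi$.

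First I would note that $(X,\varphi)$ is a synchronizing system: since $(X,\varphi)$ is irreducible and finitely presented, the corollary to Proposition \ref{theorem:countably-rectangled-implies-sync} applies. Hence by Proposition \ref{theorem:sync-points-dense}, $X_\sync$ is open and dense in $X$, and by Corollary \ref{cor:sync-periodic-dense}, $\Per(X,\varphi)$ is dense in $X$.

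Next I would observe that the set $X_\sync \cap \Per(X,\varphi)$ of synchronizing periodic points is dense in $X$. Indeed, if $U \subseteq X$ is any nonempty open set, then $U \cap X_\sync$ is nonempty (because $X_\sync$ is dense) and open (because $X_\sync$ is open), so by density of $\Per(X,\varphi)$ it contains a periodic point. Any such point is automatically synchronizing.

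Finally, by Theorem \ref{theorem:fp-factor-of-smale}, the set $W \subseteq X$ is open and dense, so $W \cap X_\sync$ is again open and dense in $X$. Applying the previous paragraph to this open set produces a synchronizing periodic point $p \in W \cap X_\sync$. Since $p \in W$, Theorem \ref{theorem:fp-factor-of-smale} guarantees that $p$ has a unique preimage under $\pi$, completing the proof. There is really no substantial obstacle here beyond verifying that the relevant density statements can be intersected; the lemma is a bookkeeping consequence of results already in place.
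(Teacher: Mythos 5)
Your proposal is correct and follows essentially the same route as the paper: combine the openness and density of $X_\sync$ with the density of $\Per(X,\varphi)$ to get a dense set of synchronizing periodic points, then intersect with the open dense set $W$ from Theorem \ref{theorem:fp-factor-of-smale} to find a synchronizing periodic point with unique preimage. Your explicit citation of the corollary to Proposition \ref{theorem:countably-rectangled-implies-sync} (to ensure $(X,\varphi)$ is synchronizing) is a detail the paper leaves implicit, but otherwise the arguments coincide.
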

\begin{proof}
Note that synchronizing periodic points are dense in $X$.  This is because $\Per(X,\varphi)$ is dense in $X$ by Corollary \ref{cor:sync-periodic-dense}, and $X_\sync$ is an open dense set by Proposition \ref{theorem:sync-points-dense}.  Then it follows from Theorem \ref{theorem:fp-factor-of-smale} that there is a synchronizing periodic point $p \in W$, in other words $p$ is a synchronizing periodic point with unique pre-image under $\pi$.
\end{proof}

\begin{lemma}\label{lemma:fp-factor-pi-restriction-homeo}
Suppose $(X,\varphi)$ and $(Y,\psi)$ are expansive dynamical systems and $\pi : Y \to X$ is a factor map.  If $p \in X$ is a periodic point with a unique pre-image $q \in Y$, then
\begin{enumerate}[(i)]
    \item $\pi^{-1}(X^\text{s}(p)) = Y^\text{s}(q)$,
    \item $\pi^{-1}(X^\text{u}(p)) = Y^\text{u}(q)$,
    \item if $\pi$ is s-resolving, then $\pi$ restricted to $Y^\text{s}(q)$ is a homeomorphism from $Y^\text{s}(q)$ onto $X^\text{s}(p)$, and
    \item if $\pi$ is u-resolving, then $\pi$ restricted to $Y^\text{u}(q)$ is a homeomorphism from $Y^\text{u}(q)$ onto $X^\text{u}(p)$.
\end{enumerate}
\end{lemma}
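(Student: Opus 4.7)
The plan is to prove (i) directly, deduce (ii) by applying the same argument to the inverse dynamical systems, and then establish (iii) as a bijection (from (i) combined with s-resolving) upgraded to a homeomorphism of locally compact Hausdorff spaces by a properness argument; (iv) will be analogous.

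First, I would note that $q$ is automatically periodic with $\psi^r(q) = q$, where $r$ is the period of $p$: $\pi(\psi^r(q)) = \varphi^r(p) = p$ forces $\psi^r(q) = q$ by uniqueness of the preimage. The inclusion $Y^{\text{s}}(q) \subseteq \pi^{-1}(X^{\text{s}}(p))$ is immediate from uniform continuity of $\pi$. For the reverse inclusion in (i), given $y \in Y$ with $\pi(y) \sim_{\text{s}} p$, observe that $\pi(\psi^{rm}(y)) = \varphi^{rm}(\pi(y)) \to p$ (using $\varphi^{rm}(p) = p$); by compactness of $Y$, every subsequential limit of $\{\psi^{rm}(y)\}$ lies in $\pi^{-1}(p) = \{q\}$, so $\psi^{rm}(y) \to q$. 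Applying the continuous map $\psi^j$ for $0 \leq j < r$ then gives $\psi^{rm+j}(y) \to \psi^j(q) = \psi^{rm+j}(q)$, so $y \sim_{\text{s}} q$. Part (ii) then follows by applying the same argument to the factor map $\pi : (Y, \psi^{-1}) \to (X, \varphi^{-1})$.

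For (iii), bijectivity of $\pi|_{Y^{\text{s}}(q)} : Y^{\text{s}}(q) \to X^{\text{s}}(p)$ is immediate from (i) and s-resolving. Continuity with respect to the inductive limit topologies follows from uniform continuity of $\pi$: given any $\varepsilon' > 0$ one can choose $\varepsilon > 0$ so that $\pi(Y^{\text{s}}(\psi^N(q), \varepsilon)) \subseteq X^{\text{s}}(\varphi^N(p), \varepsilon')$, and the universal property of the inductive limit topology on $Y^{\text{s}}(q)$ then delivers global continuity. Since both spaces are locally compact Hausdorff, and a continuous proper bijection between such spaces is automatically a homeomorphism (proper continuous maps into locally compact Hausdorff spaces are closed), it remains to show that $\pi|_{Y^{\text{s}}(q)}$ is proper. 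This is the main obstacle.

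To verify properness, fix a typical compact piece $K = \varphi^{-M}(X^{\text{s}}(\varphi^M(p), \varepsilon))$ of $X^{\text{s}}(p)$ with $\varepsilon \leq \eta_X$. The contraction property of the adapted metric on $X$ gives $d(\varphi^m(x), p) \leq \lambda^{m-M} \varepsilon$ uniformly for $x \in K$ and $m \geq M$, so $\pi(\psi^{rm}(y)) \to p$ uniformly on $\pi^{-1}(K)$. A compactness-and-contradiction argument then upgrades this to $\psi^{rm}(y) \to q$ uniformly on $\pi^{-1}(K)$: any subsequential limit $y'$ of $\{\psi^{rm_k}(y_k)\}$ with $y_k \in \pi^{-1}(K)$ would satisfy $\pi(y') = p$, forcing $y' = q$. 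Fixing $N_1$ with $d(\psi^{rm}(y), q) < \eta$ uniformly for $m \geq N_1$ and $y \in \pi^{-1}(K)$, and applying the Lipschitz constant $L$ of $\psi$ at most $r-1$ times to interpolate across the finite orbit of $q$, one concludes $\psi^{rN_1}(y) \in Y^{\text{s}}(q, L^{r-1}\eta)$ for every $y \in \pi^{-1}(K)$. Choosing $\eta$ so that $L^{r-1}\eta \leq \eta_Y$ places $\pi^{-1}(K)$ inside a single compact piece of $Y^{\text{s}}(q)$; being closed in $Y$, it is therefore compact in $Y^{\text{s}}(q)$. Part (iv) is established identically with $u$ in place of $s$.
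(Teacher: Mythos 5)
Your proof is correct, and parts (i)--(ii) together with bijectivity and continuity in (iii)--(iv) follow essentially the same route as the paper: pass to the subsequence of a returning orbit, use uniqueness of the preimage of $p$ to force convergence of the whole sequence $\psi^{rm}(y)\to q$, and interpolate over one period. Where you genuinely diverge is the properness step. The paper argues sequentially: if $\pi(y_n)\to x$ in $X^{\text{u}}(p)$, compactness of $Y$ gives a subsequential limit $y'$ with $\pi(y')=x$, and since every point of $X^{\text{u}}(p)$ has a unique preimage (by (ii) plus resolving), $y_n\to y$ in $Y$. Your argument instead shows directly that the preimage of each compact piece $\varphi^{-M}\left(X^{\text{s}}(\varphi^{M}(p),\varepsilon)\right)$ lands inside a single compact piece $\psi^{-rN_1}\left(Y^{\text{s}}(q,\eta_Y)\right)$, via uniform convergence $\psi^{rm}\to q$ on $\pi^{-1}(K)$ and the Lipschitz interpolation across one period. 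This is somewhat longer but has the advantage of being explicit about the inductive limit topologies: it produces compactness of $\pi^{-1}(K)$ in $Y^{\text{s}}(q)$ itself, whereas the paper's sequential argument only yields convergence in the ambient space $Y$ and implicitly relies on the compatibility of the piece topologies. Two small points you should tidy up: the estimate should read $d(\varphi^m(x),\varphi^m(p))\leq \lambda^{m-M}\varepsilon$ (equal to $d(\varphi^m(x),p)$ only when $r\mid m$), which is all you use; and you should record the standard fact that every compact subset of $X^{\text{s}}(p)$ in the inductive limit topology is contained in one of the pieces $K$ (or note that checking properness on a cofinal family of compacta suffices), so that compactness of $\pi^{-1}(K)$ for these $K$ really gives properness.
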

\begin{proof}
We will first show (ii), then the proof of (i) is similar.  Let $r$ denote the period of $p$.  Suppose $y \in Y$ is contained in $\pi^{-1}(x)$.  By compactness, the sequence $\{ \psi^{-rn}(y) \}_{n \geq 0}$ has a subsequence $\{ \psi^{-rn_k}(y) \}_{k \geq 0}$ converging to a point $q' \in Y$.  Then for $x \in X^\text{u}(p)$ we have
\begin{align*}
    \pi(q') &= \lim_{k \to \infty} \pi\parens{\psi^{-rn_k}(y)} \\
    &= \lim_{k \to \infty} \varphi^{-rn_k}(\pi(y)) \\
    &= \lim_{k \to \infty} \varphi^{-rn_k}(x) \\
    &= p \,.
\end{align*}
Since the pre-image of $p$ is unique we must have $q' = q$.  Furthermore, since $\{ \psi^{-rn}(y) \}_{n \geq 0}$ has a convergent subsequence and since every subsequence must converge to $q$, we have $\displaystyle \lim_{n \to \infty} \psi^{-rn}(y) = q$.  Fix $\varepsilon > 0$ and let $\delta > 0$ be such that $d(q, w) < \delta$ implies $d(\psi^{-i}(q), \psi^{-i}(w)) < \varepsilon$ for $0 \leq i < r$.  Then there is an $N$ such that $d(q, \psi^{-rn}(y)) < \delta$ for all $n \geq N$.  It follows that $d(\psi^{-j}(q), \psi^{-j}(y)) < \varepsilon$ for all $j \geq rN$.  Hence $y \in Y^\text{u}(q)$, and since $\pi(Y^\text{u}(q)) \subseteq X^\text{u}(p)$, this proves (ii).

Next we will show (iv), and proof of (iii) is similar.  If $\pi$ is u-resolving, then in particular we have shown $\pi|_{Y^\text{u}(q)}$ is a bijection.  Next we show that $\pi|_{Y^\text{u}(q)}$ is continuous.  Let $\{y_n\}_{n \geq 0}$ be a sequence converging to $y \in Y^\text{u}(q)$.  Fix $\varepsilon > 0$ and let $\delta > 0$ be such that $d(w,w') < \delta$ implies $d(\pi(w), \pi(w')) < \varepsilon$ for all $w,w' \in Y$.  Then, there exists $N$ such that $y_n \in Y^\text{u}(y, \delta)$ for all $n \geq N$.  In other words, $d(\psi^{-k}(y_n), \psi^{-k}(y)) \leq \delta$ for all $k \geq 0$ and $n \geq N$.  Let $x_n = \pi(y_n)$ for all $n \geq 0$ and also $x = \pi(y)$.  Consequently,
\begin{align*}
    d(\varphi^{-k}(x_n), \varphi^{-k}(x)) &= d(\varphi^{-k}(\pi(y_n)), \varphi^{-k}(\pi(y))) \\
    &= d(\pi(\psi^{-k}(y_n)), \pi(\psi^{-k}(y))) \\
    &< \varepsilon
\end{align*}
for all $k \geq 0$ and $n \geq N$.  Hence $x_n \in X^\text{u}(x, \varepsilon)$ for all $n \geq N$ and this shows $\pi|_{Y^\text{u}(q)}$ is indeed continuous.

Lastly we will show $\pi|_{Y^\text{u}(q)}$ is proper.  Assume $\{y_n\}_{n \geq 0}$ is a sequence in $Y^\text{u}(q)$ such that the sequence $\{x_n\}_{n \geq 0}$, with $x_n = \pi(y_n)$, converges to $x \in X^\text{u}(p)$.  Let $y$ be the unique pre-image of $x$ under $\pi$.  Then by the compactness of $Y$, $\{y_n\}_{n \geq 0}$ has a subsequence $\{y_{n_k}\}_{k \geq 0}$ converging to some $y' \in Y$.  By continuity $\pi(y') = x$, but since $y$ is the unique pre-image of $x$ we must have $y' = y$.  Hence it is actually the case that $y_n$ converges to $y$, and this shows the properness of $\pi|_{Y^\text{u}(q)}$.  Since $Y^\text{u}(q)$ and $X^\text{u}(p)$ are locally compact Hausdorff spaces, properness then implies $\pi|_{Y^\text{u}(q)}$ is a homeomorphism.
\end{proof}

We now show that the Smale space $(Y,\psi)$ in Theorem \ref{theorem:fp-factor-of-smale} can additionally be chosen to be mixing if $(X,\varphi)$ is mixing.  The technique of the proof is essentially the same as \cite[Proposition 4.8]{amini17}.

\begin{lemma}\label{lemma:fp-factor-mixing}
Suppose $(X,\phi)$ is a mixing finitely presented system, $(Y,\psi)$ an irreducible Smale space, and $\pi: Y \to X$ a factor map which is one-to-one almost everywhere.  Then $(Y,\psi)$ is mixing also.
\end{lemma}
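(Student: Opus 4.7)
The plan is to invoke the spectral decomposition of irreducible Smale spaces and then use the mixing of $(X,\phi)$ combined with the residual set of one-to-one points of $\pi$ to derive a contradiction to non-mixing. Recall that for an irreducible Smale space $(Y,\psi)$ there exist non-empty pairwise disjoint clopen sets $Y_0,\ldots,Y_{N-1}$ with $Y=\bigsqcup_{i=0}^{N-1} Y_i$, $\psi(Y_i)=Y_{i+1 \bmod N}$, and $(Y_0,\psi^N)$ mixing; moreover $(Y,\psi)$ is mixing if and only if $N=1$. Assume for contradiction that $N\geq 2$.

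First I would show that each $\pi(Y_i)$ has non-empty interior. Each $\pi(Y_i)$ is compact (hence closed) in the Baire space $X$, and $X=\bigcup_{i=0}^{N-1}\pi(Y_i)$ is a finite union, so at least one $\pi(Y_i)$ has non-empty interior. Using the intertwining relation $\pi\circ\psi=\phi\circ\pi$, one has $\pi(Y_i)=\phi^i(\pi(Y_0))$, and since $\phi$ is a homeomorphism, every $\pi(Y_i)$ has non-empty interior. Set $U_i=\interior(\pi(Y_i))$ and let $G\subseteq X$ be the residual set on which $\pi$ has a unique pre-image; note that $\phi^{-n}(G)$ is also residual for every $n\in\Z$.

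Next I would exploit mixing of $(X,\phi)$ to produce, for every $n$ sufficiently large, a point in $X$ with coherent pre-image information in $Y$. By mixing, there is an $N_0$ such that $\phi^n(U_0)\cap U_1\neq\emptyset$ for every $n\geq N_0$. For each such $n$ the open set $U_0\cap\phi^{-n}(U_1)$ is non-empty, so it meets the residual set $G\cap\phi^{-n}(G)$; choose a point $x$ in this intersection. Since $x\in\pi(Y_0)\cap G$ has a unique pre-image $y$ and some element of $Y_0$ maps to $x$, we must have $y\in Y_0$. Similarly, $\phi^n(x)=\pi(\psi^n(y))\in\pi(Y_1)\cap G$ has unique pre-image $\psi^n(y)$, which must lie in $Y_1$. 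On the other hand $y\in Y_0$ forces $\psi^n(y)\in Y_{n\bmod N}$, so $n\equiv 1 \pmod{N}$.

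This would yield the contradiction: we would have $n\equiv 1\pmod N$ for every $n\geq N_0$, which fails upon comparing $n=N_0$ with $n=N_0+1$ as soon as $N\geq 2$. Hence $N=1$ and $(Y,\psi)$ is mixing. The only non-routine input is the spectral/cyclic decomposition of irreducible Smale spaces, which is classical and is available in \cite{putnamNotes19}; the main conceptual step is to transfer the cyclic obstruction from $Y$ to $X$ using unique pre-images on a residual set, a maneuver that is directly permitted by the one-to-one almost everywhere hypothesis.
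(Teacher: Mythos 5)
Your argument is correct, but it takes a genuinely different route from the paper's. The paper proves the lemma by transferring mixing through a single well-chosen point: it picks a synchronizing periodic point $p$ lying in the dense open set $W$ of Theorem \ref{theorem:fp-factor-of-smale} (so $p$ and $\varphi(p)$ have unique pre-images $q$ and $\psi(q)$), uses Lemma \ref{lemma:fp-factor-pi-restriction-homeo} to identify $\pi^{-1}(X^\text{s}(p))$ with $Y^\text{s}(q)$ and its unstable analogue, invokes Lemma \ref{lem:sync-per-stable-unstable-dense} to see that $X^\text{s}(p)\cap X^\text{u}(\varphi(p))$ and $X^\text{s}(\varphi(p))\cap X^\text{u}(p)$ are non-empty, and then concludes by the criterion of \cite[Lemma 4.7]{amini17} that an irreducible Smale space with $Y^\text{s}(q)\cap Y^\text{u}(\psi(q))\neq\emptyset$ and $Y^\text{s}(\psi(q))\cap Y^\text{u}(q)\neq\emptyset$ is mixing. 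You instead take the cyclic (spectral) decomposition $Y=\bigsqcup_{i=0}^{N-1}Y_i$ of the irreducible Smale space, push the pieces forward, use Baire category to get non-empty interiors $U_i\subseteq\pi(Y_i)$, and then combine mixing of $(X,\varphi)$ with the residual set of unique pre-images to force $n\equiv 1\ (\mathrm{mod}\ N)$ for all large $n$, which kills $N\geq 2$; every step of this checks out. What your route buys: it uses only the literal hypotheses of the lemma --- a residual set of points with unique pre-image --- with no resolving hypothesis, no periodic points, no synchronizing structure, and in particular no appeal to the set $W$ (which the paper's proof quietly imports from Theorem \ref{theorem:fp-factor-of-smale} beyond the stated hypotheses); the only Smale-space input is the classical decomposition theorem, which within this paper's bibliography is better sourced from Putnam's memoir \cite{MR3243636} or Ruelle \cite{ruelle_2004} than from \cite{putnamNotes19}. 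What the paper's route buys: it runs entirely on lemmas already established in the paper (Lemmas \ref{lem:sync-per-stable-unstable-dense}, \ref{lemma:fp-factor-sync-per-unique-pre-image}, \ref{lemma:fp-factor-pi-restriction-homeo}, which are needed again for Lemma \ref{lem:fp-lcu-lcs-dense}), avoids importing the spectral decomposition, and parallels the argument of \cite[Proposition 4.8]{amini17}, which it explicitly credits.
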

\begin{proof}
The set $\Per(Y,\psi)$ is dense in $Y$ by Proposition \ref{theorem:sync-points-dense}. Hence $\pi\parens{\Per(Y,\psi)} \subseteq \Per(X, \varphi)$ is dense in $X$ because $\pi$ is continuous and onto. Let $W$ be as in Theorem \ref{theorem:fp-factor-of-smale}, then since $W \cap X_\sync$ is an open dense set, there exists $p \in \pi\parens{\Per(Y,\psi)} \cap \parens{X_\sync \cap W}$.  Hence, $p$ is periodic, synchronizing, and has unique pre-image $q \in Y$ such that $q$ is also periodic.  Furthermore, observe that $\varphi(p)$ is also periodic, synchronizing, and has the unique pre-image $\psi(q)$.

By Lemma \ref{lemma:fp-factor-pi-restriction-homeo}, since $p$ and $\varphi(p)$ are both periodic with unique pre-image we have
\begin{align*}
    \pi^{-1}(X^\text{s}(p)) &= Y^\text{s}(p) \,, \\
    \pi^{-1}(X^\text{u}(p)) &= Y^\text{u}(p) \,, \\
    \pi^{-1}(X^\text{s}(\varphi(p))) &= Y^\text{s}(\psi(q)) \text{ , and } \\
    \pi^{-1}(X^\text{u}(\varphi(p))) &= Y^\text{u}(\psi(q)) \,.
\end{align*}
Since $(X,\varphi)$ is mixing and $p$ and $\varphi(p)$ are synchronizing periodic points, we know from Lemma \ref{lem:sync-per-stable-unstable-dense} that both $X^\text{s}(p) \cap X^\text{u}(\varphi(p))$ and $X^\text{s}(\varphi(p)) \cap X^\text{u}(p)$ are dense in $X$ and in particular non-empty.  Consequently,
\begin{align*}
    Y^\text{s}(q) \cap Y^\text{u}(\psi(q)) &= \pi^{-1}(X^\text{s}(p)) \cap \pi^{-1}(X^\text{u}(\varphi(p))) = \pi^{-1}(X^\text{s}(p) \cap X^\text{u}(\varphi(p))) \neq \emptyset \text{ , and } \\
    Y^\text{s}(\psi(q)) \cap Y^\text{u}(q) &= \pi^{-1}(X^\text{s}(\varphi(p))) \cap \pi^{-1}(X^\text{u}(p)) = \pi^{-1}(X^\text{s}(\varphi(p)) \cap X^\text{u}(p)) \neq \emptyset \,.
\end{align*}
Since $(Y,\psi)$ is an irreducible Smale space, we have from \cite[Lemma 4.7]{amini17} that the above is equivalent to $(Y,\psi)$ being mixing.
\end{proof}

Finally, we will show the desired density of stable and unstable local conjugacy equivalence classes for the case of a mixing finitely presented system.

\begin{lemma}\label{lem:fp-lcu-lcs-dense}
Let $(X, \varphi)$ be a mixing finitely presented system. There exists $p \in X$ a synchronizing periodic point such that for all $x \in X^\text{u}(p)$ and $y \in X^\text{s}(p)$,
\begin{enumerate}[(i)]
    \item $X^\lcs(x) \cap X^\text{u}(p)$ is dense in $X^\text{u}(p)$, and
    \item $X^\lcu(y) \cap X^\text{s}(p)$ is dense in $X^\text{s}(p)$.
\end{enumerate}
\end{lemma}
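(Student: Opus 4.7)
The plan is to reduce the density statements to their analogues in a mixing Smale space, using the resolving factor maps produced by Theorem \ref{theorem:fp-factor-of-smale}.

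By Theorem \ref{theorem:fp-factor-of-smale} and its s-resolving analogue, there exist factor maps $\pi_u\colon (Y_u,\psi_u)\to (X,\varphi)$ and $\pi_s\colon (Y_s,\psi_s)\to (X,\varphi)$ from irreducible Smale spaces which are respectively u-resolving and s-resolving, each injective on a dense open subset $W_u,W_s\subseteq X$; by Lemma \ref{lemma:fp-factor-mixing} both $(Y_u,\psi_u)$ and $(Y_s,\psi_s)$ are mixing Smale spaces. Since $\Per(X,\varphi)\cap X_\sync$ is dense in $X$ (Corollary \ref{cor:sync-periodic-dense} together with Proposition \ref{theorem:sync-points-dense}) while $W_u\cap W_s$ is open and dense, I can choose a synchronizing periodic point $p\in W_u\cap W_s$. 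Let $q_u\in Y_u$ and $q_s\in Y_s$ denote its unique pre-images under $\pi_u$ and $\pi_s$; both are periodic with the same period as $p$. Lemma \ref{lemma:fp-factor-pi-restriction-homeo} then furnishes homeomorphisms (in the inductive limit topologies) $\pi_u|_{Y_u^\text{u}(q_u)}\colon Y_u^\text{u}(q_u)\to X^\text{u}(p)$ and $\pi_s|_{Y_s^\text{s}(q_s)}\colon Y_s^\text{s}(q_s)\to X^\text{s}(p)$.

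For assertion (i), I fix $x\in X^\text{u}(p)$ and let $y\in Y_u^\text{u}(q_u)$ be its unique lift. In the mixing Smale space $(Y_u,\psi_u)$ every stable equivalence class is dense in $Y_u$, a standard consequence of hyperbolic dynamics. A bracket argument analogous to the one in Lemma \ref{lem:sync-per-stable-unstable-dense} then shows that $Y_u^\text{h}(y)=Y_u^\text{s}(y)\cap Y_u^\text{u}(y)$ is dense in $Y_u^\text{u}(y)=Y_u^\text{u}(q_u)$ in its inductive limit topology: given a basic neighborhood $Y_u^\text{u}(y_0,\delta)$ of any $y_0\in Y_u^\text{u}(q_u)$, pick $z\in Y_u^\text{s}(y)$ close to $y_0$ and form $[z,y_0]\in Y_u^\text{s}(y)\cap Y_u^\text{u}(y_0,\delta)$. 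Since $\pi_u|_{Y_u^\text{u}(q_u)}$ is a homeomorphism, $\pi_u(Y_u^\text{h}(y))$ is dense in $X^\text{u}(p)$. For each $y'\in Y_u^\text{h}(y)$ we have $y\sim_\text{h} y'$ in $Y_u$, so Lemma \ref{lem:smale-h-implies-lc} produces a full local conjugacy $\tilde\gamma\colon Y_u^\text{u}(y,\delta_Y)\to Y_u^\text{u}(y',\delta_Y')$, which in particular is a stable local conjugacy in $Y_u$.

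The main obstacle is transferring $\tilde\gamma$ to a stable local conjugacy in $X$. The candidate $\gamma:=\pi_u\circ\tilde\gamma\circ(\pi_u|_{Y_u^\text{u}(y,\delta_Y)})^{-1}$ is a homeomorphism from $\pi_u(Y_u^\text{u}(y,\delta_Y))$ onto its image because $\pi_u$ is injective on $Y_u^\text{u}(q_u)$, and uniform continuity of $\pi_u$ transfers the forward-asymptotic condition from $\tilde\gamma$ to $\gamma$. The definition of stable local conjugacy in $X$, however, demands that the domain be a metric local unstable set $X^\text{u}(x,\delta_X)$, and the key technical step is to produce such a $\delta_X$ with $X^\text{u}(x,\delta_X)\subseteq\pi_u(Y_u^\text{u}(y,\delta_Y))$. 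I plan to establish this via a uniform u-resolving estimate proved by a compactness and contradiction argument: for every $\delta_Y\in(0,\eta]$ there exists $\delta_X>0$ such that whenever $y_1\in Y_u$ and $w\in Y_u^\text{u}(y_1,\eta)$ satisfy $d_X(\pi_u(w),\pi_u(y_1))\leq\delta_X$, then $d_Y(w,y_1)\leq\delta_Y$. Applying this estimate iterate-by-iterate along the backward orbits of $w\in\pi_u^{-1}(X^\text{u}(x,\delta_X))\cap Y_u^\text{u}(q_u)$ and $y$ (using $w\sim_\text{u} y$ so the local unstable hypothesis holds for all sufficiently late backward iterates, and adapted-metric contraction on $Y_u$ together with the product structure near $q_u$ to handle the finite initial segment) yields the required inclusion. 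Consequently $\pi_u(y')\in X^\lcs(x)\cap X^\text{u}(p)$, which proves (i); the parallel argument with $\pi_s$, $q_s$, and $Y_s^\text{s}(q_s)$ handles (ii).
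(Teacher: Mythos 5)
Your proposal is correct and follows essentially the same route as the paper's proof: pass to the mixing Smale space extension given by Theorem \ref{theorem:fp-factor-of-smale} (and Lemma \ref{lemma:fp-factor-mixing}) via a resolving, almost one-to-one factor map, choose a synchronizing periodic point $p$ with unique pre-image $q$, use the density of $Y^\text{s}(y)\cap Y^\text{u}(q)$ in $Y^\text{u}(q)$ together with Lemma \ref{lem:smale-h-implies-lc} to get stable local conjugacies upstairs, and push them down through the homeomorphism of Lemma \ref{lemma:fp-factor-pi-restriction-homeo}. The only real differences are refinements, both sound: you explicitly pick $p$ with unique pre-image under both the u-resolving and s-resolving maps so one point serves (i) and (ii), and your compactness-based uniform resolving estimate (with the backward-orbit induction) supplies a justification for the inclusion $\pi^{-1}\left(X^\text{u}(x_1,\delta)\right)\subseteq Y^\text{u}(y_1,\delta_1)$, which the paper asserts without argument when choosing $\delta$.
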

\begin{proof}
Let $\pi : Y \to X$ be a factor map as in Theorem \ref{theorem:fp-factor-of-smale} where $(Y,\psi)$ is a mixing Smale space by Lemma \ref{lemma:fp-factor-mixing}.  By Lemma \ref{lemma:fp-factor-sync-per-unique-pre-image} we can find a synchronizing periodic point $p \in X$ such that $p$ has a unique pre-image $q$ under $\pi$.  Moreover, by Lemma \ref{lemma:fp-factor-pi-restriction-homeo}, we have that the restriction $\pi|_{Y^\text{u}(q)} : Y^\text{u}(q) \to X^\text{u}(p)$ is a homeomorphism.  To simplify notation we will denote the restriction $\pi|_{Y^\text{u}(q)}$ as $\pi_q$.  Crucially, we will use the fact that $Y^\text{s}(y)\cap Y^\text{u}(q)$ is dense in $Y^\text{u}(q)$ since $(Y,\psi)$ is a mixing Smale space \cite{ruelle_2004}.

We will show the proof of (i). Consider $y_1, y_2 \in Y^\text{u}(q)$ such that $y_1 \sim_\text{s} y_2$, and denote $x_1 = \pi(y_1)$ and $x_2 = \pi(y_2)$.  We will construct a stable local conjugacy from $x_1$ to $x_2$.  First note that since $y_1 \sim_\text{s} y_2$ and $y_1 \sim_\text{u} q \sim_\text{u} y_2$, we have $y_1 \sim_\text{h} y_2$.  Because $(Y,\psi)$ is a Smale space we can apply Lemma \ref{lem:smale-h-implies-lc}, and the fact that $y_1 \sim_\lc y_2$ implies $y_1 \sim_\lcs y_2$, to obtain a stable local conjugacy $\gamma : Y^\text{u}(y_1, \delta_1) \to Y^\text{u}(y_2, \delta_2)$.

Next we will show that $\gamma$ descends to a stable local conjugacy from $x_1$ to $x_2$.  Let $\delta > 0$ be such that $\pi^{-1}(X^\text{u}(x_1, \delta)) \subseteq Y^\text{u}(y_1, \delta_1)$, then define $\widetilde{\gamma}(z)$ be defined as \[ \widetilde{\gamma}(z) = (\pi \circ \gamma \circ \pi_q^{-1})(z) \] for $z \in X^\text{u}(x_1, \delta)$.  Let $V$ denote the image of $\widetilde{\gamma}$ in $X^\text{u}(p)$, then $\widetilde{\gamma} : X^\text{u}(x_1, \delta) \to V$ is a homeomorphism such that $\widetilde{\gamma}(x_1) = x_2$.  Then we compute
\begin{align*}
    \lim_{n \to \infty} \sup_{z \in X^\text{u}(x_1, \delta)} d(\varphi^n(z), \varphi^n(\widetilde{\gamma}(z))) &= \lim_{n \to \infty} \sup_{z \in X^\text{u}(x_1, \delta)} d(\varphi^n(z), \varphi^n((\pi \circ \gamma \circ \pi_q^{-1})(z))) \\
    &= \lim_{n \to \infty} \sup_{w \in \pi^{-1}(X^\text{u}(x_1, \delta))} d(\varphi^n(\pi(w)), \varphi^n(\pi(\gamma(w)))) \\
    &= \lim_{n \to \infty} \sup_{w \in \pi^{-1}(X^\text{u}(x_1, \delta))} d(\pi(\psi^n(w)), \pi(\psi^n(\gamma(w)))) \\
    &= 0
\end{align*}
Where the last line is by the uniform continuity of $\pi$.  Note that the metric $d$ is the metric on $X$.  Hence $x_1 \sim_\lcs x_2$.

Let $x \in X^\text{u}(p)$ and let $y \in Y^\text{u}(q)$ denote the unique pre-image of $x$ under $\pi$.  Then since $Y^\text{s}(y)\cap Y^\text{u}(q)$ is dense in $Y^\text{u}(q)$, and because $\pi_q$ is a homeomorphism, we have \[ \pi_q\left(Y^\text{s}(y) \cap Y^\text{u}(q)\right) \subseteq X^\lcs(x) \cap X^\text{u}(p) \] is dense in $X^\text{u}(p)$.  This proves (i).
\end{proof}

\subsection{Amenability}
It is an open question whether the homoclinic groupoid of an expansive dynamical system is amenable. In this section, we prove that the groupoids $G^{{\rm lcs}}(X, \varphi, P)$ and $G^{{\rm lcu}}(X, \varphi, P)$ are amenable when $(X, \varphi)$ is mixing and finitely presented. 
\begin{lemma}
Suppose that $(X, \varphi)$ is a mixing finitely presented system, $(Y, \psi)$ is a mixing Smale space, $\pi: Y \rightarrow X$ is almost one-to-one u-resolving factor map, and $P$ is a finite set of synchronizing periodic points for which $\pi$ is one-to-one. Then the map $\pi \times \pi$ maps $G^s(Y, \psi, \pi^{-1}(P))$ to an open subgroupoid of $G^{lcs}(X, \varphi, P)$. 
\end{lemma}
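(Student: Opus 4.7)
My strategy is to verify three things: (a) $\pi\times\pi$ lands in $G^{\lcs}(X,\varphi,P)$, (b) it is a groupoid morphism whose image is a subgroupoid, and (c) this image is open. Set $P' = \pi^{-1}(P)$. Because $\pi$ is u-resolving and one-to-one on $P'$, Lemma \ref{lemma:fp-factor-pi-restriction-homeo} gives that for each $q \in P'$ the restriction $\pi_q := \pi|_{Y^\text{u}(q)}$ is a homeomorphism onto $X^\text{u}(\pi(q))$. Since distinct $q \in P'$ yield distinct $\pi(q) \in P$ with disjoint unstable equivalence classes, these restrictions assemble into a homeomorphism $\pi|_{Y^\text{u}(P')} : Y^\text{u}(P') \to X^\text{u}(P)$ with continuous inverse $\rho$. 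This identification is the key tool throughout.

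For (a), take $(y_1,y_2) \in G^s(Y,\psi,P')$, so $y_1 \sim_\text{s} y_2$ with $y_i \in Y^\text{u}(q_i)$ for some $q_i \in P'$; in particular $\pi(y_i) \in X^\text{u}(P)$. Because $(Y,\psi)$ is a Smale space, Ruelle's bracket construction (as specialized in the proof of Lemma \ref{lem:fp-lcu-lcs-dense}) yields a stable local conjugacy $\gamma : Y^\text{u}(y_1,\delta_1) \to Y^\text{u}(y_2,\delta_2)$ realizing $y_1 \sim_\lcs y_2$. Pick $\delta > 0$ so that $\rho(X^\text{u}(\pi(y_1),\delta)) \subseteq Y^\text{u}(y_1,\delta_1)$, and define $\widetilde{\gamma} = \pi \circ \gamma \circ \rho$ on $X^\text{u}(\pi(y_1),\delta)$. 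This is a homeomorphism onto its image in $X^\text{u}(\pi(y_2))$, and the computation in the proof of Lemma \ref{lem:fp-lcu-lcs-dense}, using $\pi \circ \psi^n = \varphi^n \circ \pi$ and the uniform continuity of $\pi$, shows it is a stable local conjugacy. Thus $(\pi(y_1),\pi(y_2)) \in G^{\lcs}(X,\varphi,P)$.

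For (b), preservation of units and inverses is immediate. For composition, suppose $(y_1,y_2), (y_2',y_3) \in G^s(Y,\psi,P')$ have composable images $(\pi(y_1),\pi(y_2))$ and $(\pi(y_2'),\pi(y_3))$. Then $\pi(y_2) = \pi(y_2')$, and since $\pi$ is injective on $Y^\text{u}(P')$ we get $y_2 = y_2'$; hence $(y_1,y_3) \in G^s(Y,\psi,P')$ maps to the composite. For (c), fix $(x_1,x_2) = (\pi(y_1),\pi(y_2))$ in the image and let $\widetilde{\gamma}$ be as in (a). The basic open neighborhood
\[ N = \{(z,\widetilde{\gamma}(z)) : z \in X^\text{u}(x_1,\delta)\} \]
of $(x_1,x_2)$ in $G^{\lcs}(X,\varphi,P)$ lies in the image, since each such $(z,\widetilde{\gamma}(z))$ is the image under $\pi\times\pi$ of $(\rho(z), \gamma(\rho(z))) \in G^s(Y,\psi,P')$. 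The only nontrivial step is producing $\gamma$ and pushing it down through $\pi$, but this is essentially a reprise of the calculation already carried out in Lemma \ref{lem:fp-lcu-lcs-dense}; everything else is bookkeeping made possible by the homeomorphism $\rho$.
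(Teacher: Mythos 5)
Your proposal is correct and follows essentially the same route as the paper: you use Lemma \ref{lemma:fp-factor-pi-restriction-homeo} to identify the unit spaces $Y^\text{u}(\pi^{-1}(P))$ and $X^\text{u}(P)$ homeomorphically, and you reprise the push-down computation from the proof of Lemma \ref{lem:fp-lcu-lcs-dense} to send the bracket-defined basic sets of $G^s(Y,\psi,\pi^{-1}(P))$ to basic sets of $G^{\lcs}(X,\varphi,P)$, which is exactly how the paper obtains openness. Your explicit verification of the subgroupoid (composition) property is a detail the paper leaves implicit, but it is the same argument.
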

\begin{proof}
It follows from Lemma \ref{lemma:fp-factor-pi-restriction-homeo} that $\pi \times \pi$ maps the unit space of $G^s(Y, \psi, \pi^{-1}(P))$ homeomorphically to the unit space of $G^{lcs}(X, \varphi, P)$. Furthermore, in the proof of Lemma \ref{lem:fp-lcu-lcs-dense} it was shown that $\pi \times \pi$ maps a basic set used to defined the topology on $G^s(Y, \psi, \pi^{-1}(P))$ to a basic set for the topology on $G^{lcs}(X, \varphi, P)$. Thus, $\pi \times \pi$ is an open map.
\end{proof}
We can now prove the main result of this section. The reader less familiar with amenability might find it useful to review \cite{MR3403785} (see in particular Remark 2.5 of \cite{MR3403785}).
\begin{theorem} \label{thm:amenable}
Suppose that $(X, \varphi)$ is a mixing finitely presented system and $P$ is a finite set of synchronizing periodic points. Then the groupoids $G^{{\rm lcs}}(X, \varphi, P)$ and $G^{{\rm lcu}}(X, \varphi, P)$ are amenable. 
\end{theorem}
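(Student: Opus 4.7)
The strategy is to leverage the factor map machinery of Theorem \ref{theorem:fp-factor-of-smale} and Lemma \ref{lemma:fp-factor-mixing} to reduce to the well-known amenability of the stable (and unstable) groupoids of a mixing Smale space, which is a standard result. I will focus on $G^\lcs(X,\varphi,P)$; the argument for $G^\lcu(X,\varphi,P)$ is symmetric, obtained by applying the u-resolving construction to $(X,\varphi^{-1})$, producing an s-resolving extension of $(X,\varphi)$.

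First I would reduce to a convenient choice of $P$. By Lemma \ref{lemma:fp-factor-sync-per-unique-pre-image} there exist finite sets $P'$ of synchronizing periodic points on which a u-resolving factor map $\pi : Y \to X$ from a mixing Smale space $(Y,\psi)$ is one-to-one. The proof of Theorem \ref{theorem:stable-unstable-independent-of-choice-of-P} actually establishes a Morita equivalence of the underlying \'etale groupoids $G^\lcs(X,\varphi,P)$ and $G^\lcs(X,\varphi,P\sqcup P')$; since topological amenability is preserved under Morita equivalence of \'etale groupoids, it suffices to prove amenability after replacing $P$ by $P\sqcup P'$. Thus, without loss of generality, we may assume $\pi$ is injective on $P$.

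Under this assumption, the preceding lemma supplies a continuous open groupoid homomorphism
\[
\Pi \,=\, \pi \times \pi \,:\, G^s(Y,\psi,\pi^{-1}(P)) \,\longrightarrow\, G^\lcs(X,\varphi,P)
\]
whose image is an open subgroupoid of the target. By Lemma \ref{lemma:fp-factor-pi-restriction-homeo}, $\Pi$ restricts to a homeomorphism on unit spaces, so the image carries the full unit space $X^\text{u}(P)$ of $G^\lcs(X,\varphi,P)$, and by Lemma \ref{lemma:FiniteToOne} the map $\Pi$ is finite-to-one. The source groupoid is amenable because $(Y,\psi)$ is a mixing Smale space (the stable groupoids of mixing Smale spaces being amenable by the Putnam--Spielberg result).

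The principal obstacle is the final amenability transfer through $\Pi$. My plan is to invoke the framework of Anantharaman-Delaroche and Renault: a continuous, open, finite-to-one groupoid homomorphism from an amenable \'etale groupoid onto an open subgroupoid of the target which carries the full unit space forces amenability of the target. An alternative route is $C^*$-algebraic, transferring nuclearity of $C^*_r(G^s(Y,\psi,\pi^{-1}(P)))$ along the induced $\ast$-homomorphism to $S(X,\varphi,P) = C^*_r(G^\lcs(X,\varphi,P))$, and then converting nuclearity back to topological amenability (which for Hausdorff \'etale groupoids is equivalent). This last step is the technical heart of the proof; the essential content is that, after the Morita reduction, $G^\lcs(X,\varphi,P)$ sits close enough to the Smale space stable groupoid via $\Pi$ that amenability cannot fail.
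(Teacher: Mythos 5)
Your setup reproduces the paper's reduction almost verbatim: use Theorem \ref{theorem:fp-factor-of-smale} and Lemma \ref{lemma:fp-factor-mixing} to get a mixing Smale space extension, use Morita invariance (Theorem \ref{theorem:stable-unstable-independent-of-choice-of-P}) together with Lemma \ref{lemma:fp-factor-sync-per-unique-pre-image} to arrange that $\pi$ is one-to-one on $P$, note that $\pi\times\pi$ carries $G^s(Y,\psi,\pi^{-1}(P))$ onto an open subgroupoid of $G^\lcs(X,\varphi,P)$ containing the full unit space, and quote \cite{putnam99} for amenability upstairs. The gap is exactly the step you flag as ``the technical heart'': the transfer principle you propose to invoke --- a continuous, open, finite-to-one homomorphism from an amenable \'etale groupoid onto an open subgroupoid containing the unit space forces amenability of the ambient groupoid --- is not an available theorem, and it is not harmless hand-waving. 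Amenability passes \emph{to} open subgroupoids, not \emph{from} them (the unit space is always an amenable open subgroupoid of anything), so the whole burden is to control the arrows of $G^\lcs(X,\varphi,P)$ that are not in the image of $\pi\times\pi$, and for this you give no argument. Your $C^*$-algebraic alternative runs into the same wall: $\pi\times\pi$ is not surjective onto $G^\lcs(X,\varphi,P)$ (a stable local conjugacy downstairs only lifts to stable equivalence with one of the at most $M$ preimage classes from Lemma \ref{lemma:FiniteToOne}), so what you get is an inclusion of the reduced algebra of the image subgroupoid into $S(X,\varphi,P)$, and nuclearity does not pass from a subalgebra to the ambient algebra.

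The paper closes precisely this gap by changing categories. By \cite{MR3403785}, topological amenability of these \'etale groupoids is equivalent to Borel amenability (the basic open sets are Borel for the subspace topology, so the Borel groupoid is unchanged). At the Borel level the finite-to-one transfer is supplied by Proposition 2.9(vii) of \cite{MR1900547} on countable Borel equivalence relations, whose hypotheses are exactly the open-image lemma plus item (2) of Lemma \ref{lemma:FiniteToOne} (each $\pi^{-1}(X^\text{s}(x))$ is a union of at most $M$ stable classes in $Y$). This gives Borel amenability of $G^\lcs(X,\varphi,P)$, and \cite{MR3403785} converts it back into topological amenability. To complete your proof you would need either to reproduce this Borel-theoretic detour or to prove your topological transfer principle directly; as stated it is not in the Anantharaman-Delaroche--Renault framework.
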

\begin{proof}
To begin, we recall that $G^{{\rm lcs}}(X, \varphi, P)$ does not have the subspace topology as subset of $X^u(P)\times X^u(P)$ with the product topology. However, as Borel groupoids, we have that $G^{{\rm lcs}}(X, \varphi, P)$ with the usual topology is the same as $G^{{\rm lcs}}(X, \varphi, P)$ with the subspace topology. This follows from the fact that the basics sets used to defined the topology on $G^{{\rm lcs}}(X, \varphi, P)$ are Borel sets with respect to the subspace topology.

We will only consider  $G^{{\rm lcs}}(X, \varphi, P)$ in detail. By Theorem \ref{theorem:fp-factor-of-smale}, there exists $(Y, \psi)$ a mixing Smale space and $\pi: Y \rightarrow X$ almost one-to-one u-resolving factor map. Since different choices of synchronizing periodic points only affect the groupoids up to Morita equivalence, we can use Lemma \ref{lemma:fp-factor-sync-per-unique-pre-image} to ensure that we take $P$ that satisfies the conditions in the previous lemma (here we note that if $\pi$ is one-to-one for a point, then it is one-to-one for that point's entire orbit).

By \cite[Theorem 1.1]{putnam99}, $G^s(Y, \psi, \pi^{-1}(P))$ is amenable and hence by the main result of \cite{MR3403785}, $G^s(Y, \psi, \pi^{-1}(P))$ is Borel amenable. The previous lemma and item (2) of Lemma \ref{lemma:FiniteToOne} are the hypotheses of Proposition 2.9 Part (vii) in \cite{MR1900547}. It follows from this proposition that $G^{{\rm lcs}}(X, \varphi, P)$ is Borel amenable and again using the main result of \cite{MR3403785} it follows that $G^\lcs(X, \psi, P)$ is amenable. 
\end{proof}

\subsection{Summary of Results}

\begin{theorem}\label{th:fp-main-results}
Suppose $(X,\varphi)$ is a mixing finitely presented system and $P \subseteq X$ a finite set of synchronizing periodic points. Then, the groupoids $G^{lc}_{\sync}(X, \varphi)$, $G^\lcs(X,\varphi,P)$, and $G^\lcu(X,\varphi,P)$ are each amenable and \[ 0 \longrightarrow \I_\sync(X,\varphi) \longrightarrow A(X,\varphi) \longrightarrow A(X,\varphi)/\I_\sync(X, \varphi) \longrightarrow 0 \] is an exact sequence of $C^\ast$-algebras, where
\begin{enumerate}[(i)]
    \item $\I_\sync(X,\varphi)$ is Morita equivalent to $S(X,\varphi,P) \otimes U(X,\varphi,P)$ and
    \item $\I_\sync(X,\varphi)$, $S(X,\varphi,P)$, and $U(X,\varphi,P)$ are all simple.
\end{enumerate}
\end{theorem}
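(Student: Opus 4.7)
The plan is to assemble this theorem from results already developed in the paper. First, the short exact sequence of $C^\ast$-algebras is immediate from Proposition \ref{theorem:sync-ideal}, which already exhibits $\I_\sync(X,\varphi)$ as a (closed, two-sided) ideal of $A(X,\varphi)$. Amenability of $G^\lcs(X,\varphi,P)$ and $G^\lcu(X,\varphi,P)$ is exactly the content of Theorem \ref{thm:amenable}, so no additional work is required for those claims.

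For the Morita equivalence in (i), I would first apply Lemma \ref{lem:fp-lcu-lcs-dense} to obtain a synchronizing periodic point $p$ at which the density hypotheses of Theorem \ref{th:sync-morita-equivalence} hold. Theorem \ref{th:sync-morita-equivalence} then yields that $\I_\sync(X,\varphi)$ is Morita equivalent to $S(X,\varphi,\{p\}) \otimes U(X,\varphi,\{p\})$. To promote this to arbitrary finite $P$, I would invoke Theorem \ref{theorem:stable-unstable-independent-of-choice-of-P}, which gives a Morita equivalence between $S(X,\varphi,\{p\})$ and $S(X,\varphi,P)$, and likewise for the unstable algebras. Amenability of $G^\lc_\sync(X,\varphi)$ then follows from the Morita equivalence itself: the space $Z$ constructed in the proof of Theorem \ref{th:sync-morita-equivalence} is a $(G^\lcs \times G^\lcu, G^\lc_\sync)$-equivalence of \'etale groupoids, and amenability of \'etale groupoids is preserved under groupoid equivalence and finite products, so it transfers from $G^\lcs \times G^\lcu$ to $G^\lc_\sync$.

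For the simplicity assertions in (ii), I would invoke the standard criterion that the reduced $C^\ast$-algebra of a second countable Hausdorff \'etale amenable minimal principal groupoid is simple. The groupoids $G^\lcs(X,\varphi,P)$ and $G^\lcu(X,\varphi,P)$ are principal because, viewed as subsets of the relevant product spaces, the isotropy at each unit reduces to the identity arrow. Minimality of $G^\lcs(X,\varphi,\{p\})$ is precisely the density claim of Lemma \ref{lem:fp-lcu-lcs-dense}(i), which says every orbit $X^\lcs(x) \cap X^\text{u}(p)$ is dense in the unit space $X^\text{u}(p)$; minimality for general $P$ transfers via the Morita equivalence of Theorem \ref{theorem:stable-unstable-independent-of-choice-of-P}. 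Hence $S(X,\varphi,P)$ and $U(X,\varphi,P)$ are simple. Simplicity of $\I_\sync(X,\varphi)$ then follows from (i): both $S$ and $U$ are nuclear since their groupoids are amenable, so the tensor product $S \otimes U$ is unambiguous and simple, and simplicity passes through Morita equivalence.

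The main obstacle I anticipate is ensuring that the formal ingredients line up precisely. The groupoids in question carry the topology generated by local conjugacies rather than the subspace topology from $X \times X$; minimality must be stated with respect to the inductive limit topologies on $X^\text{u}(P)$ and $X^\text{s}(P)$ from Section \ref{sec:global-stable-unstable-sets}; and one must verify that the groupoid equivalence of Theorem \ref{th:sync-morita-equivalence} satisfies the technical hypotheses (properness, second countability, Hausdorffness) required both to transport amenability through equivalence and to invoke the simplicity criterion. None of these is expected to present genuine difficulty, but each warrants careful bookkeeping to avoid subtle errors.
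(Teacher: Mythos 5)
Your proposal is correct and follows essentially the same route as the paper's proof: Lemma \ref{lem:fp-lcu-lcs-dense} combined with Theorem \ref{th:sync-morita-equivalence} gives the Morita equivalence for a single synchronizing periodic point, Theorem \ref{theorem:stable-unstable-independent-of-choice-of-P} promotes it to arbitrary finite $P$, amenability of $G^\lc_\sync(X,\varphi)$ is deduced from the equivalence together with Theorem \ref{thm:amenable}, and simplicity of $S(X,\varphi,P)$ and $U(X,\varphi,P)$ comes from the standard simplicity criterion for minimal \'etale groupoids (the paper cites \cite[Proposition 4.3.7]{sims2018etale}), with simplicity of $\I_\sync(X,\varphi)$ then passing through the Morita equivalence in (i). No substantive differences from the paper's argument.
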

\begin{proof}
Combining Theorem \ref{th:sync-morita-equivalence} and Lemma \ref{lem:fp-lcu-lcs-dense}, we have that there exists $p\in X$ a synchronizing periodic point such that $\I_\sync(X,\varphi)$ is Morita equivalent to $S(X,\varphi, p) \otimes U(X, \varphi, p)$. The Morita equivalence in (i) then follows from Theorem \ref{theorem:stable-unstable-independent-of-choice-of-P}. This also implies that $G^{lc}_{\sync}(X, \varphi)$ is amenable since $G^\lcs(X,\varphi,P)$, and $G^\lcu(X,\varphi,P)$ are amenable by Theorem \ref{thm:amenable}. 

From Lemma \ref{lem:fp-lcu-lcs-dense} and \cite[Proposition 4.3.7]{sims2018etale}, it follows that $S(X,\varphi,P)$ and $U(X,\varphi,P)$ are both simple.  It then follows from (i) that $\I_\sync(X,\varphi)$ is also simple. 

\end{proof}


It is not presently known if $G^\lc(X,\varphi)$ is amenable even when $(X, \varphi)$ is a finitely presented system. However, there are no amenability issues when $(X, \varphi)$ is mixing, finitely presented, and the set $X \setminus X_\sync$ is finite. We have the following in this situation.

\begin{theorem}\label{th:fp-main-results-finite}
Suppose $(X,\varphi)$ is a mixing finitely presented system where $X \setminus X_\sync$ is finite and $P \subseteq X$ a finite set of synchronizing periodic points. Then the groupoids $G^\lc(X,\varphi)$, $G^{lc}_{\sync}(X, \varphi)$, $G^\lcs(X,\varphi,P)$, and $G^\lcu(X,\varphi,P)$ are each amenable. Moreover, 
 \[ 0 \longrightarrow \I_\sync(X,\varphi) \longrightarrow A(X,\varphi) \longrightarrow \mathbb{C}^{|X \setminus X_\sync|} \longrightarrow 0 \] 
 is an exact sequence of $C^\ast$-algebras, where (as in the previous theorem)
\begin{enumerate}[(i)]
    \item $\I_\sync(X,\varphi)$ is Morita equivalent to $S(X,\varphi,P) \otimes U(X,\varphi,P)$ and
    \item $\I_\sync(X,\varphi)$, $S(X,\varphi,P)$, and $U(X,\varphi,P)$ are all simple
\end{enumerate}
\end{theorem}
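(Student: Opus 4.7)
The plan is to leverage the new finite-orbit hypothesis on $X \setminus X_\sync$ to reduce this theorem to Theorem \ref{th:fp-main-results} plus a short extension argument. First I would observe that $X \setminus X_\sync$ is $\varphi$-invariant (Lemma \ref{lemma:sync-invariant-under-map}), and being finite it must consist entirely of periodic points. Moreover, since $X_\sync$ is saturated under $\sim_\lc$ by Proposition \ref{prop:lc-preserves-sync}, the complement $X \setminus X_\sync$ is a closed invariant subset of the unit space of $G^\lc(X,\varphi)$, and $X_\sync$ is the complementary open invariant subset.

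Next I would analyze the restriction $G^\lc(X,\varphi)|_{X \setminus X_\sync}$. If $p,q \in X \setminus X_\sync$ and $p \sim_\lc q$, then $p \sim_\text{h} q$, and Lemma \ref{lem:periodic-h-implies-equals} forces $p = q$. Thus the local conjugacy class of each non-synchronizing periodic point is a singleton, so the restriction is the trivial groupoid on the finite set $X \setminus X_\sync$. Its reduced $C^\ast$-algebra is therefore $\mathbb{C}^{|X \setminus X_\sync|}$.

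To establish amenability of $G^\lc(X,\varphi)$ I would invoke the extension principle for amenability of \'etale groupoids (Anantharaman-Delaroche--Renault): if $U \subseteq G^{(0)}$ is open invariant with closed complement $F$ and both $G|_U$ and $G|_F$ are amenable, then so is $G$. Here $G^\lc(X,\varphi)|_{X_\sync} = G^\lc_\sync(X,\varphi)$ is amenable by Theorem \ref{th:fp-main-results}, and the restriction to the finite set $X \setminus X_\sync$ is trivially amenable. The amenability of $G^{lc}_\sync(X,\varphi)$, $G^\lcs(X,\varphi,P)$, and $G^\lcu(X,\varphi,P)$ is inherited verbatim from Theorem \ref{th:fp-main-results}.

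With amenability in hand, the exact sequence follows from the standard short exact sequence for the reduced $C^\ast$-algebra of an \'etale groupoid associated to an open invariant subset of its unit space: the ideal is precisely $\I_\sync(X,\varphi) = C^\ast_r(G^\lc_\sync(X,\varphi))$ and the quotient is $C^\ast_r(G^\lc(X,\varphi)|_{X \setminus X_\sync}) \cong \mathbb{C}^{|X \setminus X_\sync|}$. Parts (i) and (ii) are then immediate consequences of Theorem \ref{th:fp-main-results}. The main technical point to verify carefully is exactness in the middle of the reduced sequence, which is precisely where amenability of $G^\lc(X,\varphi)$ is needed; every other step is either an elementary observation about finite invariant sets or a direct citation of Theorem \ref{th:fp-main-results}.
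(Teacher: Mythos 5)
Your proposal is correct, and its overall skeleton is the same as the paper's: split the unit space into the open invariant set $X_\sync$ and its finite closed complement, note that the finite invariant complement consists of periodic points so that Lemma \ref{lem:periodic-h-implies-equals} forces the local conjugacy classes there to be singletons (hence the restricted groupoid is trivial and its $C^\ast$-algebra is $\mathbb{C}^{|X\setminus X_\sync|}$), and quote Theorem \ref{th:fp-main-results} for parts (i), (ii) and for the amenability of $G^{lc}_{\sync}(X,\varphi)$, $G^\lcs(X,\varphi,P)$, $G^\lcu(X,\varphi,P)$. Where you take a genuinely different route is the amenability of $G^\lc(X,\varphi)$ itself: you invoke the groupoid-level permanence of topological amenability under extensions (Anantharaman-Delaroche--Renault: if $G|_U$ and $G|_F$ are amenable for an open invariant $U$ with closed complement $F$, then $G$ is amenable), whereas the paper argues at the $C^\ast$-algebra level, using \cite[Proposition 4.3.2]{sims2018etale} to realize the \emph{full} groupoid $C^\ast$-algebra as an extension of a finite-dimensional algebra by the nuclear ideal $\I_\sync(X,\varphi)$, concluding nuclearity and hence amenability via \cite[Theorem 4.1.5]{sims2018etale}. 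Your route is more direct at the groupoid level and makes explicit exactly where amenability is needed (exactness in the middle of the reduced sequence), but it relies on the ADR extension theorem, which lies outside the paper's citation framework; the paper's route stays within nuclearity permanence and the amenability--nuclearity equivalence already cited, at the cost of a detour through the full algebra. In either case, once amenability of $G^\lc(X,\varphi)$ is established, the full and reduced algebras coincide and the displayed sequence is exact, so both arguments are complete.
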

\begin{proof}
Based on the previous theorem, we need to show that $G^\lc(X, \varphi)$ is amenable and that $A(X,\varphi)/\I_\sync(X, \varphi) \cong \mathbb{C}^{|X \setminus X_\sync|}$. 

For the first of these statements, using \cite[Proposition 4.3.2]{sims2018etale}, the full groupoid $C^*$-algebra of $G^\lc(X, \varphi)$ fits within a short exact sequence with the ideal $\I_\sync(X,\varphi)$ (where we have used the fact the groupoid restricted to the synchronizing points is amenable). The quotient by this ideal is finite dimensional because there are finitely many non-synchronizing points. In particular, both $\I_\sync(X, \varphi)$ and this quotient algebras are nuclear so the full groupoid $C^*$-algebra of $G^\lc(X, \varphi)$ is also nuclear. It follows that $G^\lc(X, \varphi)$ is amenable, see \cite[Theorem 4.1.5]{sims2018etale}.

We now prove the second part. By \cite[Propsition 4.3.2]{sims12}, $A(X,\varphi)/\I_\sync(X, \varphi)$ is the reduced $C^\ast$-algebra of the local conjugacy groupoid restricted to $X \setminus X_\sync$.  Hence, $A(X,\varphi)/\I_\sync(X, \varphi)$ is a finite dimensional $C^\ast$-algebra. Moreover, since $X \setminus X_\sync$ is finite, every $q \in X \setminus X_\sync$ must be a periodic point so Lemma \ref{lem:periodic-h-implies-equals} implies that if $p \sim_\lc q$ with $p$, $q$ in $X \setminus X_\sync$, then $p = q$. It follows that \[ A(X,\varphi)/\I_\sync(X, \varphi) \cong \mathbb{C}^{|X \setminus X_\sync|} \,. \]
\end{proof}
\begin{example}
Let $(X_\ev,\sigma)$ denote the even shift. It is not hard to show that in the even shift there is only one non-synchronizing element, namely the sequence of all zeros, see \cite[Example 3.1.1]{AndrewPhDThesis} for details. Furthermore the even shift is a sofic shift, so in particular it is finitely presented. The previous theorem implies that all the relevant groupoids are amenable (this also follows from the fact that the relevant groupoids are AF). Hence, from Theorem \ref{th:fp-main-results-finite} we have the exact sequence \[ 0 \longrightarrow \I_\sync(X_\ev, \sigma) \longrightarrow A(X_\ev,\sigma) \longrightarrow \C \longrightarrow 0 \,. \]
\end{example}

\begin{example}
Recall the classification of expansive homeomorphisms of orientable surfaces discussed in Theorem \ref{thm:surfaces}.  Let $(M,\varphi)$ be an expansive homeomorphism of an orientable surface. 

 If $M$ is the 2-torus, them $(M,\varphi)$ is a Smale space.  Hence $M_\sync = M$ and $\I_\sync(M, \varphi) = A(M,\varphi)$.

If the genus of $M$ is larger than one, then by \cite[Lemma 4.9]{artigue2008local} (also see \cite{Lewowicz08}), $M \setminus M_\sync$ is finite, say $|M \setminus M_\sync| = n$. Then by Theorem \ref{th:fp-main-results-finite}, all the relevant groupoids are amenable and we have the exact sequence \[ 0 \longrightarrow \I_\sync(M,\varphi) \longrightarrow A(M,\varphi) \longrightarrow \C^n \longrightarrow 0 \,. \]
\end{example}


\end{document}